\mathchardef\ordinarycolon\mathcode`\:
\def\vcentcolon{\mathrel{\mathop\ordinarycolon}}
\theoremstyle{plain}
\newtheorem{theorem}{Theorem}[section]
\newtheorem{lemma}[theorem]{Lemma}
\newtheorem{proposition}[theorem]{Proposition}
\theoremstyle{definition}
\newtheorem{definition}[theorem]{Definition}
\newtheorem{example}[theorem]{Example}
\newtheorem{notation}[theorem]{Notation}
\newtheorem{remark}[theorem]{Remark}
\newenvironment{mylist}%
{\begin{list}{}%
{\leftmargin 3.5em\labelwidth 3em\rightmargin 1em%
\topsep 0.5ex\itemsep 0.25ex}}%
{\end{list}}
{\begin{list}{}{}}{\end{list}}
\let\origthebibliography=\thebibliography
\def\thebibliography{\renewcommand{\section}[2]{}\origthebibliography}
\newcommand{\alg}{\mathsf{A}}
\newcommand{\blg}{\mathsf{B}}
\newcommand{\words}{\mathsf{W}}
\newcommand{\cuntz}{\mathcal{O}}
\newcommand{\algten}{\mathbin{\underline{\otimes}}}
\newcommand{\bop}[2]%
{\ifthenelse{\equal{#2}{}}{\bopp( #1 )}{\bopp( #1; #2 )}}
\newcommand{\bopp}{B}
\newcommand{\bra}[1]{\langle#1|}
\newcommand{\bt}{\mathbf{t}}
\newcommand{\bw}{\mathbf{w}}
\newcommand{\cb}{\text{cb}}
\newcommand{\comp}{\mathbin{\circ}}
\newcommand{\cP}{\mathcal{P}}
\newcommand{\cQ}{\mathcal{Q}}
\newcommand{\dom}{\mathop{\mathrm{dom}}}
\newcommand{\dyad}[2]{\ket{#1}\bra{#2}}
\newcommand{\elltwo}{L^2( \R_+; \mul )}
\newcommand{\evec}[1]{\evecc(#1)}
\newcommand{\evecc}{\varepsilon}
\newcommand{\evecs}{\mathcal{E}}
\newcommand{\finsubset}{\subset\subset}
\newcommand{\fock}{\mathcal{F}}
\newcommand{\hilb}{\mathsf{H}}
\newcommand{\hilc}{\mathsf{K}}
\newcommand{\hlf}{\mbox{$\frac12$}}
\newcommand{\id}{\mathrm{id}}
\newcommand{\im}{\mathop{\mathrm{im}}}
\newcommand{\ini}{\mathsf{h}}
\newcommand{\ket}[1]{|#1\rangle}
\newcommand{\matten}{\mathbin{\otimes_\mathrm{m}}}
\newcommand{\mmul}{{\wh{\mul}}}
\newcommand{\mul}{\mathsf{k}}
\newcommand{\nvec}[1]{\nvecc(#1)}
\newcommand{\nvecc}{\varpi}
\newcommand{\opsp}{\mathsf{V}}
\newcommand{\opsq}{\mathsf{W}}
\newcommand{\pw}{p_w}
\newcommand{\rd}{\mathrm{d}}
\newcommand{\std}{\,\rd}
\newcommand{\step}{L^{\mathrm{step}}( \R_+; \tot )}
\newcommand{\tfn}[1]{\mathbbm{1}_{#1}}
\newcommand{\tot}{\mathsf{T}}
\newcommand{\uwkten}{\mathbin{\overline{\otimes}}}
\newcommand{\wh}[1]{\widehat{#1}}
\newcommand{\wt}[1]{\widetilde{#1}}
\newcommand{\zp}{{}^z p}
\newcommand{\C}{\mathbb{C}}
\newcommand{\I}{\mathrm{i}}
\newcommand{\N}{\mathbb{N}}
\newcommand{\R}{\mathbb{R}}
\newcommand{\Q}{\mathbb{Q}}
\newcommand{\T}{\mathbb{T}}
\newcommand{\Z}{\mathbb{Z}}
\renewcommand{\ge}{\geqslant}
\renewcommand{\le}{\leqslant}
\newcommand{\tu}[1]{\textup{#1}}
\newcommand{\tinymaths}[1]{\mbox{\small $#1$}}
\DeclareMathOperator{\lin}{lin}
\numberwithin{equation}{section}
\begin{document}

\begin{center}
{\LARGE Feynman--Kac perturbation of $C^*$~quantum stochastic flows}
\begin{multicols}{2}
{\large Alexander C.~R.~Belton}\\[0.5ex]
{\small School of Engineering, Computing and Mathematics\\
University of Plymouth, United Kingdom\\[0.5ex]
\textsf{alexander.belton@plymouth.ac.uk}}
\columnbreak

{\large Stephen J.~Wills}\\[0.5ex]
{\small School of Mathematical Sciences\\
University College Cork, Ireland\\[0.5ex]
\textsf{s.wills@ucc.ie}}
\end{multicols}
{\small 18th April 2024}

\vspace{1ex}
\emph{Dedicated to the memory of K.~R.~Parthasarathy, master of
probability\\and a great inspiration to us both}
\end{center}

\begin{abstract}\noindent
The method of Feynman--Kac perturbation of quantum stochastic
processes has a long pedigree, with the theory usually developed
within the framework of processes on von~Neumann algebras. In this
work, the theory of operator spaces is exploited to enable a
broadening of the scope to flows on $C^*$~algebras. Although the
hypotheses that need to be verified in this general setting may seem
numerous, we provide auxiliary results that enable this to be
simplified in many of the cases which arise in practice. A wide
variety of examples is provided by way of illustration.
\end{abstract}

\section{Introduction}

Very early in the study of quantum stochastic processes on operator
algebras, it was realised in pioneering work of Accardi \cite{Acc78}
that analogues of the method of Feynman--Kac perturbation in
classical probability could be used to perturb Markov semigroups with
cocycles to give new semigroups, giving meaning to a
formal sum of generators.

The creation of quantum stochastic calculus in the early 1980s gave a
means of constructing such a cocycle $j$ on a $*$-algebra
$\alg \subseteq \bop{\ini}{}$ by solving the quantum stochastic
differential equation
\begin{equation}\label{eqn:EHeqn}
j_0( x ) = x \otimes I_\fock \qquad \text{and} \qquad %
\rd j_t( x ) = \wt{\jmath}_t\bigl( \phi( x ) \bigr) \std \Lambda_t %
\qquad ( x \in \alg_0 \subseteq \alg ),
\end{equation}
the cocycle in this case being used to perturb the ampliated CCR flow
$\sigma$ on Boson Fock space over $\elltwo$. Here $\ini$ and $\mul$
are known as the initial space and multiplicity space, respectively,
and $\alg_0$ is a dense, unital $*$-subalgebra of $\alg$. Solving this QSDE
is possible for any completely bounded generator $\phi$ \cite{LiW01}, and
this gives a cocycle in a generalised sense, but for many purposes it
is desirable that $j$ be $*$-homomorphic. Conversely, a completely
positive and contractive cocycle $j$ on a $C^*$ algebra $\alg$ that
satisfies the additional continuity requirement of being
\emph{Markov regular} or \emph{elementary} necessarily
satisfies~(\ref{eqn:EHeqn}), in which case the stochastic generator
$\phi$ must be completely bounded \cite{LiW00a, LiW00b}; see also
\cite[Theorem~6.4]{LiW21}.

Consequently it has become a folklore result that solutions
of the QSDE~(\ref{eqn:EHeqn}) and cocycles should be essentially the same
thing. However, as soon as one loosens the restrictive continuity or
boundedness assumptions, the correspondence between solutions
of~(\ref{eqn:EHeqn}) and cocycles is a lot less clear, especially
because of the many difficulties inherent in solving~(\ref{eqn:EHeqn})
for an unbounded generator~$\phi$. Our previous work \cite{BeW14} gave a method for
solving~(\ref{eqn:EHeqn}) on a $C^*$ algebra for certain unbounded stochastic
	generators, with the solution being $*$-homomorphic and a cocycle. This
involved two assumptions: a type of domain invariance for
$\phi$ and growth estimates for the iterates of $\phi$ whose
existence follows from the first assumption. In that paper we gave a
range of interesting examples where these assumptions held, but these
assumptions are obviously still somewhat restrictive.

In this paper we now follow the work of Evans and Hudson \cite{EvH90}
for perturbing a cocycle $j$, called the free flow, of the CCR flow
$\sigma$ to give a new cocycle $k$ for $\sigma$. This involves finding
bounded solutions to the multiplier equation, which is the following
operator-valued QSDE with time-dependent coefficients:
\begin{equation}\label{eqn:multeq}
X_0 = I_{\ini \otimes \fock} \qquad \text{and} \qquad %
\rd X_t = \wt{\jmath}_t( F ) \wt{X}_t \std \Lambda_t.
\end{equation}
The multiplier generator $F \in \alg \otimes \bop{\mmul}{}$, where
$\mmul := \C \oplus \mul$. The cocycle $k$ is then given by setting
$k_t( x ) := X^*_t j_t( x ) X_t$. The process obtained by
conjugation with $X$ is, at least formally, a cocycle of the
semigroup $J = (\wh{\jmath}_t \circ \sigma_t)_{t \in \R_+}$; see
Section~\ref{sec:qsprocs} for full details.

Evans and Hudson worked with bounded $\phi$ for an arbitrary algebra
$\alg$ and with finite-dimensional multiplicity space $\mul$. Their work was extended to
the case where $\mul$ is separable and infinite dimensional, $\alg$ is
a von Neumann algebra and $\phi$ is completely bounded in \cite{DaS92}
and \cite{GLW01}. A more thorough analysis was later given in
\cite{BLS12}, \cite{BLS13} and \cite{BeW14}, where $\alg$ is still a
von Neumann algebra and either $\phi$ is completely bounded or $\ini$
and $\mul$ are separable. This included an algebraic characterisation
of solutions to~(\ref{eqn:multeq}).

Here, we move to the broader setting of $C^*$~algebras. The technical
difficulties faced are much greater than before. Above, we were vague
about the nature of the tensor-product symbol. For those prior works
where $\alg$ is a von Neumann algebra it will be the von Neumann tensor
product and all bounded maps will be required to be normal. For processes on
$C^*$ algebras this must be modified; we have to use matrix-space
tensor products such as $\alg \matten \bop{ \mmul }{}$, and these need
not produce algebras \cite{LiW01}. Moreover, the matrix-space liftings of
$*$-homomorphisms such as $j_t \matten \id_{\bop{\mmul}{}}$ need not
preserve products. Thus much greater care is needed as, for example, 
checking equalities for simple tensors and extending by linearity and
continuity will no longer suffice.

This paper is structured as follows. Section~\ref{sec:liftings}
contains the necessary definitions and results concerning matrix
spaces. Section~\ref{sec:qsprocs} introduces and gives general results
about quantum stochastic processes, cocycles and the
QSDE~(\ref{eqn:EHeqn}). Here we work with processes on a general
operator space~$\opsp$; this point of view was used to unify several
results about various forms of cocycles in \cite{LiW14} and
\cite{LiW21}. The key result is Theorem~\ref{thm:intcocycle} which
gives conditions under which weak solutions of~(\ref{eqn:EHeqn}) are
actually cocycles. Section~\ref{sec:FK} contains our main results:
after introducing the notion of a free flow $j$, now on a $C^*$
algebra, and solving the corresponding multiplier equation, we
establish the effects of Feynman--Kac perturbation. We have given the
result in a very general form, necessitating a long list of hypotheses
regarding multiplicativity of liftings and measurability of processes,
but also provide a number of auxiliary results that can be used to
simplify matters greatly under conditions that frequently arise in
practice. In Section~\ref{sec:egs}, these results are applied to a
wide variety of examples. Our previous work on the quantum exclusion
process from \cite{BeW14} is extended; there we obtained the growth
estimates by assuming a symmetry condition on the amplitudes, which
can now be relaxed by means of the perturbation techniques of this
paper. (An alternative construction of exclusion processes
has been given in \cite{LiW21} by making use of the semigroup
characterisation of completely positive and contractive cocycles given
in \cite{LiW14} that generalised earlier work of Accardi and Kozyrev
\cite{AcK01}.) Similarly our previous work on flows on universal $C^*$
algebras from \cite{BeW14} is extended; Feynman--Kac perturbations
had already been used in this context for flows on the non-commutative
torus in \cite{CGS03}. Here we show that those techniques apply more
generally for other algebras such as the Cuntz algebras and the
non-commutative spheres of Banica \cite{Ban15}.

\subsection{Conventions and notation}

The indicator function of a set $A$ is denoted $1_A$ (with its domain being clear from the context)
whereas $1_\alg$ is the multiplicative identity for the unital $C^*$~algebra $\alg$
and $\tfn{P}$ equals $1$ if the proposition $P$ is true and $0$ if it is false.
All vector spaces have complex scalar
field; all inner products are linear in their second argument. The
identity operator on a vector space~$X$ is denoted $I_X$ whereas the
identity operator on an operator space~$\opsp$ is denoted
$\id_\opsp$. The Banach algebra of bounded operators on a Banach space~$X$
is denoted~$\bop{X}{}$; the $C^*$~algebra of $n \times n$ matrices
with entries from a $C^*$~algebra~$\alg$ is denoted~$M_n( \alg )$.
Algebraic, spatial and ultraweak tensor products are denoted
$\algten$, $\otimes$ and~$\uwkten$, respectively. The sets of
non-negative real numbers and integers are
denoted~$\R_+ := [ 0, \infty )$ and~$\Z_+ := \{ 0, 1, 2, \ldots \}$;
the set of natural numbers is denoted $\N := \{ 1, 2, 3, \ldots \}$.

\section{Matrix spaces and liftings}\label{sec:liftings}

\begin{definition}
Given Hilbert spaces $\ini$ and $\hilb$ and a vector $z \in \hilb$, let $E_z \in
\bop{\ini}{\ini \otimes \hilb}$ be such that $E_z u = u \otimes z$ for
all $u \in \ini$ and let
$E^z := E_z^* \in \bop{\ini \otimes \hilb}{\ini}$ be its adjoint. Using
Dirac notation, we may write $E_z = I_\ini \otimes \ket{z}$ and
$E^z = I_\ini \otimes \bra{z}$.
\end{definition}

\begin{definition}
Let $\opsp \subseteq \bop{\ini}{}$ be a concrete operator space, that
is, a norm-closed linear subspace of $\bop{\ini}{}$. Given any Hilbert
space $\hilb$, the \emph{matrix space over $\opsp$},
\[
\opsp \matten \bop{\hilb}{} := %
\{ T \in \bop{\ini \otimes \hilb}{} : E^z T E_w \in \opsp %
\text{ for all } z, w \in \hilb \},
\]
is an operator space such that
$\opsp \otimes \bop{\hilb}{} \subseteq \opsp \matten \bop{\hilb}{}
\subseteq \opsp \uwkten \bop{\hilb}{}$. The first inclusion is an
equality if $\hilb$ is finite dimensional; the latter inclusion is an
equality if and only if $\opsp$ is ultraweakly closed.
If $\hilc$ is another Hilbert space  then
$\bigl( \opsp \matten \bop{\hilb}{} \bigr) \matten \bop{\hilc}{} = %
\opsp \matten \bop{\hilb \otimes \hilc}{}$ with the standard
identifications.

Rectangular matrix spaces of the form $\opsp \matten
\bop{\hilb}{\hilc}$ are defined in the same way. Two important
examples are the \emph{column space}
\[
\opsp \matten \ket{\hilb} := %
\{ T \in \bop{\ini}{\ini \otimes \hilb} %
: E^z T \in \opsp \text{ for all } z \in \hilb \} %
= \opsp \matten \bop{\C}{\hilb}
\]
and the \emph{row space}
\[
\opsp \matten \bra{\hilb} := %
\{ T \in \bop{\ini \otimes \hilb}{\ini} %
: T E_w \in \opsp \text{ for all } w \in \hilb \} %
= \opsp \matten \bop{\hilb}{\C}.
\]
\end{definition}

\begin{definition}
If $\Phi : \opsp \to \opsq$ is a completely bounded map between
operator spaces and $\hilb$ is a Hilbert space then the
\emph{matrix-space lifting} of $\Phi$ is the unique completely bounded
map
\[
\Phi \matten \id_{\bop{\hilb}{}} : \opsp \matten \bop{\hilb}{} \to %
\opsq \matten \bop{\hilb}{}
\]
such that
$E^z ( \Phi \matten \id_{\bop{\hilb}{}} )( T ) E_w = %
\Phi( E^z T E_w )$
for all $T \in \opsp \matten \bop{\hilb}{}$ and $z$,
$w \in \hilb$. It then holds that
$\| \Phi \matten \id_{\bop{\hilb}{}} \|_\cb = \| \Phi \|_\cb$.
For the existence of such a map, see \cite{LiW01} or
\cite[Theorem~2.5]{Blt10}.

If $\hilc$ is another Hilbert space then, with the standard
identifications,
\begin{equation}\label{eqn:liftident}
( \Phi \matten \id_{\bop{\hilb}{}} ) \matten \id_{\bop{\hilc}{}} = %
\Phi \matten \id_{\bop{\hilb \otimes \hilc}{}}.
\end{equation}
\end{definition}

\begin{lemma}\label{lem:cplifting}
If $\Phi : \alg \to \blg$ is a completely positive map between unital
$C^*$~algebras and~$\hilc$ is a Hilbert space then the matrix-space
lifting $\Phi \matten \id_{\bop{\hilc}{}}$ is a completely positive
map between operator systems.
\end{lemma}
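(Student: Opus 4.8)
The plan is to reduce the statement to the known fact that matrix-space liftings preserve complete positivity in the sense of positivity of matrices, and then to observe that what remains is purely a matter of identifying $\alg\matten\bop{\hilc}{}$ and $\blg\matten\bop{\hilc}{}$ as operator systems. First I would recall that, since $\alg$ and $\blg$ are unital $C^*$~algebras, each is in particular an operator system, and by the defining property $E^z T E_w\in\alg$ for all $z,w$, together with $I_{\ini\otimes\hilc}=E^z I_\ini E_w$ patched appropriately, one checks that $\alg\matten\bop{\hilc}{}$ is a self-adjoint unital subspace of $\bop{\ini\otimes\hilc}{}$, hence an operator system (and similarly for $\blg$); this uses only the definitions in Section~\ref{sec:liftings}. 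So the phrase ``map between operator systems'' is justified, and it remains to show $\Phi\matten\id_{\bop{\hilc}{}}$ is completely positive as such a map.

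For complete positivity, fix $n\in\N$ and work with the ampliation $(\Phi\matten\id_{\bop{\hilc}{}})^{(n)}$ acting on $M_n\bigl(\alg\matten\bop{\hilc}{}\bigr)$. The key step is the canonical identification $M_n\bigl(\alg\matten\bop{\hilc}{}\bigr)=\alg\matten\bop{\hilc\otimes\C^n}{}$ (equivalently $\alg\matten\bop{\hilc}{}\matten\bop{\C^n}{}$), which is exactly the associativity of matrix spaces recorded in the excerpt, combined with the identity~(\ref{eqn:liftident}) that tells us $(\Phi\matten\id_{\bop{\hilc}{}})\matten\id_{\bop{\C^n}{}}=\Phi\matten\id_{\bop{\hilc\otimes\C^n}{}}$. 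Thus it suffices to prove the $n=1$ statement with $\hilc$ replaced by the arbitrary Hilbert space $\hilc\otimes\C^n$; in other words, positivity (not just complete positivity, but the bare positivity) of $\Phi\matten\id_{\bop{\hilc}{}}$ for every $\hilc$ already gives complete positivity.

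To prove this bare positivity: let $T\in\alg\matten\bop{\hilc}{}$ with $T\ge 0$, so $T=S^*S$ for some $S\in\bop{\ini\otimes\hilc}{}$; I want $(\Phi\matten\id_{\bop{\hilc}{}})(T)\ge0$, i.e. $\bigl\langle\xi,(\Phi\matten\id_{\bop{\hilc}{}})(T)\xi\bigr\rangle\ge0$ for all $\xi\in\ini\otimes\hilc$. The strategy is to express the sesquilinear form through the slice maps $E^z(\cdot)E_w$. Fix an orthonormal basis $(e_i)$ of $\hilc$; writing $\xi=\sum_i u_i\otimes e_i$ with $u_i\in\ini$ (and $\sum\|u_i\|^2<\infty$), one has $\langle\xi,(\Phi\matten\id_{\bop{\hilc}{}})(T)\xi\rangle=\sum_{i,j}\langle u_i,\Phi\bigl(E^{e_i}TE_{e_j}\bigr)u_j\rangle$, at least after justifying the interchange of the (possibly infinite) sum with $\Phi$. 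Now the finite-dimensional truncations: for each finite $N$, the block matrix $\bigl(E^{e_i}TE_{e_j}\bigr)_{i,j\le N}\in M_N(\alg)$ is positive (it is a compression of $T\ge0$), so applying the completely positive map $\Phi$ entrywise yields a positive matrix in $M_N(\blg)$, whence $\sum_{i,j\le N}\langle u_i,\Phi(E^{e_i}TE_{e_j})u_j\rangle\ge0$; passing to the limit gives the result. The main obstacle I anticipate is precisely this last analytic point—justifying that the slice-map expansion of the quadratic form converges and commutes with $\Phi$ when $\hilc$ is infinite dimensional, since $\Phi\matten\id_{\bop{\hilc}{}}$ is defined only through its slices and $\Phi$ need not be normal. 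I would handle it by first treating $\xi$ with $\sum_i u_i\otimes e_i$ a \emph{finite} sum (a dense set of vectors), where everything is a finite algebraic manipulation and positivity is immediate from the $M_N(\alg)\to M_N(\blg)$ argument above, and then extending to general $\xi$ by norm-continuity of $(\Phi\matten\id_{\bop{\hilc}{}})(T)$ as a bounded operator. Everything else—self-adjointness of the operator system, unitality, the reduction via~(\ref{eqn:liftident})—is routine bookkeeping with the definitions already in place.
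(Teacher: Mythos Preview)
Your proposal is correct and follows essentially the same route as the paper's proof. Both reduce complete positivity to bare positivity via the associativity identity~(\ref{eqn:liftident}), and both obtain positivity by compressing $T$ to a finite-dimensional block (you via finite partial sums in an orthonormal basis, the paper via finite-rank orthogonal projections $P$ on~$\hilc$), invoking complete positivity of $\Phi$ on $M_n(\alg)$, and passing to the limit by boundedness; the two formulations are interchangeable.
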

\begin{proof}
Assume without loss of generality that
$\blg \subseteq \bop{\hilb}{}$ and let
$T \in \alg \matten \bop{\hilc}{}$ be positive. To prove that
$( \Phi \matten \id_{\bop{\hilc}{}} )( T )$ is positive, it
suffices to show that
\[
\langle ( I_\hilb \otimes P ) \xi, %
( \Phi \matten \id_{\bop{\hilc}{}} )( T ) %
( I_\hilb \otimes P ) \xi \rangle \ge 0 %
\qquad ( \xi \in \hilb \otimes \hilc )
\]
for any finite-rank orthogonal projection $P \in \bop{\hilc}{}$. If
$P$ has rank $n$ then $( I_\hilb \otimes P ) \xi$ and $P T P$ may be
considered to be elements of $\hilb^n$ and $M_n( \alg )$, respectively, with the
latter being positive, so that
\[
\langle ( I_\hilb \otimes P ) \xi, %
( \Phi \matten \id_{\bop{\hilc}{}} )( T ) %
( I_\hilb \otimes P ) \xi \rangle = \langle ( I_\hilb \otimes P ) \xi, %
( \Phi \matten \id_{\C^n} )( P T P ) %
( I_\hilb \otimes P ) \xi \rangle \ge 0,
\]
as required. Since $\hilc$ is arbitrary, complete positivity now
follows by~(\ref{eqn:liftident}).
\end{proof}

\section{Quantum stochastic processes, cocycles and differential
equations}\label{sec:qsprocs}

\subsection{Stochastic processes}

\begin{notation}
Fix a Hilbert space $\mul$, the \emph{multiplicity space}. For any
subinterval $J$ of $\R_+$, let $\fock_J$ denote the Boson Fock space
over $L^2( J; \mul )$. The exponential vector corresponding to
$g \in L^2( J; \mul)$ is
$\evec{g} = \bigl( ( n! )^{-1/2} g^{\otimes n}\bigr)_{n \in \Z_+}$.
For brevity, let $\fock := \fock_{\R_+}$,
$\fock_{t)} := \fock_{[ 0, t )}$ and
$\fock_{[t} := \fock_{[ t, \infty )}$ for all~$t \in \R_+$, with
similar abbreviations for the identity operators $I$, $I_{t)}$ and
$I_{[t}$ on these spaces, and where $\fock_{0)} := \C$. Recall the
tensor-product decomposition
\[
\fock \cong \fock_{t)} \otimes \fock_{[t}; \ %
\nvec{f} \mathbin{\leftrightarrow} %
\nvec{f|_{[ 0, t )}} \otimes \nvec{f|_{[ t, \infty )}} \qquad %
\text{for all } t \in \R_+, f \in \elltwo,
\]
where $\nvec{g} = \exp( -\hlf \| g \|^2 ) \evec{g}$ is the normalised
exponential vector corresponding to $g$. This identification will be used frequently
without comment.

The \emph{extended multiplicity space} is $\mmul := \C \oplus \mul$,
and $\wh{z} := ( 1, z ) \in \mmul$ for all~$z \in \mul$.

Let $\Delta_\hilb := I_\hilb \otimes P_\mul \in \bop{\hilb \otimes
\mmul}{}$, where $P_\mul \in \bop{\mmul}{}$ is the orthogonal
projection onto $\{0\} \oplus \mul$. This will be abbreviated to
$\Delta$ when the Hilbert space $\hilb$ is clear from the context.
\end{notation}

\begin{definition}
An \emph{admissible set} is a subset of $\mul$ which contains $0$ and
is total in $\mul$. For such a set $\tot$, let
$\step = \lin \{ 1_{[0,t)} x : x \in \tot, t > 0 \}$ denote the set
of $\tot$-valued right-continuous step functions on
$\R_+$. Admissibility of $\tot$ ensures that $\evecs( \tot )$, the
linear span of the exponential vectors corresponding to
elements of $\step$, is dense in~$\fock$
\cite[Proposition~2.1]{Lin05}.
\end{definition}

\begin{definition}\label{dfn:opproc}
Fix a Hilbert space $\ini$, the \emph{initial space}. For a fixed
admissible set $\tot$, an \emph{operator process} $X$ is a collection
of linear operators $( X_t )_{t \in \R_+}$ such that
\begin{mylist}
\item[(i)]
$\ini \algten \evecs( \tot ) \subseteq \dom X_t %
\subseteq \ini \otimes \fock$ and
$\im X_t \subseteq \ini \otimes \fock$ for all $t \in \R_+$,
\item[(ii)] adaptedness holds, in the sense that
\[
E^{\nvec{f}} X_t E_{\nvec{g}} = \langle \nvec{1_{[ t, \infty )} f}, %
\nvec{1_{[ t, \infty )} g} \rangle %
E^{\nvec{1_{[ 0, t )} f}} X_t E_{\nvec{1_{[ 0, t )} g}}
\]
for all $f$, $g \in \step$ and $t \in \R_+$, and
\item[(iii)] $t \mapsto X_t \xi$ is weakly measurable
for all $\xi \ini \algten \evecs( \tot )$.
\end{mylist}
The operator process $X$ is \emph{strongly measurable} or
\emph{strongly continuous} if $t \mapsto X_t \xi$ is strongly
measurable or norm continuous, respectively, for all $\xi \in \ini
\algten \evecs( \tot )$; it is \emph{weakly continuous} if $t \mapsto
\langle \zeta, X_t \xi \rangle$ for all $\zeta$, $\xi \in \ini \algten
\evecs( \tot )$. The operator process $X$ is \emph{bounded},
\emph{contractive}, \emph{isometric}, \emph{co-isometric} or
\emph{unitary} if each operator $X_t$ has this property. In these
cases we will automatically identify $X_t$ with its continuous
extension to all of $\ini \otimes \fock$, and then adaptedness means
that $X_t = X_{t)} \otimes I_{[t,\infty)}$ for some
$X_{t)} \in \bop{\ini \otimes \fock_{t)}}{}$.
\end{definition}

\begin{remark}\label{rmk:procprops}
\begin{mylist}
\item[(i)] A bounded operator process which is strongly continuous has
locally bounded norm if and only if it is strongly continuous on
all of $\ini \otimes \fock$, by the Banach--Steinhaus Theorem.
\item[(ii)] If $X$ and $Y$ are strongly continuous bounded operator
processes that each have locally bounded norm then the product
$X Y = ( X_t Y_t )_{t \in \R_+}$ is also a strongly continuous
operator process with locally bounded norm.
\item[(iii)] If $\ini$ and $\mul$ are both separable then all operator
processes are strongly measurable, by Pettis' Theorem.
\end{mylist}
\end{remark}

\begin{definition}\label{dfn:cbmp}
A \emph{completely bounded mapping process} $k$ on an operator space
$\opsp \subseteq \bop{\ini}{}$ is a collection of completely bounded maps
$\bigl( k_t : %
\opsp \to \bop{\ini \otimes \fock}{} \bigr)_{t \in \R_+}$ such that
$\bigl( (k_t( x ) \bigr)_{t \in \R_+}$ is an operator process for each
$x \in \opsp$. The adaptedness of each of these processes implies for
each $t \in \R_+$ the existence of a completely bounded map
$k_{t)} : \opsp \to \bop{\ini \otimes \fock_{t)}}{}$ such that
$k_t( x ) = k_{t)}( x ) \otimes I_{[t}$ for all~$x \in \opsp$.

If $k$ is a completely bounded mapping process on $\opsp$ then $k$ is
\emph{strongly measurable} or \emph{strongly continuous} if the
bounded operator process $\bigl( k_t( x ) \bigr)_{t \in \R_+}$ has the
same property for all~$x \in \opsp$. If~$\opsp$ is a $*$-algebra then
the mapping process $k$ is \emph{$*$-homomorphic} if $k_t$ is a
$*$-homomorphism for each $t \in \R_+$.
\end{definition}

\subsection{Cocycles}

\begin{definition}\label{dfn:cocycle}
Given a completely bounded mapping process $k$ on $\opsp$, let
\[
k_t[ f, g ] : \opsp \to \bop{\ini}{}; \ x \mapsto %
E^{\nvec{1_{[ 0, t )} f}} k_t( x ) E_{\nvec{1_{[ 0, t )} g}} = %
E^{\nvec{f|_{[ 0, t )}}} k_{t)} ( x ) E_{\nvec{g|_{[ 0, t )}}}
\]
for all $t \in \R_+$ and $f$,~$g \in \elltwo$. A
\emph{Markovian cocycle on $\opsp$} is a completely bounded mapping
process $k$ such that, for some admissible set $\tot$,
\begin{mylist}
\item[(i)] $k_t[ f, g ]( x ) \in \opsp$ for all $t \in \R_+$, $f$,
$g \in \step$ and $x \in \opsp$,
\item[(ii)] $k_0[ 0, 0 ] = \id_\opsp$ and
\item[(iii)] $k_{s + t}[ f, g ] = %
k_s[ f, g ] \comp k_t[ f( \cdot + s ), g( \cdot + s ) ]$ for all $s$,
$t \in \R_+$ and $f$, $g \in \step$.
\end{mylist}
Since $k$ is adapted, if~(ii) holds then $k_0[ f,g ] = \id_\opsp$ for
all $f$, $g$ in $\step$.
\end{definition}

\begin{remark}\label{rem:markovian}
Totality of the set $\{ \nvec{f} : f \in \step \}$, linearity and norm
continuity of the map $z \mapsto E_z$, and adaptedness of $k$, show
that condition~(i) of Definition~\ref{dfn:cocycle} is equivalent to
the requirement that
\begin{mylist}
\item[(i)$'$] $k_t( \opsp ) \subseteq \opsp \matten \bop{\fock}{}$.
\end{mylist}
Furthermore, in this formulation, the following equivalent
versions of conditions~(ii) and~(iii) appear as well:
\begin{mylist}
\item[(ii)$'$] $k_0( x ) = x \otimes I_\fock$ for all $x \in \opsp$ and
\item[(iii)$'$] $k_{s+t} = \wh{k}_s \comp \sigma_s \comp k_t$ for all $s, t \in \R_+$,
\end{mylist}
where $\sigma_s : \bop{\ini \otimes \fock}{} \to %
\bop{\ini \otimes \fock_{[s}}{}$ is the ampliated right shift, arising
from the natural unitary identification
$\ini \otimes \fock \cong \ini \otimes \fock_{[s}$. In particular, if
$T \in \bop{\ini \otimes \fock}{}$ then
\[
E^{\nvec{f}} \sigma_s( T ) E_{\nvec{g}} = %
E^{\nvec{f( \cdot + s )}} T E_{\nvec{g( \cdot + s )}} %
\qquad \text{for all } f, g \in L^2\bigl( [ s, \infty ); \mul \big),
\]
and $\wh{k}_s := k_{s)} \matten \id_{\bop{\fock_{[s}}{}}$; note the
equality and inclusion
\[
\sigma_s\bigl( \opsp \matten \bop{\fock}{} \bigr) = %
\opsp \matten \bop{\fock_{[s}}{} \qquad \mbox{and} \qquad
k_{s)}( \opsp ) \subseteq \opsp \matten \bop{\fock_{s)}}{},
\]
where the latter follows from condition~(i). The definition of
$\sigma_s$ here differs slightly from that used in~\cite{LiW14}.

Thus whether a completely bounded mapping process is a Markovian
cocycle is independent of the choice of admissible set $\tot$, and the
set $\step$ in Definition~\ref{dfn:cocycle} can be replaced by any
subset $S$ of $\elltwo$ for which $\{ \nvec{f} : f \in S \}$ is total
in $\fock$.
\end{remark}

The following proposition is the matrix-space version of the
corresponding result for normal mapping processes on von~Neumann
algebras \cite[Lemma~1.6(b)]{BLS13}.

\begin{proposition}\label{prp:cosemi}
Let $k$ be a completely bounded mapping process on $\opsp$. Then $k$
is a Markovian cocycle if and only if
$( K_t := \wh{k}_t \comp \sigma_t )_{t \in \R_+}$ is a semigroup
on~$\opsp \matten \bop{\fock}{}$, that is,
\[
K_t\bigl( \opsp \matten \bop{\fock}{} \bigr) \subseteq %
\opsp \matten \bop{\fock}{}, \quad %
K_0 = \id_{\opsp \matten \bop{\fock}{}} \quad \text{and} \quad %
K_{s + t} = K_s \comp K_t \quad \text{for all } s, t \in \R_+.
\]
\end{proposition}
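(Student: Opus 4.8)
The plan is to unwind both sides into their "matrix elements'' $E^{\nvec f}(\cdot)E_{\nvec g}$ and use the tensor-product splitting $\fock\cong\fock_{t)}\otimes\fock_{[t}$ together with the defining property of matrix-space liftings. First I would observe that the condition $K_t(\opsp\matten\bop{\fock}{})\subseteq\opsp\matten\bop{\fock}{}$ already holds for any completely bounded mapping process whenever condition~(i) of Definition~\ref{dfn:cocycle} holds: indeed $\sigma_t$ carries $\opsp\matten\bop{\fock}{}$ onto $\opsp\matten\bop{\fock_{[t}}{}$, and $\wh k_t=k_{t)}\matten\id_{\bop{\fock_{[t}}{}}$ maps $\opsp\matten\bop{\fock_{t)}}{}\matten\bop{\fock_{[t}}{}=\opsp\matten\bop{\fock}{}$ into $\bigl(\opsp\matten\bop{\fock_{t)}}{}\bigr)\matten\bop{\fock_{[t}}{}$ using $k_{t)}(\opsp)\subseteq\opsp\matten\bop{\fock_{t)}}{}$, as recorded in Remark~\ref{rem:markovian}. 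So the invariance clause in the semigroup property is equivalent to condition~(i)$'$, and I need only match up the other two clauses with conditions~(ii) and~(iii).

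For $K_0$: by definition $K_0=\wh k_0\comp\sigma_0=\wh k_0$, and $\wh k_0=k_{0)}\matten\id_{\bop\fock{}}$ with $k_{0)}:\opsp\to\bop{\ini\otimes\fock_{0)}}{}=\bop\ini{}$. Now $k_0[0,0]=\id_\opsp$ (condition~(ii), in the form (ii)$'$, $k_0(x)=x\otimes I_\fock$) says exactly that $k_{0)}=\id_\opsp$, whence $K_0=\id_{\opsp\matten\bop\fock{}}$; conversely $K_0=\id$ forces $k_{0)}=\id_\opsp$ and hence $k_0(x)=x\otimes I_\fock$. So the $K_0$-clause is equivalent to~(ii)$'$.

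For the semigroup law, fix $s,t\in\R_+$. The cleanest route is to compare $K_{s+t}$ and $K_s\comp K_t$ by testing against $E^{\nvec f}\cdot E_{\nvec g}$ for $f,g\in\step$, using that $\{\nvec f:f\in\step\}$ is total in $\fock$ (so equality of elements of $\opsp\matten\bop\fock{}$, indeed of $\bop{\ini\otimes\fock}{}$, can be checked on these vectors) and that $E_{\nvec g}=E_{\nvec{g|_{[0,t)}}}\otimes E_{\nvec{g|_{[t,\infty)}}}$ under the splitting. Writing out $K_{s+t}(T)=\wh k_{s+t}\bigl(\sigma_{s+t}(T)\bigr)$ and $K_s(K_t(T))=\wh k_s\bigl(\sigma_s(\wh k_t(\sigma_t(T)))\bigr)$, one reduces — via the lifting identity $E^z(\Phi\matten\id)(\cdot)E_w=\Phi(E^zE_w)$, the shift formula $E^{\nvec f}\sigma_s(T)E_{\nvec g}=E^{\nvec{f(\cdot+s)}}TE_{\nvec{g(\cdot+s)}}$, and the factorisation of $\wh k_s$, $\sigma_s$ along $\fock_{s)}\otimes\fock_{[s}$ — to an identity among the compressions $k_{s+t}[f,g]$, $k_s[f,g]$ and $k_t[f(\cdot+s),g(\cdot+s)]$. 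That identity is precisely condition~(iii), read in reverse to recover the general case from the compressed one. The bookkeeping is entirely parallel to \cite[Lemma~1.6(b)]{BLS13}; the only genuinely new point, and the one to be careful about, is that in the $C^*$-setting the liftings $k_{t)}\matten\id$ need \emph{not} be multiplicative, but multiplicativity is never invoked here — one only ever uses that a lifting is determined by its action on matrix elements, so the argument goes through verbatim. I expect the main obstacle to be purely notational: keeping straight the several nested tensor factorisations $\fock\cong\fock_{t)}\otimes\fock_{[t}\cong\fock_{s)}\otimes\fock_{[s,s+t)}\otimes\fock_{[s+t}$ and verifying that the shift and lifting operations compose correctly across them, for which the associativity relations $(\opsp\matten\bop\hilb{})\matten\bop\hilc{}=\opsp\matten\bop{\hilb\otimes\hilc}{}$ and~\eqref{eqn:liftident} are exactly what is needed.
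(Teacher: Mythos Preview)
Your proposal is correct and follows the same route as the paper: both reduce the semigroup properties of $K$ to the single identity $E^{\nvec{f}} K_s(T) E_{\nvec{g}} = k_s[f,g]\bigl(E^{\nvec{f(\cdot+s)}} T E_{\nvec{g(\cdot+s)}}\bigr)$, which the paper states directly and from which invariance, $K_0=\id$, and the semigroup law all follow from conditions~(i)--(iii). Your minor slip on the domain of $\wh k_t$ (it acts on $\opsp\matten\bop{\fock_{[t}}{}$, the range of $\sigma_t$, not on $\opsp\matten\bop{\fock}{}$) is harmless, and your typo $\Phi(E^z E_w)$ should of course read $\Phi(E^z\,\cdot\,E_w)$.
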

\begin{proof}
Note first that $\dom K_t = \opsp \matten \bop{\fock}{}$, and that
\[
k_t( x ) = K_t( x \otimes I ) %
\qquad \text{for all } t \in \R_+ \text{ and } x \in \opsp;
\]
it follows immediately that if $K$ is a semigroup on
$\opsp \matten \bop{\fock}{}$ then $k$ is a Markovian cocycle.

For the converse, let $k$ be a completely bounded mapping process on
$\opsp$. The identity
\[
E^{\nvec{f}} K_s( T ) E_{\nvec{g}} = k_s[ f, g ] \bigl( E^{\nvec{f (
\cdot + s)}} T E_{\nvec{g ( \cdot + s) }} \bigr)
\]
holds for all $f, g \in \step$, $s \in \R_+$ and
$T \in \opsp \matten \bop{\fock}{}$. Hence if $k$ is a Markovian
cocycle then
$K_s \bigl( \opsp \matten \bop{\fock}{} \bigr) \subseteq \opsp \matten
\bop{\fock}{}$. Moreover, this identity and conditions~(ii) and~(iii)
of Definition~\ref{dfn:cocycle} then give that
\[
K_0( T ) = T \qquad \text{and} \qquad %
K_s \bigl( K_t( T ) \bigr) = K_{s+t}( T )
\]
for all $s$, $t \in \R_+$ as required.
\end{proof}

\subsection{Semigroup representation}

\begin{notation}
Given a completely bounded mapping process $k$ on $\opsp$, let
\[
\cP^{z, w}_t := k_t[ 1_{[ 0, t )} z, 1_{[ 0, t )} w ] : %
\opsp \to \bop{\ini}{}; \ x \mapsto %
E^{\nvec{1_{[ 0, t )} z}} k_t( x ) E_{\nvec{1_{[ 0, t )} w}}
\]
for all $z$, $w \in \mul$ and $t \in \R_+$.
\end{notation}

\begin{theorem}\label{thm:cocycle}
A completely bounded mapping process $k$ on $\opsp$ is a Markovian
cocycle if and only if there exists an admissible set $\tot$ such that
$( \cP^{z, w}_t )_{t \in \R_+}$ is a semigroup on $\opsp$ for all $z$,
$w \in \tot$ and, given any $f$, $g \in \step$ subordinate to
some partition $\{ 0 = t_0 < \cdots < t_n < \cdots \}$ of~$\R_+$, it
holds that
\begin{equation}\label{eqn:cocyclesemigroup}
k_t[ f, g ] = \cP^{z_0, w_0}_{t_1 - t_0} \comp \cdots \comp %
\cP^{z_n, w_n}_{t - t_n} \qquad \text{whenever } %
t \in [ t_n, t_{n + 1} ),
\end{equation}
where
$z_j = f( t_j )$ and $w_j = g( t_j )$ for~$j = 0$, \ldots, $n$. In
this case the decomposition~(\ref{eqn:cocyclesemigroup}) holds for any
choice of admissible set $\tot$.
\end{theorem}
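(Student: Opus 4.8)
The plan is to reduce the statement to Proposition~\ref{prp:cosemi}, which characterises Markovian cocycles by the semigroup property of $K_t = \wh{k}_t \comp \sigma_t$ on $\opsp \matten \bop{\fock}{}$, and then to extract the semigroup $(\cP^{z,w}_t)$ and the product decomposition~(\ref{eqn:cocyclesemigroup}) by sandwiching with exponential-vector functionals. For the forward implication, suppose $k$ is a Markovian cocycle. The cocycle identity~(iii) of Definition~\ref{dfn:cocycle}, specialised to the step functions $f = 1_{[0,t)}z$, $g = 1_{[0,t)}w$ (extended by $z$, $w$ beyond $t$, which is harmless by adaptedness), together with condition~(ii), immediately yields that $(\cP^{z,w}_t)_{t\in\R_+}$ is a semigroup on $\opsp$ for every $z$, $w$ in the ambient admissible set --- and since whether $k$ is a cocycle is independent of that set (Remark~\ref{rem:markovian}), this holds for every admissible $\tot$. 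For the decomposition, fix $f$, $g \in \step$ subordinate to a partition $\{0 = t_0 < t_1 < \cdots\}$ and $t \in [t_n, t_{n+1})$. Iterating identity~(iii) across the breakpoints $t_1, \ldots, t_n$, at each stage the shifted functions $f(\cdot + t_j)$, $g(\cdot + t_j)$ are constant equal to $z_j = f(t_j)$, $w_j = g(t_j)$ on the relevant interval $[0, t_{j+1} - t_j)$, so each factor $k_{t_{j+1}-t_j}[\,f(\cdot+t_j), g(\cdot+t_j)\,]$ collapses to $\cP^{z_j, w_j}_{t_{j+1}-t_j}$; the last factor is $\cP^{z_n, w_n}_{t - t_n}$. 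This gives~(\ref{eqn:cocyclesemigroup}).

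For the converse, assume an admissible set $\tot$ exists such that each $(\cP^{z,w}_t)_{t\in\R_+}$ is a semigroup on $\opsp$ and~(\ref{eqn:cocyclesemigroup}) holds for all $f$, $g \in \step$. We must verify the three conditions of Definition~\ref{dfn:cocycle}. Condition~(ii) is the case $n = 0$, $t = 0$ of~(\ref{eqn:cocyclesemigroup}) together with $\cP^{0,0}_0 = \id_\opsp$ (the semigroup at time~$0$). Condition~(i) --- equivalently, in the form~(i)$'$ of Remark~\ref{rem:markovian}, that $k_t(\opsp) \subseteq \opsp \matten \bop{\fock}{}$ --- follows since for $f$, $g \in \step$ the functional $k_t[f,g](x)$ equals the right-hand side of~(\ref{eqn:cocyclesemigroup}), which lies in $\opsp$ as a composition of maps into $\opsp$; totality of $\{\nvec{f} : f \in \step\}$ in $\fock$, norm-continuity and linearity of $z \mapsto E_z$, and adaptedness then upgrade this to membership in $\opsp \matten \bop{\fock}{}$, exactly as in Remark~\ref{rem:markovian}.

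The main work is condition~(iii): $k_{s+t}[f,g] = k_s[f,g] \comp k_t[f(\cdot+s), g(\cdot+s)]$ for all $s$, $t \in \R_+$ and $f$, $g \in \step$. I would fix $f$, $g \in \step$ and, enlarging the partition if necessary, assume it contains $s$ and refines the jump points of both $f$ and $g$; also refine so that $s + t$ is a partition point when convenient, handling the generic $s + t$ by the semigroup property of the relevant $\cP^{z,w}$ factor straddling $s + t$. Applying~(\ref{eqn:cocyclesemigroup}) to each of the three expressions $k_{s+t}[f,g]$, $k_s[f,g]$ and $k_t[f(\cdot+s), g(\cdot+s)]$, the claimed identity becomes the assertion that a composition of $\cP$-factors over the partition points in $[0, s+t)$ splits as the composition over $[0,s)$ followed by the composition over $[s, s+t)$ (re-indexed) --- which is immediate once one checks that the shift by~$s$ sends the partition points of $f$, $g$ in $[s, s+t)$ to those of $f(\cdot+s)$, $g(\cdot+s)$ in $[0,t)$ with matching values $z_j$, $w_j$, and that adjacent $\cP$-factors on either side of an interior non-jump partition point recombine via the semigroup law. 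The one genuine subtlety is bookkeeping: ensuring that when $s$ or $s+t$ falls strictly between two partition points, one inserts it as an extra node and uses the $\cP$-semigroup property $\cP^{z,w}_{a} \comp \cP^{z,w}_{b} = \cP^{z,w}_{a+b}$ to see that the value of the decomposition is unchanged --- so that the decomposition is genuinely partition-independent. This is the step I expect to be the main obstacle, though it is combinatorial rather than analytic. Finally, the last sentence of the theorem --- that once $k$ is known to be a cocycle the decomposition~(\ref{eqn:cocyclesemigroup}) holds for \emph{any} admissible set --- is exactly the forward implication already established, since that argument used only the cocycle identity and not the particular $\tot$.
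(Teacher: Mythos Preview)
The paper does not actually give a proof of this theorem: it simply cites \cite[Proposition~5.1]{LiW14}. Your direct argument is correct and is the standard route --- use the cocycle identity~(iii) of Definition~\ref{dfn:cocycle} iteratively across partition points for the forward direction, and for the converse verify~(i)--(iii) from the product formula~(\ref{eqn:cocyclesemigroup}), the key observation being that the decomposition is partition-independent by the semigroup law for each $\cP^{z,w}$. The bookkeeping you flag (inserting $s$ and $s+t$ as extra nodes and recombining via $\cP^{z,w}_a \comp \cP^{z,w}_b = \cP^{z,w}_{a+b}$) is exactly what is needed and presents no real difficulty; your proposal is sound.
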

\begin{proof}
See \cite[Proposition~5.1]{LiW14}.
\end{proof}

\begin{remark}
Let $k$ be a completely bounded mapping process on $\opsp$ and let
$\tot$ be an admissible set. If
$\{ \cP^{z, w}_t : z, w \in \tot, \ t \in \R_+ \}$ is a family of
linear maps on $\opsp$ that satisfies (\ref{eqn:cocyclesemigroup})
for all functions $f$, $g \in \step$ that are subordinate to the partition
$\{ 0 = t_0 < \cdots < t_n < \cdots \}$ then
\[
\cP^{z, w}_s \comp \cP^{z, w}_t = \cP^{z, w}_{s + t} \qquad %
\textrm{for all } z, w \in \tot \text{ and } s, t \in \R_+.
\]
To see this, let $f$ and $g$
take the values~$z$ and $w$, respectively, on a sufficiently large
interval containing the origin. Thus, it is not necessary to verify
independently that the semigroup property holds when applying
Theorem~\ref{thm:cocycle} to produce a Markovian cocycle.
\end{remark}

\subsection{Differential equations}

\begin{notation}
Let $\opsp_0$ be a subset of $\opsp$ and let
$\psi : \opsp_0 \to \opsp \matten \bop{\mmul}{}$ be a map.  Given a
completely bounded mapping process $k$ on $\opsp$ define
$\wt{k}_t := k_t \matten \id_{\bop{\mmul}{}}$. For a fixed admissible
set~$\tot$, the statement that
\[
\rd k_t( x ) = ( \wt{k}_t \comp \psi )( x ) \std \Lambda_t \qquad %
\text{weakly on } \ini \algten \evecs( \tot )
\]
for some $x \in \opsp_0$ means that
\[
t \mapsto \langle u \otimes \nvec{f}, k_t\bigl( E^{\wh{f( t )}} %
\psi( x ) E_{\wh{g( t )}} \bigr) v \otimes \nvec{g} \rangle
\]
is locally integrable and
\[
\langle u \otimes \nvec{f}, %
\bigl( k_t( x ) - k_0( x ) \bigr) v \otimes \nvec{g} \rangle = %
\int_0^t \langle u \otimes \nvec{f}, k_s\bigl( E^{\wh{f( s )}} %
\psi( x ) E_{\wh{g( s )}} \bigr) v \otimes \nvec{g} \rangle \std s
\]
for all $u$, $v \in \ini$, $f$, $g \in \step$ and $t \in \R_+$. If
this holds for all $x \in \opsp_0$ then $k$ is a \emph{weak solution}
of the Evans--Hudson QSDE in the terminology of
\cite{Lin05, LiP98,LiW00a}.
A \emph{strong solution} must, in addition, satisfy the
extra requirement that $\bigl( \wt{k}_t( \psi( x ) ) \bigr)_{t \in \R_+}$
be integrable, as explained below in
Definition~\ref{dfn:integrable}.
\end{notation}

\begin{remark}
Recall that a \emph{$C_0$~semigroup} on the Banach space $X$ is a
family $( T_t )_{t \in \R_+} \subseteq \bop{X}{}$ such that
\[
T_0 = I_X, \qquad %
T_{s + t} = T_s \comp T_s \quad \text{for all } s, t \in \R_+ \quad %
\text{and} \quad \lim_{t \to 0} \| T_t( x ) - x \| = 0 \quad %
\text{for all } x \in X.
\]
It follows from these assumptions that the map
\[
\R_+ \times X \to X; \ ( t, x ) \mapsto T_t( x )
\]
is jointly continuous \cite[Theorem~6.2.1]{Dav07}.

Any $C_0$ semigroup is characterised by its \emph{infinitesimal
generator} $\tau : \dom \tau \to X$, the closed, densely defined
operator such that
\[
\dom \tau = \bigl\{ x \in X : \lim_{t \to 0} t^{-1} ( T_t( x ) - x ) %
\text{ exists} \bigr\} \quad \text{and} \quad %
\tau( x ) = \lim_{t \to 0} t^{-1} ( T_t( x ) - x ).
\]
Note that $\dom \tau$ is left invariant by $T_t$
\cite[Lemma~6.1.11]{Dav07}. A \emph{core} for $\tau$ is a subspace
$X_0$ of $\dom \tau$ such that the graph of the restriction
$\tau|_{X_0}$ is dense in the graph of $\tau$, regarded as a subset of
the normed space $X \oplus X$ with the subspace topology.
\end{remark}

\begin{notation}\label{not:chi}
Let $\chi: \mul \times \mul \to \C$ denote the map $\chi( z, w ) =
\hlf \| z \|^2 + \hlf \| w \|^2 - \langle z, w \rangle$.
\end{notation}

\begin{theorem}\label{thm:intcocycle}
Let $k$ be a completely bounded mapping process on $\opsp$ with
locally bounded norm. Suppose there exists an admissible set $\tot$, a
norm-dense subspace $\opsp_0$ of $\opsp$ and a linear map
$\psi : \opsp_0 \to \opsp \matten \bop{\mmul}{}$ such that
\[
k_0( x ) = x \otimes I \qquad \text{and} \qquad \rd k_t( x ) = %
( \wt{k}_t \comp \psi ) ( x ) \std \Lambda_t \quad %
\text{ weakly on } \ini \algten \evecs( \tot )
\]
for all $x \in \opsp_0$. If, for all $z$, $w \in \tot$,
there exists a $C_0$ semigroup generator
$\eta_{z, w} : \dom \eta_{z, w} \to \opsp$ with a core
$\opsp_{z, w} \subseteq \opsp_0$ such that
$\eta_{z, w}( x ) = E^{\wh{z}} \psi( x ) E_{\wh{w}}$ for all
$x \in \opsp_{z, w}$ then $k$ is a Markovian cocycle.
\end{theorem}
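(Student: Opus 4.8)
The plan is to use Theorem~\ref{thm:cocycle} as the target: it suffices to produce an admissible set $\tot$ such that the family $\cP^{z,w}_t$ is a semigroup on $\opsp$ for all $z,w \in \tot$, and such that the multiplicative decomposition~(\ref{eqn:cocyclesemigroup}) holds for all step functions $f,g$ subordinate to any partition. The given admissible set $\tot$ is the natural candidate. First I would show that for fixed $z,w \in \tot$ the process $( \cP^{z,w}_t )_{t \in \R_+}$ coincides with the $C_0$~semigroup generated by $\eta_{z,w}$. The weak QSDE, specialised to the constant functions $f \equiv z$, $g \equiv w$ on a large interval, gives
\[
\langle u, \bigl( \cP^{z,w}_t(x) - x \bigr) v \rangle = \int_0^t \langle u, \cP^{z,w}_s\bigl( E^{\wh z}\psi(x) E_{\wh w}\bigr) v \rangle \std s
= \int_0^t \langle u, \cP^{z,w}_s\bigl( \eta_{z,w}(x)\bigr) v \rangle \std s
\]
for all $x \in \opsp_{z,w}$, $u,v \in \ini$. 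Local boundedness of the norm of $k$, together with adaptedness, gives that $( \cP^{z,w}_t )$ is a locally bounded family of completely bounded maps on $\opsp$; one then upgrades weak measurability/integrability to obtain that $t \mapsto \cP^{z,w}_t(x)$ is (weakly, hence by a standard argument norm-) continuous on the core, with $\frac{\rd}{\rd t}\cP^{z,w}_t(x) = \cP^{z,w}_t(\eta_{z,w}(x))$ at $t=0$ yielding $\cP^{z,w}_0 = \id$ and the correct right derivative. A Gr\"onwall/uniqueness argument — comparing $\cP^{z,w}_t$ with the semigroup $e^{t\eta_{z,w}}$ on the core $\opsp_{z,w}$, which is dense and invariant enough — then forces $\cP^{z,w}_t = e^{t\eta_{z,w}}$ for all $t$. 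In particular $( \cP^{z,w}_t )$ is a semigroup on $\opsp$.

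Next I would establish~(\ref{eqn:cocyclesemigroup}). For $f,g \in \step$ subordinate to a partition $\{0 = t_0 < t_1 < \cdots\}$, adaptedness of $k$ and the factorisation $\fock \cong \fock_{t_1)} \otimes \fock_{[t_1,\infty)}$ let one write $k_t[f,g]$, for $t \in [t_n, t_{n+1})$, in terms of $k_{t_1)}$ on $[0,t_1)$ and a "shifted" piece; the point is that the weak QSDE has time-dependent coefficients that are constant ($= E^{\wh{z_j}}\psi(\cdot)E_{\wh{w_j}}$) on each subinterval $[t_j, t_{j+1})$. On $[0,t_1)$ the restriction of the QSDE is exactly the constant-coefficient equation solved above with data $(z_0,w_0)$, so $k_t[f,g]$ on that block equals $\cP^{z_0,w_0}_{t-t_0}$ composed with the evaluation at time $t_1$ of the remaining process; iterating the factorisation across the partition and matching each block to the corresponding constant-coefficient semigroup $\cP^{z_j,w_j}$ gives the claimed product. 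This is essentially a bookkeeping argument once the single-block identification is in hand, but care is needed because the matrix-space liftings $\wt{k}_t$ are only completely bounded, not multiplicative, so one must work throughout with the scalar sandwiched maps $E^{\nvec f} \cdot E_{\nvec g}$ (which behave well under the factorisation) rather than with $k_t$ directly. Having verified both hypotheses of Theorem~\ref{thm:cocycle}, we conclude $k$ is a Markovian cocycle, and the final sentence of that theorem shows the conclusion is independent of the choice of $\tot$.

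The main obstacle I expect is the passage from the weak, merely-integrable form of the QSDE to genuine semigroup identities: specifically, proving that $t \mapsto \cP^{z,w}_t(x)$ is norm-continuous and that it satisfies a bona fide differential equation on the core $\opsp_{z,w}$, so that the uniqueness part of the $C_0$~semigroup theory applies. The delicate points are (i) promoting weak continuity to norm continuity, which should follow from local norm-boundedness of $( \cP^{z,w}_t )$ and density arguments, and (ii) showing that the integrated weak equation, valid only for $x \in \opsp_{z,w}$ and tested against vectors, actually pins down $\cP^{z,w}_t$ as the semigroup — here the fact that $\opsp_{z,w}$ is a \emph{core} (not just dense in $\opsp$) for $\eta_{z,w}$ is what makes the comparison with $e^{t\eta_{z,w}}$ go through. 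The remaining measurability and interchange-of-limit issues are routine given Remark~\ref{rmk:procprops} and the hypotheses.
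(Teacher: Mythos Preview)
Your overall strategy---reduce to Theorem~\ref{thm:cocycle} by identifying the associated maps with $C_0$~semigroups---is exactly what the paper does, but two points in your sketch are not right and the organisation differs in a way that matters.

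First, the claim that weak continuity of $t \mapsto \cP^{z,w}_t(x)$ can be upgraded to norm continuity ``by a standard argument'' is false: weak continuity plus local boundedness does \emph{not} yield norm continuity for a general family of maps (that implication holds for semigroups, which is precisely what is not yet established). The paper never claims norm continuity of $k_r(f,g)$; it uses only \emph{weak} continuity of $r \mapsto k_r(f,g)(y)$ together with \emph{norm} continuity of $r \mapsto \cQ^{z,w}_r(y)$, the latter coming for free from the $C_0$ hypothesis on the given semigroup. Second, your displayed integral equation for $\cP^{z,w}_t$ omits the normalisation factor: with $k_t(f,g) := E^{\nvec{f}} k_t(\cdot) E_{\nvec{g}}$ one has $\cP^{z,w}_s = e^{-s\chi(z,w)} \cQ^{z,w}_s$, so the semigroup generated by $\eta_{z,w}$ is $\cQ^{z,w}$, not $\cP^{z,w}$.

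On the uniqueness step, ``Gr\"onwall'' is not the right tool; iteration of the integral identity would require $x$ to lie in arbitrarily high powers of $\eta_{z,w}$. The paper instead runs the standard variation-of-constants argument: for $y \in \dom\eta_{z,w}$ and $t \in (t_n,t_{n+1}]$, set $F(s) = \langle u, k_s(f,g)(\cQ^{z,w}_{t-s}(y)) v\rangle$ and show $F$ is continuous with vanishing right derivative on $[t_n,t]$, hence constant. This yields $k_t(f,g) = k_{t_n}(f,g) \comp \cQ^{z_n,w_n}_{t-t_n}$ directly for \emph{general} step functions $f,g$, and iterating over the partition gives~(\ref{eqn:cocyclesemigroup}) in one pass. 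Treating constant and non-constant $f,g$ separately, as you propose, is awkward because the ``shifted piece'' you allude to cannot be extracted from adaptedness alone---that factorisation is the cocycle identity you are trying to prove. Working throughout with $k_s(f,g)$ and the vanishing-derivative trick on each subinterval is what makes the argument go through without circularity. The core hypothesis enters exactly where you expect: to pass from $x \in \opsp_{z,w}$ to $y \in \dom\eta_{z,w}$ in the integral identity, so that the invariance of $\dom\eta_{z,w}$ under $\cQ^{z,w}$ can be used when differentiating $F$.
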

\begin{proof}
If $f$, $g \in \step$ are constant on $[ s, t ) \subseteq \R_+$, say
$f( s ) = z$ and $g( s ) = w$, then
\[
\langle u \otimes \nvec{f}, %
\bigl( k_t( x ) - k_s( x ) \bigr) v \otimes \nvec{g} \rangle = %
\int_s^t \langle u \otimes \nvec{f}, %
k_r\bigl( \eta_{z, w}( x ) \bigr) v \otimes \nvec{g} \rangle \std r
\]
for all $u$, $v \in \ini$ and $x \in \opsp_{z, w}$. Given
$y \in \dom \eta_{z, w}$, there exists a sequence
$( x_n )_{n \in \N} \subseteq \opsp_{z, w}$ such that
\[
\| x_n - y \| + \| \eta_{z, w}( x_n - y ) \| \to 0 %
\qquad \text{as } n \to \infty
\]
and therefore Lebesgue's Dominated Convergence Theorem implies that,
in the weak sense,
\[
k_t( f, g )( y ) - k_s( f, g )( y ) = %
\int_s^t k_r( f, g )\bigl( \eta_{z, w}( y ) \bigr) \std r %
\qquad \text{for all } y \in \dom \eta_{z, w},
\]
where $k_r( f, g )( y ) := E^{\nvec{f}} k_r( y ) E_{\nvec{g}}$ and so
on. In particular, the function $r \mapsto k_r( f, g )( y )$ is
weakly continuous on $[ s, t ]$ for all $y \in \dom \eta_{z, w}$, so
for all $y \in \opsp$.

We now follow \cite[Proof of Proposition~3.4]{LiP98}. Let $f$,
$g \in \step$ be subordinate to the partition
$\{ 0 = t_0 < t_1 < \cdots \}$, let
$t \in ( t_n, t_{n + 1} ]$ for some $n \in \Z_+$,
let~$y \in \dom \eta_{z, w}$, let
$u$, $v \in \ini$ and consider the function
\[
F : [ t_n, t ] \to \C; \ s \mapsto \langle u, %
k_s( f, g )\bigl( \cQ^{z, w}_{t - s}( y ) \bigr) v \rangle,
\]
where $( \cQ^{z, w}_t )_{t \in \R_+}$ is the $C_0$~semigroup on
$\opsp$ generated by $\eta_{z, w}$, with $z = f( t_n )$ and
$w = g( t_n )$. The weak continuity of $r \mapsto k_r( f, g )( x )$
for all $x \in \opsp$ and the norm continuity of
$r \mapsto \cQ^{z, w}_r( y )$, together with the locally uniform
boundedness of the norm of $k$, give the continuity of~$F$.

If $s \in [ t_n, t )$ and $h \in ( 0, t - s )$ then, since
$\dom \eta_{z, w}$ is invariant under $\cQ^{z, w}$,
\begin{align*}
h^{-1} \bigl( F( s + h ) - F( s ) \bigr) & = %
\langle u, ( k_{s + h} - k_s )( f, g )%
\bigl( h^{-1} ( \cQ^{z, w}_{t - s - h} - \cQ^{z, w}_{t - s} )( y ) \bigr) v %
\rangle \\
& \qquad + h^{-1} \langle u, ( k_{s + h} - k_s )( f, g )%
\bigl( \cQ^{z, w}_{t - s}( y ) \bigr) v \rangle \\[1ex]
 & \qquad + \langle u, k_s( f, g )\bigl( h^{-1} %
\bigl( \cQ^{z, w}_{t - s - h} - \cQ^{z, w}_{t - s} \bigr) ( y ) %
\bigr) v \rangle \\[1ex]
 & \to 0 + \langle u, k_s( f, g )\bigl( %
\eta_{z, w}( \cQ^{z, w}_{t - s}( y ) )\bigr) v \rangle - %
\langle u, k_s( f, g )%
\bigl( \eta_{z, w}( \cQ^{z, w}_{t - s}( y ) ) \bigr) v \rangle = 0
\end{align*}
as $h \to 0$, so $F$ is constant on $[ t_n, t ]$, being a continuous
function with vanishing right derivative. Hence
\[
k_t( f, g )( y ) = %
\bigl( k_{t_n}( f, g ) \comp \cQ^{z, w}_{t - t_n} \bigr)( y ) %
\qquad \text{for all } y \in \dom \eta_{z, w},
\]
and so for all $y \in \opsp$. Repeating this argument gives that
\begin{equation}\label{eqn:semi1}
k_t( f, g ) = \langle \nvec{f}, \nvec{g} \rangle \, %
\cQ^{z_0, w_0}_{t_1 - t_0} \comp \cdots \comp %
\cQ^{z_n, w_n}_{t - t_n},
\end{equation}
where now $f( t_j ) = z_j$ and $g( t_j ) = w_j$ for $j = 0$, \ldots,
$n$. In particular, for any $z, w \in \tot$,
\[
\cP^{z, w}_s = k_s[ 1_{[0,s)} z, 1_{[0,s)} w ] = %
k_s( 1_{[0,s)} z, 1_{[0,s)} w ) = \exp( -s \chi( z, w ) ) \cQ^{z, w}_s.
\]
This relationship between $\cP^{z, w}$ and $\cQ^{z, w}$, together
with~(\ref{eqn:semi1}), yields
\[
k_t[ f, g ] = \cP^{z_0, w_0}_{t_1 - t_0} %
\comp \cdots \comp \cP^{z_n, w_n}_{t - t_n},
\]
and therefore $k$ is a Markovian cocycle, by
Theorem~\ref{thm:cocycle}.
\end{proof}

\begin{proposition}\label{prp:assocsemigen}
Let $k$ be a Markovian cocycle on $\opsp$ with locally bounded
norm. Suppose there exists an admissible set $\tot$, a norm-dense
subspace $\opsp_0$ of $\opsp$ and a linear map $\psi : \opsp_0 \to
\opsp \matten \bop{\mmul}{}$ such that
\begin{equation}\label{eqn:EHweak}
k_0( x ) = x \otimes I \qquad \text{and} \qquad \rd k_t( x ) = %
( \wt{k}_t \comp \psi ) ( x ) \std \Lambda_t \quad %
\text{ weakly on } \ini \algten \evecs( \tot )
\end{equation}
for all $x \in \opsp_0$. If one of the associated semigroups
$\cP^{z,w}$ of the cocycle is $C_0$ for some choice of $z,w \in \mul$
then all of them are $C_0$ semigroups. Moreover
$\psi^{\wh{z}}_{\wh{w}} := E^{\wh{z}} \psi( \cdot ) E_{\wh{w}} : %
\opsp_0 \to \opsp$ is closable for all $z, w \in \tot$ and the
generator of $\cP^{z,w}$ is an extension of the map
$x \mapsto \psi^{\wh{z}}_{\wh{w}}( x ) -\chi( z, w ) x$.
\end{proposition}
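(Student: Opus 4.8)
The plan is to treat the three claims in turn, in the order in which they appear, since the integral equation underlying the last two is only available at the outset in weak form and needs the $C_0$~property to be promoted. First observe that each $\cP^{z,w}$ is a semigroup on $\opsp$ with locally bounded norm: the normalised exponential vectors $\nvec{1_{[0,t)}z}$ are unit vectors, so $\|\cP^{z,w}_t(x)\| \le \|k_t(x)\| \le \|k_t\|\,\|x\|$, and $t \mapsto \|k_t\|$ is locally bounded. The crucial point for the transfer is the continuity estimate
\[
\|\cP^{z,w}_t(x) - \cP^{z',w'}_t(x)\| \le \|k_t\|\,\|x\|\bigl(\|\nvec{1_{[0,t)}z} - \nvec{1_{[0,t)}z'}\| + \|\nvec{1_{[0,t)}w} - \nvec{1_{[0,t)}w'}\|\bigr),
\]
valid for any $z, z', w, w' \in \mul$ and obtained by removing the exponential vectors one at a time and using $\|E^\zeta - E^{\zeta'}\| = \|\zeta - \zeta'\|$; since $\|\nvec{1_{[0,t)}z} - \nvec{1_{[0,t)}z'}\|^2 = 2\bigl(1 - \Re\exp(-t\chi(z,z'))\bigr) \to 0$ as $t \to 0$, the right-hand side vanishes with $t$. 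Hence if $\cP^{z_0,w_0}$ is a $C_0$~semigroup for one pair $z_0, w_0$, then for every $z, w$ and every $x \in \opsp$ we have $\|\cP^{z,w}_t(x) - x\| \le \|\cP^{z,w}_t(x) - \cP^{z_0,w_0}_t(x)\| + \|\cP^{z_0,w_0}_t(x) - x\| \to 0$, so $\cP^{z,w}$ is a $C_0$~semigroup too.

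For the remaining assertions, fix $z, w \in \tot$ and $x \in \opsp_0$ and feed step functions that are constantly equal to $z$ and to $w$ on a large interval $[0, N)$ into the weak form of the QSDE~(\ref{eqn:EHweak}), reading it at times $t < N$; note that $E^{\wh{z}}\psi(x)E_{\wh{w}} = \psi^{\wh{z}}_{\wh{w}}(x)$ lies in $\opsp$ by the definition of the matrix space $\opsp \matten \bop{\mmul}{}$. Splitting the exponential vectors over $\fock \cong \fock_{s)} \otimes \fock_{[s}$, using adaptedness of $k$ to write $k_s(y) = k_{s)}(y) \otimes I_{[s}$, and evaluating $\langle \nvec{1_{J} z}, \nvec{1_{J} w} \rangle = \exp(-|J|\,\chi(z,w))$ for the surviving intervals — the same bookkeeping as at the start of the proof of Theorem~\ref{thm:intcocycle} — the weak QSDE collapses, after clearing the scalar exponential prefactors, to the identity
\[
\exp\bigl(t\chi(z,w)\bigr)\,\cP^{z,w}_t(x) - x = \int_0^t \exp\bigl(s\chi(z,w)\bigr)\,\cP^{z,w}_s\bigl(\psi^{\wh{z}}_{\wh{w}}(x)\bigr)\std s ,
\]
holding a priori only weakly in $\bop{\ini}{}$. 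Since, by the first part, $\cP^{z,w}$ is a $C_0$~semigroup, $s \mapsto \cP^{z,w}_s(\psi^{\wh{z}}_{\wh{w}}(x))$ is norm-continuous, so the right-hand side is a genuine $\opsp$-valued Bochner integral and the identity holds in norm.

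Dividing this norm identity by $t$ and letting $t \downarrow 0$, continuity of the integrand at $0$ gives $t^{-1}\bigl(\exp(t\chi(z,w))\cP^{z,w}_t(x) - x\bigr) \to \psi^{\wh{z}}_{\wh{w}}(x)$, and hence $t^{-1}\bigl(\cP^{z,w}_t(x) - x\bigr) \to \psi^{\wh{z}}_{\wh{w}}(x) - \chi(z,w)\,x$ for every $x \in \opsp_0$. Thus $\opsp_0$ lies in the domain of the generator of $\cP^{z,w}$, on which the generator acts as $x \mapsto \psi^{\wh{z}}_{\wh{w}}(x) - \chi(z,w)\,x$; being a restriction of a closed operator, this map is closable, and so therefore is $\psi^{\wh{z}}_{\wh{w}}$, which differs from it by the bounded operator $\chi(z,w)\,\id_\opsp$. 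I expect the real work to lie in the second paragraph: the exponential-vector bookkeeping that turns the weak QSDE into the scalar integral equation, together with the point that promoting that equation from its weak to its norm form is legitimate only once the $C_0$~property has been secured — which is precisely why the three steps must be carried out in this order.
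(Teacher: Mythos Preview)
Your proof is correct and follows the same core argument as the paper: derive the integral identity linking $\cP^{z,w}_t(x)-x$ to $\int_0^t \cP^{z,w}_s(\psi^{\wh{z}}_{\wh{w}}(x))\std s$ from the weak QSDE, upgrade it to a Bochner integral once the $C_0$~property is in hand, and differentiate at $0$ to locate $\opsp_0$ inside the generator's domain. Your integral identity is just the paper's equation~\eqref{eqn:assocsemigp} multiplied through by $e^{t\chi(z,w)}$.

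The one genuine difference is in the first claim. The paper simply invokes \cite[Proposition~5.4]{LiW00b} for the ``all or none'' $C_0$~property of the associated semigroups, whereas you give a direct, self-contained argument via the estimate $\|\cP^{z,w}_t(x) - \cP^{z',w'}_t(x)\| \le \|k_t\|\,\|x\|\bigl(\|\nvec{1_{[0,t)}z} - \nvec{1_{[0,t)}z'}\| + \|\nvec{1_{[0,t)}w} - \nvec{1_{[0,t)}w'}\|\bigr)$. This is a nice elementary replacement for the citation; it exploits only the local boundedness of $\|k_t\|$ and the elementary fact that normalised exponential vectors with short supports converge to the vacuum. You also spell out the closability argument (restriction of a closed operator plus bounded perturbation), which the paper leaves implicit.
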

\begin{proof}
Let $t > 0$ and $x \in \opsp_0$. Since $k$ satisfies the
QSDE~(\ref{eqn:EHweak}) weakly, it follows that
\begin{equation}\label{eqn:assocsemigp}
t^{-1} \bigl( \cP^{z,w}_t( x ) -x ) = t^{-1} \biggl( \int^t_0 e^{(s-t)
\chi( z, w )} \cP^{z, w}_s \bigl( \psi^{\wh{z}}_{\wh{w}}( x )
\bigr) \std s +(e^{-t \chi( z, w )} -1) x \biggr)
\end{equation}
in the weak operator sense. However it is known
\cite[Proposition~5.4]{LiW00b} that either all or none of the
associated semigroups are $C_0$; if they do all have this property
then the integrand is a norm-continuous map from $\R_+$ to $\opsp$, so
equation~(\ref{eqn:assocsemigp}) in fact makes sense as a Bochner
integral. Note then that the limit of the right-hand side exists as
$t \to 0$, showing that $\opsp_0$ is contained in the domain of
the generator of $\cP^{z, w}$, whose action on $\opsp_0$ is as claimed.
\end{proof}

\section{Feynman--Kac perturbation}\label{sec:FK}

\subsection{The free flow}

\begin{definition}\label{dfn:qsflow}
A \emph{quantum stochastic flow} is a Markovian cocycle $j$
on a unital $C^*$~algebra $\alg \subseteq \bop{\ini}{}$ such that each
$j_t$ is a unital $*$-homomorphism and the mapping process $j$ is
strongly continuous.
\end{definition}

\begin{remark}
Note that in this paper we insist that a flow be a cocycle but make no
requirement that it solve a QSDE. This is at somewhat at odds with the
terminology adopted elsewhere.
\end{remark}

\begin{definition}\label{dfn:standard}
Let $j$ be a quantum stochastic flow on the unital $C^*$ algebra
$\alg$ and let $\tot$ be an admissible set. Suppose that
\begin{equation}\label{eqn:flowqsde}
\rd j_t( x ) = %
( \wt{\jmath}_t \comp \phi )( x ) \std \Lambda_t %
\qquad \text{weakly on } \ini \algten \evecs( \tot ) %
\qquad \text{for all } x \in \alg_0,
\end{equation}
where $\alg_0$ is a subspace of $\alg$ and
$\phi : \alg_0 \to \alg \matten \bop{\mmul}{}$. The map $\phi$, called
the \emph{generator} of the flow~$j$, has \emph{standard form}
if~$\alg_0$ is a norm-dense $*$-algebra containing the multiplicative
unit $1_\alg$ and $\phi$ is a linear map with the block-matrix
decomposition
\begin{equation}\label{eqn:blockgenform}
\phi = \begin{bmatrix} \tau & \delta^\dagger \\[1ex]
 \delta & \pi - \iota \end{bmatrix} : \alg_0 \to \begin{bmatrix}
 \alg & \alg \matten \bra{\mul} \\[1ex]
 \alg \matten \ket{\mul} & \alg \matten \bop{\mul}{}
\end{bmatrix},
\end{equation}
where
\begin{mylist}
\item[(i)]  $\tau : \alg_0 \to \alg$ is a $*$-linear map such that
\[
\tau( x y ) - \tau( x ) y - x \tau( y ) = \delta^\dagger( x ) %
\delta( y )
\qquad \text{for all } x, y \in \alg_0,
\]
\item[(ii)] $\delta : \alg_0 \to \alg \matten \ket{\mul}{}$ is a
$\pi$-derivation, so that
\[
\delta( x y ) = \delta( x ) y + \pi( x ) \delta( y ) %
\qquad \text{for all } x, y \in \alg_0,
\]
\item[(iii)] $\delta^\dagger : \alg_0 \to \alg \matten \bra{\mul}{}$
is such that $\delta^\dagger( x ) = \delta( x^* )^*$ for all
$x \in \alg_0$, and
\item[(iv)] 
$\pi : \alg_0 \to \alg \matten \bop{\mul}{}$ is a
unital $*$-homomorphism and
$\iota : \alg_0 \to \alg \matten \bop{\mul}{}$ is the ampliation map
$x \mapsto x \otimes I_\mul$.
\end{mylist}
\end{definition}

\begin{remark}
Suppose (\ref{eqn:flowqsde}) holds with $\alg_0 = \alg$.
\begin{mylist}
\item[(i)] If the multiplicity space $\mul$ is finite dimensional and
$\phi$ is bounded, then $j$ satisfies~(\ref{eqn:flowqsde}) strongly
and $\phi$ has standard form \cite[Theorem~5.1(d)]{LiP98}.

\item[(ii)] More generally, if there exists an orthonormal basis
$\{ e_\alpha : \alpha \in A \}$ of $\mmul$ that contains the vector $( 1, 0 )$
and is such that
\[
\alg \to \bop{\ini}{}; \ x \mapsto E^{e_\alpha} \phi( x ) E_{e_\beta}
\]
is a bounded map for all $\alpha$, $\beta \in A$ then $\phi$ is
completely bounded \cite[Proposition~5.1 and Theorem~5.2]{LiW00a} and
has standard form
\cite[Proposition~3.2, Proposition~6.3 and Theorem~6.5]{LiW00a}.
\end{mylist}
\end{remark}

\begin{example}\label{ex: gauge free+bdd}
Suppose $\alg$ is a unital $C^*$~algebra, let
$t \in \alg \matten \ket{\mul}$ be such that
$t^* ( x \otimes I_\mul) t \in \alg$ for all $x \in \alg$
and let $h = h^* \in \alg$. The map
\begin{equation}\label{eqn: gauge free block}
\phi = \begin{bmatrix}
 \tau & \delta^\dagger \\
 \delta & 0
\end{bmatrix}
\end{equation}
has standard form, where 
\begin{align*}
\delta( x ) & = ( x \otimes I_\mul ) t - t x \\[1ex]
\text{and} \quad \tau( x ) & = \I [ h, x ] - %
\hlf t^* t x + t^* ( x \otimes I_\mul ) t - \hlf x t^* t %
\qquad \textrm{for all } x \in \alg.
\end{align*}
\end{example}

\begin{example}\label{ex: gauge free skew derivs}
Let $\alg$ be a unital $C^*$~algebra
with a norm-dense $*$-subalgebra $\alg_0 \subseteq \alg$
containing the multiplicative unit
$1_\alg$. For $i = 1$, \ldots, $n$, let $c_i \in \C$ and let
$\delta_i : \alg_0 \to \alg$ be a \emph{skew-symmetric} derivation, so
that $\delta_i^\dagger = -\delta_i$.
With $\{e_1, \ldots, e_n\}$ the standard orthonormal basis of
$\mul = \C^n$,
\[
\delta( x ) := \sum_{i = 1}^n c_i \delta_i( x ) \otimes \ket{e_i} %
\qquad \text{and} \qquad %
\tau( x ) := -\frac{1}{2} \sum_{i = 1}^n |c_i|^2 \delta_i^2( x ) \qquad
\text{for all } x \in \alg_0,
\]
the map $\phi$ given by~(\ref{eqn: gauge free block}) has standard form.
\end{example}

\begin{remark}
The challenge is to extend these paradigmatic examples. The map~$\phi$ defined in Example~\ref{ex: gauge free+bdd} is completely
bounded, so the QSDE~(\ref{eqn:flowqsde}) can be solved \cite{LiW01}
and the solution is a strong one. It is natural to ask what happens when $\tau$
is no longer bounded, for instance when
the derivations $\delta_1$, \ldots, $\delta_n$ in
Example~\ref{ex: gauge free skew derivs} are unbounded. Can we still solve~(\ref{eqn:flowqsde})? These challenges are addressed in
Section~\ref{sec:egs}.
\end{remark}

\subsection{The multiplier equation}
\label{sec:multeqn}

\begin{definition}
A bounded operator process $X$ is a \emph{right multiplier cocycle}
for the quantum stochastic flow $j$ \cite[Definition~2.2]{BLS13} if
\[
X_{s + t} = J_s( X_t ) X_s \qquad \text{for all } s, t \in \R_+,
\]
where $J_s := \wh{\jmath}_s \comp \sigma_s$ as in
Proposition~\ref{prp:cosemi}.
\end{definition}

\begin{proposition}\label{prp:multiplier1}
Let $j$ be a quantum stochastic flow on the unital $C^*$~algebra
$\alg$. Suppose $k$ is a Markovian cocycle on $\alg$ and $Y$ is a
bounded operator process such that
\[
k_t( x ) = j_t( x ) Y_t %
\qquad \text{for all } t \in \R_+ \text{ and } x \in \alg.
\]
Then $Y$ is a right multiplier cocycle for the flow $j$.
\end{proposition}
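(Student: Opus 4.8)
The plan is to exploit the semigroup reformulation of cocycles from Proposition~\ref{prp:cosemi}. We are given two Markovian cocycles, $j$ and $k$, related by $k_t(x) = j_t(x) Y_t$, and we must show $Y$ is a right multiplier cocycle, i.e. $Y_{s+t} = J_s(Y_t) Y_s$ for all $s$, $t \in \R_+$, where $J_s = \wh{\jmath}_s \comp \sigma_s$. First I would record the key structural facts: since each $j_t$ is a unital $*$-homomorphism, $j_t(1_\alg) = 1_\alg \otimes I_\fock$, so evaluating $k_t(x) = j_t(x) Y_t$ at $x = 1_\alg$ recovers $Y_t = k_t(1_\alg)$; in particular $Y$ is adapted with $Y_t = Y_{t)} \otimes I_{[t}$. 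This also shows $Y_t \in \alg \matten \bop{\fock}{}$, a fact I will need below so that $J_s(Y_t)$ makes sense.

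The main step is to apply the cocycle identity~(iii)$'$ from Remark~\ref{rem:markovian}, namely $k_{s+t} = \wh{k}_s \comp \sigma_s \comp k_t$, and likewise $j_{s+t} = \wh{\jmath}_s \comp \sigma_s \comp j_t$. Fix $s$, $t$ and apply both sides to $1_\alg$: on the $k$ side this gives $Y_{s+t} = k_{s+t}(1_\alg) = (\wh{k}_s \comp \sigma_s \comp k_t)(1_\alg) = (\wh{k}_s \comp \sigma_s)(Y_t)$. Now I would substitute $k_s(\cdot) = j_s(\cdot) Y_s$ into the lifted map $\wh{k}_s = k_{s)} \matten \id_{\bop{\fock_{[s}}{}}$. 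The point is that $\wh{k}_s$ is the matrix-space lifting of $k_{s)}$, and $k_{s)}(x) = j_{s)}(x) Y_{s)}$ for all $x \in \alg$. Since right multiplication by the fixed operator $Y_{s)} \in \bop{\ini \otimes \fock_{s)}}{}$ is an ampliable, completely bounded map and $j_{s)}$ is a $*$-homomorphism, the lifting should factor as $\wh{k}_s(T) = \wh{\jmath}_s(T)\,(Y_{s)} \otimes I_{[s})$ for $T \in \alg \matten \bop{\fock}{}$; this is the identity I must verify carefully by testing against $E^z \cdot E_w$ for $z$, $w \in \fock$, using the defining property of the lifting. Granting this, $(\wh{k}_s \comp \sigma_s)(Y_t) = \wh{\jmath}_s(\sigma_s(Y_t))\,(Y_{s)} \otimes I_{[s}) = J_s(Y_t)\, Y_s$, which is exactly the multiplier cocycle identity.

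The main obstacle I anticipate is precisely that factorisation of the lifting $\wh{k}_s$ through the lifting $\wh{\jmath}_s$ and right multiplication by $Y_s$: because matrix-space liftings of $*$-homomorphisms need not be multiplicative (as the introduction warns), one cannot simply say $\wh{k}_s = \wh{\jmath}_s(\cdot)\, \wh{Y}_s$ and appeal to multiplicativity. Instead I would argue at the level of the slices: for $T \in \alg \matten \bop{\fock}{}$ and $z$, $w \in \fock$, write $E^z \wh{k}_s(T) E_w = k_{s)}(E^z T E_w) = j_{s)}(E^z T E_w)\, Y_{s)}$, and then re-expand $j_{s)}(E^z T E_w) = E^z \wh{\jmath}_s(T) E_w$ using the lifting property for $\wh{\jmath}_s$, absorbing the (constant-in-the-$\fock_{[s}$-variable) operator $Y_{s)}$ through the column and row maps. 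Adaptedness of $Y$ — that $Y_s$ acts only on $\fock_{s)}$ — is what makes this absorption legitimate. Once the slice identity holds for all $z$, $w$, it determines $\wh{k}_s(T)$ uniquely and yields the factorisation, after which the computation above closes the proof.
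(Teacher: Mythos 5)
Your proposal is correct and follows essentially the same route as the paper: identify $Y_t = k_t(1_\alg)$, establish the factorisation $\wh{k}_s(R) = \wh{\jmath}_s(R)\,Y_s$ for $R \in \alg \matten \bop{\fock_{[s}}{}$ by slicing with $E^z$, $E_w$ (for $z$, $w$ ranging over $\fock_{[s}$, not all of $\fock$ as you briefly wrote), and then evaluate the cocycle identity $k_{s+t} = \wh{k}_s \comp \sigma_s \comp k_t$ at the unit. Your careful handling of the non-multiplicativity of liftings via the slice argument is exactly the content the paper compresses into its ``Consequently we have that'' step.
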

\begin{proof}
Note first that
\[
k_{t)}( x ) = j_{t)}( x ) Y_{t)} \quad \text{and} \quad
Y_t E_{\nvec{g}} = E_{\nvec{g_{[t}}} Y_{t)} E_{\nvec{g_{t)}}}
\qquad \text{for all } t \in \R_+ \text{and} x \in \alg,
\]
using the notation of
Definitions~\ref{dfn:opproc} and~\ref{dfn:cbmp}. Consequently we have that
$\wh{k}_t( R ) = \wh{\jmath}_t( R ) Y_t$ for all
$R \in \alg \matten \bop{\fock_{[t}}{}$, thus
$K_t( T ) = J_t( T ) Y_t$ for all $t \in \R_+$ and
$T \in \alg \matten \bop{\fock}{}$ in the notation of
Remark~\ref{rem:markovian} and Proposition~\ref{prp:cosemi}. Hence
\[
Y_{s + t} = k_{s + t}( I_\ini ) = %
( \wh{k}_s \comp \sigma_s \comp k_t )( I_\ini ) = %
K_s( Y_t ) = J_s( Y_t ) Y_s \qquad \text{for all } s, t \in \R_+.
\qedhere
\]
\end{proof}

\begin{remark}
Proposition~\ref{prp:multiplier1} reverses the reasoning used in the
von~Neumann-algebraic context, where it is shown that $k$ is a
Markovian cocycle if $Y$ is a suitable right multiplier cocycle
\cite[Proposition~2.5]{BLS13}. The obstruction in the $C^*$~setting
appears when attempting to show that~$j_t( a ) Y_t$ lies
in~$\alg \matten \bop{\fock}{}$: see Remark~\ref{rmk:notalg}.
\end{remark}

\begin{definition}\label{dfn:integrable}
Given an admissible set $\tot$, an \emph{integrable process} $F$ is a
collection of linear operators $( F_t )_{t \in \R_+}$ such that
\begin{mylist}
\item[(i)]
$\ini \algten \evecs( \tot ) \algten \mmul \subseteq \dom F_t %
\subseteq \ini \otimes \fock \otimes \mmul$ and
$\im F_t \subseteq \ini \otimes \fock \otimes \mmul$
for all $t \in \R_+$,
\item[(ii)]
$E^{\nvec{f} \otimes \xi} F_t E_{\nvec{g} \otimes \eta} = %
\langle \nvec{1_{[ t, \infty )} f}, \nvec{1_{[ t, \infty )} g} \rangle %
E^{\nvec{1_{[ 0, t )} f} \otimes \xi} F_t %
E_{\nvec{1_{[ 0, t )} g} \otimes \eta}$
for almost all $t \in \R_+$, for all $f$, $g \in \step$ and
$\xi$, $\eta \in \mmul$,
\item[(iii)] $t \mapsto F_t \xi$ is strongly measurable for all
  $\xi \in \ini \algten \evecs( \tot ) \algten \mmul$,
\item[(iv)] $t \mapsto \Delta^\perp_{\ini \otimes \fock} F_t %
\bigl( u \otimes \nvec{f} \otimes \wh{f( t )} \bigr)$
is locally integrable for all $u \in \ini$ and $f \in \step$, and
\item[(v)] $t \mapsto \Delta_{\ini \otimes \fock} F_t %
\bigl( u \otimes \nvec{f} \otimes \wh{f( t )} \bigr)$
is locally square-integrable for all $u \in \ini$ and $f \in \step$.
\end{mylist}
The integrable process $F$ is \emph{bounded} if
$F_t \in \bop{\ini \otimes \fock \otimes \mmul}{}$ for
all~$t \in \R_+$.

Given such an integrable process $F$, the quantum stochastic integral
$( \int_0^t F_s \std \Lambda_s )_{t \in \R_+}$ is the unique operator
process such that
\[
\langle u \otimes \nvec{f}, \int_0^t F_s \std \Lambda_s %
v \otimes \nvec{g} \rangle = % 
\int_0^t \langle u \otimes \nvec{f}, E^{\wh{f( s )}} F_s %
E_{\wh{g( s )}} v \otimes \nvec{g} \rangle \std s
\]
for all $t \in \R_+$, $u$, $v \in \ini$ and $f$, $g \in \step$. Such a
process is necessarily strongly continuous. For details see
Theorem~3.13 of \cite{Lin05}.
\end{definition}

\begin{remark}
\begin{mylist}
\item[(i)] Definition~\ref{dfn:integrable}(iii) implies that a
bounded integrable process~$F$ is strongly measurable everywhere on
$\ini \otimes \fock \otimes \mmul$, that is, the map
$t \mapsto F_t \theta$ is strongly measurable for all
$\theta \in \ini \otimes \fock \otimes \mmul$.
\item[(ii)] If $X$ is a strongly measurable bounded operator process
on $\ini$ which has locally bounded norm then
$\wt{X} := ( X_t \otimes I_\mmul )_{t \in \R_+}$ is a bounded
integrable process with locally bounded norm.
\end{mylist}
\end{remark}

\begin{lemma}\label{lem:timedptqsde}
Let $R \in \bop{\ini}{}$ and let $L$ be a bounded integrable process with locally bounded norm. There is a unique strong solution to the
QSDE
\begin{equation}\label{eqn:timedptQSDE}
X_0 = R \otimes I_\fock \qquad \text{and} \qquad %
\rd X_t = L_t \wt{X}_t \std \Lambda_t,
\end{equation}
where $\wt{X_t} := X_t \algten I_\mmul$. The solution $X$ is a
strongly continuous operator process and $\wt{X}$ is an
integrable process such that $t \mapsto X_t E_{\nvec{f}}$ has locally
bounded norm for every $f \in \step$.
\end{lemma}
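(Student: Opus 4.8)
The plan is to reduce the QSDE~(\ref{eqn:timedptQSDE}) to the standard existence-and-uniqueness theory for quantum stochastic integral equations, in the form established in \cite{Lin05} (in particular the construction of the stochastic integral recalled in Definition~\ref{dfn:integrable}), and then to read off the additional regularity properties from that construction. First I would set up a Picard iteration on the ampliated level. Starting from $X^{(0)}_t := R \otimes I_\fock$ and defining
\[
X^{(n+1)}_t := R \otimes I_\fock + \int_0^t L_s \wt{X^{(n)}_s} \std \Lambda_s,
\]
one checks inductively that each $\wt{X^{(n)}} = ( X^{(n)}_t \otimes I_\mmul )_{t \in \R_+}$ is a bounded integrable process with locally bounded norm: boundedness and local norm bounds propagate because $L$ has locally bounded norm and because the quantum stochastic integral of a bounded integrable process against $\rd\Lambda$ enjoys the usual first-fundamental-formula estimates (a fixed-time bound on $\| \bigl(\int_0^t F_s\,\rd\Lambda_s\bigr) E_{\nvec g}\|$ in terms of $\sup_{s\le t}\|F_s\|$, $\|g\|$ and $t$), while adaptedness and the measurability hypotheses~(iii)--(v) of Definition~\ref{dfn:integrable} are inherited from $L$ and from the structure of the integrand $L_s \wt{X^{(n)}_s}$, noting that $\Delta^\perp L_s \wt{X^{(n)}_s} = \Delta^\perp L_s (X^{(n)}_s\otimes I_\mmul)$ and similarly with $\Delta$, so the local integrability/square-integrability of the slices follows from those of $L$ together with the local norm bound on $X^{(n)}$.

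Next I would establish convergence of the iteration. On each compact interval $[0,T]$ and for each fixed $f \in \step$, the differences $\xi^{(n)}_t := (X^{(n+1)}_t - X^{(n)}_t) E_{\nvec f}$ satisfy an estimate of the form
\[
\| \xi^{(n+1)}_t \|^2 \le C_T \int_0^t \| \xi^{(n)}_s \|^2 \std s,
\]
with $C_T$ depending on $\sup_{s\le T}\|L_s\|$, $\|f\|$ and $T$; iterating gives the familiar $C_T^n t^n / n!$ bound, whence $(X^{(n)}_t E_{\nvec f})_n$ converges uniformly on $[0,T]$, and the limit $X_t E_{\nvec f}$ is independent of the truncation. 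Totality of $\{\evec f : f\in\step\}$ in $\fock$ lets one define $X_t$ on $\ini\algten\evecs(\tot)$; the uniform-in-$t$ local norm bounds inherited from the iterates show $X_t$ extends to a bounded operator with locally bounded norm, and that $t\mapsto X_t E_{\nvec f}$ has locally bounded norm, as required. Passing to the limit in the integral equation — justified by dominated convergence in the defining formula for the stochastic integral from Definition~\ref{dfn:integrable}, using the local norm bounds — shows $X$ solves~(\ref{eqn:timedptQSDE}) strongly, and that $\wt X$ is a bounded integrable process; strong continuity of $X$ is then automatic since any quantum stochastic integral process is strongly continuous (the last assertion of Definition~\ref{dfn:integrable}), and $X_t - R\otimes I_\fock$ is such an integral. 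Adaptedness of $X$ follows because it is preserved at each stage of the iteration and under limits.

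For uniqueness, if $X$ and $X'$ are two strong solutions then their difference $D_t := X_t - X'_t$ satisfies $D_0 = 0$ and $\rd D_t = L_t \wt{D_t}\std\Lambda_t$; applying the same Grönwall-type estimate to $\|D_t E_{\nvec f}\|^2 \le C_T\int_0^t \|D_s E_{\nvec f}\|^2\std s$ forces $D_t E_{\nvec f} = 0$ for all $f\in\step$ and all $t$, hence $D_t = 0$ on the dense domain $\ini\algten\evecs(\tot)$, and $D_t = 0$ by boundedness. The main obstacle is the bookkeeping in the first paragraph: one must verify carefully that the integrand $L_s\wt{X^{(n)}_s}$ genuinely satisfies \emph{all} of conditions~(i)--(v) of Definition~\ref{dfn:integrable} at each iteration step — in particular that multiplying the bounded integrable process $L$ on the right by the ampliated adapted process $\wt{X^{(n)}}$ preserves the adaptedness identity~(ii) and the strong measurability~(iii), and that the $\Delta$- and $\Delta^\perp$-slices retain their local (square-)integrability — since it is precisely the interplay between adaptedness of $X^{(n)}$ and the structure of $L$ that makes the product again a legitimate integrand; everything else is the standard Picard/Grönwall machinery.
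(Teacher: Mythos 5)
Your overall strategy --- Picard iteration $X^{(n+1)}_t = R\otimes I_\fock + \int_0^t L_s\wt{X}^{(n)}_s\std\Lambda_s$, the fundamental estimate from \cite[Theorem~3.13]{Lin05} applied to the differences $(X^{(n+1)}_t - X^{(n)}_t)E_{\nvec{f}}$, the $C_T^n/n!$ bound, and uniqueness by Gr\"onwall on the difference of two solutions --- is exactly the paper's argument (which in turn defers to Proposition~3.1 of \cite{GLW01}). However, there is a genuine error in your handling of boundedness, in two places. First, your inductive claim that each $\wt{X}^{(n)}$ is a \emph{bounded} integrable process with locally bounded norm is false in general: the fundamental estimate bounds $\|\bigl(\int_0^t F_s\std\Lambda_s\bigr)u\otimes\evec{g}\|$ by a constant depending on $g$ (through $\|1_{[0,t]}\wh{g}\|$ and the exponential-vector norms), and there is no $g$-uniform bound, so the quantum stochastic integral of a bounded integrable process need not be a bounded operator --- already $X^{(1)}_t$ can fail to be bounded. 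Consequently you cannot invoke the remark ``$X$ bounded with locally bounded norm $\Rightarrow$ $\wt{X}$ is a bounded integrable process'' to justify the existence of the next iterate. Second, and for the same reason, your conclusion that the limit $X_t$ ``extends to a bounded operator with locally bounded norm'' does not follow: your bounds are $f$-dependent, giving local boundedness of $t\mapsto X_t E_{\nvec{f}}$ for each $f\in\step$ separately, which is precisely (and only) what the lemma asserts. Boundedness of the solution is genuinely extra information, which is why Proposition~\ref{prp:multiplier} needs the additional hypotheses $q(F)\le 0$ to get contractivity.

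The fix is the one the paper uses: do not ask the iterates to be bounded. Each $X^{(n)}$ is a strongly continuous operator process defined on $\ini\algten\evecs(\tot)$, and the norm continuity of $t\mapsto\wt{X}^{(n)}_t\xi$ for $\xi\in\ini\algten\evecs(\tot)\algten\mmul$, combined with the boundedness and local norm bounds on $L$ and its adaptedness, is already enough to verify that $s\mapsto L_s\wt{X}^{(n)}_s$ satisfies conditions (i)--(v) of Definition~\ref{dfn:integrable} --- that is your ``main obstacle'' paragraph, correctly identified, but it must be carried out for integrable processes in this unbounded sense. The Gr\"onwall machinery you describe then goes through unchanged, since all of your estimates are on the $E_{\nvec{f}}$-slices $Y^{(n)}_t u$ rather than on operator norms over $\ini\otimes\fock$.
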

\begin{proof}
This is essentially Proposition~3.1 of~\cite{GLW01}, rewritten in the
notation and language of~\cite{Lin05}. In particular $X$ is found by
solving the iteration scheme
\[
X^{(0)}_t = R \otimes I_\fock,
\qquad
X^{(n+1)}_t = R \otimes I_\fock +\int^t_0 L_s \wt{X}^{(n)}_s \std
\Lambda_s, \quad t \in \R_+, n \in \Z_+
\]
where $\wt{X}^{(n)}_t := X^{(n)}_t \algten I_\mmul$. The norm continuity of
$t \mapsto \wt{X}^{(n)}_t \xi$
for all $\xi \in \ini \algten \evecs( \tot ) \algten \mmul$
implies that~$s \mapsto L_s \wt{X}^{(n)}_s$ is an integrable process, giving
the existence of $X^{(n+1)}$. We fix $f \in \step$ and let
\[
Y^{(0)}_t := ( R \otimes I_\fock ) E_{\nvec{f}} \qquad \text{and} \qquad
Y^{(n)}_t := (X^{(n+1)}_t - X^{(n)}_t) E_{\nvec{f}}.
\]
Given $T > 0$ and applying~\cite[Theorem~3.13]{Lin05},
it follows by induction that
$Y^{(n)}_t \in \bop{\ini}{\ini \otimes \fock}$ for all $n \in \N$,
with
\[
\sup_{t \in [0,T]} \| Y^{(n)}_t u \|^2 \le K_{f,T} \int^t_0 %
\| L_s \|^2 \| \wh{f}( s ) \|^2 \| Y^{(n-1)}_s u \|^2 \std s,
\]
where $K_{f, T}$ is a constant that depends only on $f$ and $T$. Consequently,
we have that
\[
\sup_{t \in [0,T]} \| Y^{(n)}_t u \| \le \frac{1}{\sqrt{n!}} %
K^{n/2}_{f,T} M^n_T \| 1_{[0,T]} \wh{f} \|^n \| R \| %
\| \nvec{f} \| \| u \|,
\]
where $M_T = \sup_{t \in [0,T]} \| L_t \|$. Setting
$X_t u \otimes \nvec{f} = \sum_{n=0}^\infty Y^{(n)}_t u$ for all
$u \in \ini$ and $t \in \R_+$ gives a process~$X$. It is now a routine matter to check
that $X$ is strongly continuous, and integrability of $\wt{X}$ then
follows, in part because of the manifestly locally bounded norm of
$t \mapsto X_t E_{\nvec{f}}$. Thus~$X$ can be shown to
satisfy~(\ref{eqn:timedptQSDE}). Uniqueness follows by taking the
difference of two solutions and iterating as above.
\end{proof}

\begin{proposition}\label{prp:multiplier}
Let $j$ be a completely bounded mapping process on the operator space $\opsp$, with locally bounded completely bounded norm. If
$F \in \opsp \matten \bop{\mmul}{}$ is such that
$t \mapsto \wt{\jmath}_t( F )$ is strongly measurable then there is a
unique strong solution to the QSDE
\begin{equation}\label{eqn:multqsde}
X_0 = I_{\ini \otimes \fock} %
\qquad \text{and} \qquad %
\rd X_t = \wt{\jmath}_t( F ) \wt{X}_t \std \Lambda_t. %
\end{equation}
The solution $X$ is a strongly continuous process. If $\opsp = \alg$, a
unital $C^*$~algebra, with each $j_t$ being a $*$-homomorphism and
such that
\begin{equation}\label{eqn:jFDeltaF}
\wt{\jmath}_t( F )^* \Delta_{\ini \otimes \fock} \wt{\jmath}_t( F )%
= \wt{\jmath}_t( F^* \Delta_\ini F ),
\end{equation}
then $X$ is contractive if and only if
\[
q( F ) := F + F^* + F^* \Delta_\ini F \le 0
\]
and $X$ is isometric if and only if $q( F ) = 0$.
\end{proposition}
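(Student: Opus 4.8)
The plan is to deduce existence, uniqueness and strong continuity of $X$ from Lemma~\ref{lem:timedptqsde}, and then to obtain the contractivity and isometry criteria from the quantum It\^o product formula. To apply Lemma~\ref{lem:timedptqsde} one takes $R := I_\ini$ and $L_t := \wt{\jmath}_t( F )$, and checks that $L = ( L_t )_{t \in \R_+}$ is a bounded integrable process with locally bounded norm. Local boundedness of the norm follows from $\| \wt{\jmath}_t( F ) \| \le \| \wt{\jmath}_t \|_\cb \| F \| = \| j_t \|_\cb \| F \|$ and the hypothesis that $t \mapsto \| j_t \|_\cb$ is locally bounded; strong measurability of $t \mapsto L_t \xi$ is precisely the hypothesis on $t \mapsto \wt{\jmath}_t( F )$; the adaptedness condition of Definition~\ref{dfn:integrable} follows from the adaptedness of $j$, which gives $j_t = ( \,\cdot\, \otimes I_{[t} ) \comp j_{t)}$, together with the compatibility~\eqref{eqn:liftident} of matrix-space liftings, whence $\wt{\jmath}_t( F ) = \wt{\jmath}_{t)}( F ) \otimes I_{[t}$; and the remaining local (square-)integrability conditions hold because $\wh{f( t )}$ assumes only finitely many values for $f \in \step$, so the maps in question are finite linear combinations of the strongly measurable, locally norm-bounded maps $t \mapsto L_t( u \otimes \nvec{f} \otimes \xi )$. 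Lemma~\ref{lem:timedptqsde} then furnishes the unique strong solution $X$ of~\eqref{eqn:multqsde}, its strong continuity, and the integrability of $\wt{X}$.

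For the norm criteria, put $M_t := X_t - I_{\ini \otimes \fock} = \int_0^t L_s \wt{X}_s \std \Lambda_s$, so that $X_t^* X_t - I = M_t + M_t^* + M_t^* M_t$, and apply the adjoint property and the quantum It\^o product formula for quantum stochastic integrals (as developed in~\cite{Lin05}). Writing $\Delta := \Delta_{\ini \otimes \fock}$ and using $\wt{M}_s = \wt{X}_s - I$, the contributions $L_s \wt{X}_s$ and $\wt{X}_s^* L_s^*$ coming from $M_t + M_t^*$ cancel the matching terms produced by $M_t^* M_t$, and one is left with
\begin{equation}\label{eq:XstarX}
X_t^* X_t - I = \int_0^t \wt{X}_s^* \bigl( L_s + L_s^* + L_s^* \Delta L_s \bigr) \wt{X}_s \std \Lambda_s .
\end{equation}
Since each $j_s$ is a unital $*$-homomorphism it is completely positive and $*$-linear, so by Lemma~\ref{lem:cplifting} the lifting $\wt{\jmath}_s = j_s \matten \id_{\bop{\mmul}{}}$ is completely positive, and it is plainly $*$-linear; hence $L_s^* = \wt{\jmath}_s( F )^* = \wt{\jmath}_s( F^* )$, and — using linearity of $\wt{\jmath}_s$ and hypothesis~\eqref{eqn:jFDeltaF}, which in particular presupposes that $F^* \Delta_\ini F \in \alg \matten \bop{\mmul}{}$ — the integrand in~\eqref{eq:XstarX} equals $\wt{X}_s^* \, \wt{\jmath}_s\bigl( F + F^* + F^* \Delta_\ini F \bigr) \wt{X}_s = \wt{X}_s^* \, \wt{\jmath}_s( q( F ) ) \, \wt{X}_s$. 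Testing~\eqref{eq:XstarX} against $u \otimes \nvec{f}$ for $f \in \step$, via the defining integral formula for quantum stochastic integrals and the identity $\wt{X}_s E_{\wh{f( s )}}( u \otimes \nvec{f} ) = \bigl( X_s( u \otimes \nvec{f} ) \bigr) \otimes \wh{f( s )} =: \theta_s$, produces
\[
\bigl\langle u \otimes \nvec{f}, ( X_t^* X_t - I )( u \otimes \nvec{f} ) \bigr\rangle = \int_0^t \langle \theta_s, \wt{\jmath}_s( q( F ) ) \theta_s \rangle \std s .
\]

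If $q( F ) \le 0$, then $\wt{\jmath}_s( q( F ) ) \le 0$ for every $s$ by positivity of the lifting, so the last integrand is nonpositive; as the vectors $u \otimes \nvec{f}$ span the dense subspace $\ini \algten \evecs( \tot )$ of $\ini \otimes \fock$ and $X_t^* X_t - I$ is bounded, it follows that $X_t^* X_t \le I$, i.e.\ $X$ is contractive. If $q( F ) = 0$, then the right-hand side of~\eqref{eq:XstarX} vanishes identically, so $X_t^* X_t = I$ and $X$ is isometric. For the converses, divide the last display by $t$ and let $t \downarrow 0$: strong continuity of $X$ gives $X_s( u \otimes \nvec{f} ) \to u \otimes \nvec{f}$, right-continuity of $f \in \step$ gives $\wh{f( s )} = \wh{f( 0 )}$ for small $s$, and weak continuity at $0$ of $s \mapsto \wt{\jmath}_s( q( F ) )$ — which, via $E^{\wh z} \wt{\jmath}_s( G ) E_{\wh w} = j_s( E^{\wh z} G E_{\wh w} )$, reduces to weak continuity at $0$ of $s \mapsto j_s( a )$ for $a \in \alg$, available when $j$ is strongly continuous (with $j_0$ the ampliation $x \mapsto x \otimes I_\fock$) — together force $\langle \theta_0, \wt{\jmath}_0( q( F ) ) \theta_0 \rangle \le 0$ (resp.\ $= 0$), where $\theta_0 = ( u \otimes \nvec{f} ) \otimes \wh{f( 0 )}$. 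Under the flip $\ini \otimes \fock \otimes \mmul \cong \ini \otimes \mmul \otimes \fock$ the operator $\wt{\jmath}_0( q( F ) )$ is simply $q( F ) \otimes I_\fock$; repeating the computation with $\sum_k u_k \otimes \nvec{f_k}$ in place of $u \otimes \nvec{f}$ and letting the $f_k$ concentrate near the origin — so that $\langle \nvec{f_k}, \nvec{f_l} \rangle \to 1$ while $\wh{f_k( 0 )}$ runs over arbitrary $\wh{z_k}$ with $z_k \in \tot$ — yields $\langle \zeta, q( F ) \zeta \rangle \le 0$ (resp.\ $= 0$) for $\zeta$ in the dense span of $\{ u \otimes \wh z : u \in \ini, z \in \tot \}$, whence $q( F ) \le 0$ (resp.\ $q( F ) = 0$).

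The main obstacle is the It\^o computation: one must justify the adjoint and product formulae in the matrix-space setting with these time-dependent coefficients and, because the liftings $\wt{\jmath}_s$ are not multiplicative, handle the It\^o correction term $L_s^* \Delta L_s = \wt{\jmath}_s( F )^* \Delta \wt{\jmath}_s( F )$, which \emph{a priori} does not simplify; hypothesis~\eqref{eqn:jFDeltaF} is imposed precisely so that this term collapses to $\wt{\jmath}_s( F^* \Delta_\ini F )$ and~\eqref{eq:XstarX} assumes its clean form. Subsidiary technical points are the strong measurability of $s \mapsto \wt{\jmath}_s( F )^*$ and $s \mapsto \wt{\jmath}_s( F^* \Delta_\ini F )$, needed so that the integrand of~\eqref{eq:XstarX} is itself an integrable process, and the regularity at $t = 0$ used in the converse directions.
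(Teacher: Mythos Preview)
Your proof is correct and follows essentially the same strategy as the paper: existence and strong continuity via Lemma~\ref{lem:timedptqsde}, the norm identity via the quantum It\^o product formula together with hypothesis~\eqref{eqn:jFDeltaF} and complete positivity of the lifting (Lemma~\ref{lem:cplifting}), and necessity by differentiating the resulting integral identity at $t = 0$. The only presentational difference is that the paper works directly with the weak form of the It\^o formula, writing $\| X_t \theta \|^2 - \| \theta \|^2 = \int_0^t \langle \wt{X}_s \wh{\nabla}_s \theta, \wt{\jmath}_s( q( F ) ) \wt{X}_s \wh{\nabla}_s \theta \rangle \std s$ rather than passing through your operator equation~\eqref{eq:XstarX}, which avoids any need to know \emph{a priori} that $X$ is bounded.
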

\begin{proof}
The existence of $X$ is an application of
Lemma~\ref{lem:timedptqsde}. The extra hypotheses on $\opsp$,
$j$ and~$F$, together with the weak form of the quantum It\^o
product formula \cite[Theorem~3.15]{Lin05}, imply that
\[
\| X_t \theta \|^2 - \| \theta \|^2 = %
\int_0^t \langle \wt{X}_s \wh{\nabla}_s \theta, %
\wt{\jmath}_s( F + F^* + F^* \Delta F ) \wt{X}_s %
\wh{\nabla}_s \theta \rangle \std s %
\quad \text{for all } \theta \in \ini \algten \evecs( \tot )
\text{ and } t \in \R_+,
\]
where
$\wh{\nabla}_s u \otimes \nvec{f} := u \otimes \nvec{f} \otimes \wh{f(  s )}$
and this definition is extended by linearity. Sufficiency of the isometry and contractivity conditions follows
immediately; for the latter, recall that matrix-space liftings
preserve completely positivity, by
Lemma~\ref{lem:cplifting}. Necessity follows by differentiating at
$0$: the integrand is continuous at $0$ by
Remark~\ref{rmk:procprops}(ii).
\end{proof}

\begin{remark}\label{rem:mult}
\begin{mylist}
\item[(i)] If $\opsp = \alg$, a unital $C^*$~algebra, and $j_t$ is a
$*$-homomorphism, then (\ref{eqn:jFDeltaF}) holds whenever
$F \in \alg \otimes \bop{\mmul}{}$.
\item[(ii)] Suppose that $I_\ini \in \opsp$ and
$j_t( I_\ini ) = I_{\ini \otimes \fock}$ for all $t \in \R_+$. If
$C \in \bop{\mmul}{}$ and $F = I_\ini \otimes C$ then the conditions of
Proposition~\ref{prp:multiplier} are satisfied and the strongly
continuous operator process $X$ such that
\[
X_t = I_{\ini \otimes \fock} + \int_0^t X_s \algten C \std \Lambda_s
\qquad \text{strongly on } \ini \algten \evecs(\tot) %
\qquad \text{for all } t \in \R_+
\]
is contractive, isometric or co-isometric if and only if
$q( F ) \le 0$, $q( F ) = 0$ or $q( F^* ) = 0$, respectively.
This follows from the quantum It\^o product formula,
Theorem~\ref{thm:qipf}; alternatively, note that this is the usual
Hudson--Parthasarathy QSDE with time-independent coefficients.
\end{mylist}
\end{remark}

\begin{lemma}\label{lem:measurability}
Let $j$ be a completely bounded mapping process on the operator space
$\opsp$, with locally bounded completely bounded norm.
\begin{mylist}
\item[\tu{(i)}] The process $t \mapsto \wt{\jmath}_t( F )$ is
weakly measurable for all $F \in \opsp \matten \bop{\mmul}{}$ if and
only if the process $t \mapsto j_t( x )$ is weakly measurable for all
$x \in \alg$.
\item[\tu{(ii)}] If $t \mapsto j_t( x )$ is strongly measurable
for all $x \in \opsp$ then $t \mapsto \wt{\jmath}_t( F )$ is strongly
measurable for all $F \in \alg \otimes \bop{\mmul}{}$,
the spatial tensor product.
\item[\tu{(iii)}] If $j$ is strongly continuous then
$t \mapsto \wt{\jmath}_t ( F )$ is strongly measurable for all
$F \in \opsp \matten \bop{\mmul}{}$, the matrix-space tensor product.
\end{mylist}
\end{lemma}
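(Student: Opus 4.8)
The plan is to reduce each statement to a measurability claim about the scalar- or vector-valued functions $t \mapsto E^{\nvec{f}} j_t(x) E_{\nvec{g}}$, exploiting the defining property of the matrix-space lifting, namely $E^{\wh{z}} \wt{\jmath}_t(F) E_{\wh{w}} = j_t(E^{\wh{z}} F E_{\wh{w}})$ for $z, w \in \mmul$, together with the density of $\evecs(\tot)$ in $\fock$ and the locally bounded norm hypothesis.

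For part~(i), one direction is trivial: taking $F = x \otimes C$ for a rank-one $C \in \bop{\mmul}{}$ recovers $j_t(x)$ up to a scalar, so weak measurability of $t \mapsto \wt{\jmath}_t(F)$ for all $F$ forces weak measurability of $t \mapsto j_t(x)$. For the converse, I would fix $F \in \opsp \matten \bop{\mmul}{}$ and vectors $\theta = u \otimes \nvec{f} \otimes \wh{z}$, $\theta' = v \otimes \nvec{g} \otimes \wh{w}$ in $\ini \algten \evecs(\tot) \algten \mmul$; then $\langle \theta', \wt{\jmath}_t(F) \theta \rangle = \langle v \otimes \nvec{g}, j_t(E^{\wh{w}} F E_{\wh{z}})(u \otimes \nvec{f}) \rangle$, which is measurable in $t$ by hypothesis applied to $x = E^{\wh{w}} F E_{\wh{z}} \in \opsp$. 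Since vectors of this special product form span a dense subspace of $\ini \otimes \fock \otimes \mmul$ and the operator norms $\|\wt{\jmath}_t(F)\| = \|j_t \matten \id\|_\cb \|F\|$-type bounds are locally bounded (here I use $\|\Phi \matten \id\|_\cb = \|\Phi\|_\cb$ from the matrix-space lifting definition), a standard $\varepsilon/3$ approximation argument upgrades measurability on the dense set to weak measurability on all of $\ini \otimes \fock \otimes \mmul$.

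For parts~(ii) and~(iii) the strategy is the same in outline but the approximation is carried out at the level of $F$ rather than the vectors. For part~(ii), any $F \in \alg \otimes \bop{\mmul}{}$ is a norm-limit of finite sums $\sum_i x_i \otimes C_i$ with $x_i \in \alg$ and $C_i \in \bop{\mmul}{}$; for such a finite sum, $\wt{\jmath}_t(F)\theta = \sum_i (j_t(x_i) \otimes I_\mmul)(\id \otimes C_i)\theta$, which is strongly measurable in $t$ by hypothesis on $j$ and the boundedness of the $C_i$, and the uniform bound $\|\wt{\jmath}_t\|_\cb \le \|j_t\|_\cb$ locally bounded lets us pass to the norm-limit. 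Part~(iii) is the analogue with $\alg \otimes \bop{\mmul}{}$ replaced by $\alg \matten \bop{\mmul}{}$; here one cannot approximate $F$ in norm by elementary tensors, so instead I would use the fact that $j$ strongly continuous already gives $t \mapsto j_t(x)$ strongly continuous, hence $t \mapsto \langle \theta', \wt{\jmath}_t(F)\theta \rangle$ continuous for $\theta, \theta'$ of the special product form above (via the lifting identity and continuity of $j_\bullet$ evaluated at $E^{\wh{w}} F E_{\wh{z}} \in \opsp$), so $t \mapsto \wt{\jmath}_t(F)$ is weakly continuous; combined with separability-free Pettis-type reasoning restricted to the closed separable subspace generated by a single orbit $\{\wt{\jmath}_t(F)\theta : t \ge 0\}$ — or more directly, by noting that a weakly continuous function into a Hilbert space with values in a separable subspace is strongly measurable — one obtains strong measurability of $t \mapsto \wt{\jmath}_t(F)\theta$ for each fixed $\theta$.

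The main obstacle is part~(iii): the matrix space $\alg \matten \bop{\mmul}{}$ is genuinely larger than the spatial tensor product when $\mul$ is infinite-dimensional, so the clean approximation argument of part~(ii) is unavailable, and one must instead argue that weak continuity of $t \mapsto \wt{\jmath}_t(F)$ on a total set of vectors, together with the locally bounded norm, yields weak continuity everywhere and thence strong measurability — the subtlety being to confirm that the relevant orbit lies in a separable subspace of $\ini \otimes \fock \otimes \mmul$ despite no separability assumption on $\ini$ or $\mul$, which follows because for fixed $\theta$ the map $t \mapsto \wt{\jmath}_t(F)\theta$ is weakly continuous on $\R_+$ and hence has separable range by a standard argument.
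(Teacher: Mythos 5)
Your proposal is correct and follows essentially the same route as the paper: parts (i) and (ii) are the routine reductions the paper leaves to the reader, and for part (iii) both arguments establish weak continuity of $t \mapsto \wt{\jmath}_t(F)\zeta$ via the lifting identity, deduce that the orbit lies in a separable subspace by taking the closed span of the values at rational times and using that norm and weak closures coincide for closed subspaces, and conclude with Pettis' theorem. Your observation that weak continuity alone forces a separable range lets you bypass the paper's explicit bookkeeping with an orthonormal basis of $\mmul$ and the countable supports $A_t$, but the underlying skeleton is identical.
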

\begin{proof}
Parts~(i) and~(ii) are easily checked.

For part~(iii), it is enough to show that if
$F \in \opsp \matten \bop{\mmul}{}$,
$u \in \ini$, $f \in \step$ and $z \in \mmul$ then the map
$t \mapsto \wt{\jmath}_t (F) \zeta$ is strongly measurable, where
$\zeta := u \otimes \nvec{f} \otimes z$.

We fix an orthonormal basis
$\{ e_\alpha \}_{\alpha \in A}$ of $\mmul$ and let
$E^\alpha: = E^{e_\alpha}$. Given any
$\xi \in \ini \otimes \fock \otimes \mmul$, we have that
$\| \xi \|^2 = \sum_{\alpha \in A} \| E^\alpha \xi \|^2$ and so
$E^\alpha \xi \neq 0$ for only countably many~$\alpha$. Thus
there is a countable set $A_t \subseteq A$ for each $t \in \R_+$
such that
$E^\alpha \wt{\jmath}_t (F) \zeta = 0$ when $\alpha \notin A_t$.
We let $\hilc_\Q$ be the
separable closed subspace of $\mmul$ with orthonormal basis
$\{e_\alpha : \alpha \in A_\Q \}$, where
$A_\Q := \bigcup\{ A_t : t \in \Q \cap \R_+ \}$.

Next, for each $\alpha \in A_\Q$ we know that
$t \mapsto j_t ( E^\alpha F E_z ) u \otimes \nvec{f}$ is continuous, so
its image is contained in a separable subspace $\hilb_\alpha$
of~$\ini \otimes \fock$. Let $\hilb_\Q$ denote the smallest closed
subspace of~$\ini \otimes \fock$ that contains every $\hilb_\alpha$,
which will also be separable. Then for each $t \in \Q \cap \R_+$ we have that
\[
\wt{\jmath}_t ( F ) \zeta = \sum_{\alpha \in A_t} \bigl[ %
j_t ( E^\alpha F E_z ) u \otimes \nvec{f} \bigr] \otimes3 e_\alpha \in %
\hilb_\Q \otimes \hilc_\Q.
\]
Finally, for each $t \in \R_+ \setminus \Q$, we have that
\[
\langle \nu, \wt{\jmath}_t( F ) \zeta \rangle = %
\lim_{n \to \infty} \langle \nu, \wt{\jmath}_{t_n} ( F ) \zeta \rangle
\]
for any $\nu \in \ini \otimes \fock \otimes \mmul$ and any sequence
$( t_n )_{n \in \N}$ in $\Q \cap \R_+$ that converges to $t$, by the strong, and hence
weak, continuity of $j$. Thus $\wt{\jmath}_t ( F ) \zeta$ belongs to
the weak closure of $\hilb_\Q \otimes \hilc_\Q$, which coincides
with the norm closure (as these closures coincide for convex subsets
of any normed vector space).

This shows that the vector process $( \wt{\jmath}_t ( F ) \zeta)_{t \in \R_+}$
is separably valued. The result now follows from Pettis' Theorem, given that
this process is also weakly measurable as a result of the local boundedness
of the completely bounded norm of $j$ and its strong continuity.
\end{proof}

\begin{proposition}\label{prp:Fdecomp}
Let $\alg$ be a unital $C^*$ algebra with positive cone $\alg_+$ and suppose
\[
F = \begin{bmatrix} k & m \\[1ex]
 l & w - I_{\ini \otimes \mul} \end{bmatrix} \in %
\alg \matten \bop{\mmul}{}.
\]
Then $q( F ) \le 0$ if and only if $w \in \alg \matten \bop{\mul}{}$
is a contraction, $d:= -( k + k^* + l^* l ) \in \alg_+$ and there
exists a contraction $v$ such that
$m = -l^* w - d^{1 / 2} v ( I_{\ini \otimes \mul} - w^* w )^{1 / 2}$.

Furthermore, $q( F ) = 0$ if and only if
$w^* w = I_{\ini \otimes \mul}$, $k + k^* + l^* l = 0$ and
$m = -l^* w$.
\end{proposition}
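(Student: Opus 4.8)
The plan is to compute $q(F)$ explicitly as a $2\times 2$ block matrix over $\alg \matten \bop{\mmul}{}$ and then read off the constraints block by block, the crucial point being that $\Delta_\ini = I_\ini \otimes P_\mul$ kills the $(1,1)$-entry of the conjugation term. Writing $F$ in the given block form, the ampliation $q(F) = F + F^* + F^* \Delta_\ini F$ has blocks indexed by $\mmul = \C \oplus \mul$. Since $\Delta_\ini$ acts as $0$ on the $\C$-component and as the identity on the $\mul$-component, the term $F^* \Delta_\ini F$ only involves the second column of $F$, namely $\binom{m}{w - I}$. A direct computation gives
\[
q( F ) = \begin{bmatrix} k + k^* + l^* l & m + l^* + l^*(w - I) \\[1ex]
 m^* + l + (w^* - I) l & (w - I) + (w^* - I) + (w^* - I)(w - I) \end{bmatrix}
= \begin{bmatrix} k + k^* + l^* l & m + l^* w \\[1ex]
 m^* + w^* l & w^* w - I \end{bmatrix},
\]
where I have used $l^* l + l^*(w-I) = l^* w$ and the telescoping in the $(2,2)$-entry, and $I$ abbreviates $I_{\ini\otimes\mul}$.

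From this explicit form the $q(F) = 0$ claim is immediate: vanishing of $q(F)$ is equivalent to the simultaneous vanishing of the three blocks, i.e. $k + k^* + l^* l = 0$, $w^* w = I_{\ini\otimes\mul}$ and $m + l^* w = 0$, which is exactly the stated condition. (One should note that the $(1,2)$ and $(2,1)$ entries are mutual adjoints, so two scalar-type conditions suffice.) For the inequality $q(F) \le 0$, the strategy is the standard Schur-complement/factorisation argument for positivity of a selfadjoint $2\times 2$ operator matrix. First, $q(F) \le 0$ forces each diagonal block to be $\le 0$: the $(2,2)$-block gives $w^* w \le I_{\ini\otimes\mul}$, i.e. $w$ is a contraction, and the $(1,1)$-block gives $k + k^* + l^* l \le 0$, i.e. $d := -(k + k^* + l^* l) \in \alg_+$ (using that $\alg$ is a $C^*$-subalgebra of $\bop{\ini}{}$ so its positive cone is the intersection of $\alg$ with the positive operators). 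Conversely, with $d \ge 0$ and $w$ a contraction one uses the criterion that
\[
\begin{bmatrix} -d & b \\[1ex] b^* & -(I - w^* w) \end{bmatrix} \le 0
\]
holds if and only if $b = -d^{1/2} v (I - w^* w)^{1/2}$ for some contraction $v$; applying this with $b = m + l^* w$ (so that $b = -d^{1/2} v (I-w^*w)^{1/2}$ is exactly the stated formula for $m$ after rearranging $m = -l^* w - d^{1/2} v (I_{\ini\otimes\mul} - w^* w)^{1/2}$) completes the equivalence.

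The one point needing care — and the main obstacle — is justifying the factorisation lemma for the off-diagonal block in the operator-space setting rather than for operators on a single Hilbert space. The elements $d$, $w$, $m$, $l$ live in matrix spaces $\alg \matten \bop{\mul}{}$, $\alg \matten \ket{\mul}$, etc., but since these are all concretely realised as operators on $\ini \otimes \mmul$ (or compressions thereof), the classical result — a selfadjoint block matrix $\left[\begin{smallmatrix} -d & b \\ b^* & -e \end{smallmatrix}\right]$ with $d, e \ge 0$ is negative if and only if $b$ factors as $-d^{1/2} v\, e^{1/2}$ with $\|v\| \le 1$, proved via polar decomposition and the Douglas range-inclusion lemma — applies at the level of $\bop{\ini\otimes\mmul}{}$. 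What must then be checked is that the resulting contraction $v$ can be taken to lie in the appropriate matrix space $\alg \matten \bop{\mul}{}$; this follows because $v$ is built canonically from $d$, $b$ and $e$ (as a strong limit of polynomials in these and a partial isometry from a polar decomposition that lies in the bicommutant), and matrix spaces are closed under such operations, or alternatively one simply records $v$ as an abstract contraction as in the statement. With that in hand, matching $b = m + l^* w$ and solving for $m$ gives precisely the claimed parametrisation, and the proof is complete.
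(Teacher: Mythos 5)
Your proof is correct and follows essentially the same route as the paper: you compute the block decomposition of $q(F)$, obtaining $\bigl[\begin{smallmatrix} k+k^*+l^*l & m+l^*w \\ m^*+w^*l & w^*w-I\end{smallmatrix}\bigr]$, and then invoke the standard characterisation of positive (here, negative) $2\times2$ operator matrices via the factorisation $b = -d^{1/2}v(I-w^*w)^{1/2}$ with $v$ a contraction, which is exactly the result the paper cites as \cite[Lemma~2.1]{GLSW03}. The only difference is that you spell out the block computation and the Douglas-lemma argument that the paper leaves implicit.
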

\begin{proof}
The first part follows from standard characterisations of positive
$2 \times 2$ operator matrices (see~\cite[Lemma~2.1]{GLSW03}). The
second part is immediate.
\end{proof}

\subsection{Perturbation of the free flow}\label{sec:perturb}

\begin{remark}\label{rmk:notalg}
If $\alg \subseteq \bop{\ini}{}$ is a unital $C^*$ algebra and $\hilb$
is a Hilbert space then, in general, the operator space
$\alg \matten \bop{\hilb}{}$ is not an algebra, although it is always
an operator system in~$\bop{\ini \otimes \hilb}{}$: see
\cite[pp.~615--6]{LiW01}. The reason is already apparent at the level
of row and column spaces, since $\alg \otimes \ket{\hilb}$ and
$\alg \matten \ket{\hilb}$ typically differ, with the former having a
natural Hilbert $C^*$-bimodule structure not shared with the
latter. Moreover, if $\{e_\alpha\}_{\alpha \in A}$ is any orthonormal
basis of $\hilb$ then for each $T \in \alg \otimes \ket{\hilb}$ it is
the case that
\[
T = \sum_{\alpha \in A} E^{e_\alpha} T \otimes \ket{e_\alpha} =
\sum_{\alpha \in A} E_{e_\alpha} E^{e_\alpha} T
\]
with the series being norm convergent, whereas if
$T \in \alg \matten \ket{\hilb}$ then the series above converges to
$T$ in the strong operator topology, but not necessarily in norm.

The following construction provides a means of avoiding some of the
problems caused by this inconvenient difference.
\end{remark}

\begin{definition}
Let $\opsp \subseteq \bop{\ini}{}$ be an operator space, and let
$\hilb$ be a Hilbert space. Let
\begin{align*}
R( \opsp; \hilb ) & := %
\{ T \in \bop{\ini \otimes \hilb}{} : E^z T \in %
\opsp \otimes \bra{\hilb} \text{ for all } z \in \hilb \} \\[1ex]
\text{and} \quad %
C( \opsp; \hilb ) & := %
\{ T \in \bop{\ini \otimes \hilb}{} : T E_w \in %
\opsp \otimes \ket{\hilb} \text{ for all } w \in \hilb \}.
\end{align*}
\end{definition}

\begin{lemma}\label{lem:row+colspaces}
Let $T_R$ and $T_C$ denote the topologies on $\bop{\ini \otimes \hilb}{}$
generated by the families of seminorms $\{ \zp \}_{z \in \hilb}$
and $\{ \pw \}_{w \in \hilb}$, respectively, where
\[
\zp( T ) := \| E^z T \| %
\quad \text{and} \quad %
\pw( T ) := \| T E_w \| \qquad \text{for all } T \in \bop{\ini \otimes \hilb}{}.
\]
Then $R( \opsp; \hilb ) = \overline{\opsp \algten \bop{\hilb}{}}^R$
and $C( \opsp; \hilb ) = \overline{\opsp \algten \bop{\hilb}{}}^C$,
the closures with respect to $T_R$ and $T_C$, respectively. Moreover
\[
\opsp \otimes \bop{\hilb}{} \subseteq R( \opsp; \hilb ) \cap %
C( \opsp; \hilb ) \subseteq R( \opsp; \hilb ) \cup %
C( \opsp; \hilb ) \subseteq \opsp \matten \bop{\hilb}{}.
\]
\end{lemma}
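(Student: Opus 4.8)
The plan is to verify the two claimed topological identities first, and then to deduce the chain of inclusions. For the identity $R(\opsp;\hilb) = \overline{\opsp \algten \bop{\hilb}{}}^R$, I would argue both containments. For "$\supseteq$", note that $\opsp \algten \bop{\hilb}{} \subseteq R(\opsp;\hilb)$ since for a simple tensor $a \otimes S$ with $a \in \opsp$ one has $E^z(a \otimes S) = a \otimes \bra{z}S \in \opsp \otimes \bra{\hilb}$, and this extends linearly; then I would check that $R(\opsp;\hilb)$ is $T_R$-closed, which follows because each seminorm ${}^z p$ is $T_R$-continuous by construction and $\opsp \otimes \bra{\hilb}$ is norm-closed in $\bop{\ini \otimes \hilb}{\ini}$, so the preimage of a closed set under a continuous map is closed. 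For "$\subseteq$", given $T \in R(\opsp;\hilb)$ I would approximate: fix an orthonormal basis $\{e_\alpha\}_{\alpha \in A}$ of $\hilb$ and for each finite $S \finsubset A$ set $P_S := \sum_{\alpha \in S}\dyad{e_\alpha}{e_\alpha}$ and $T_S := (I_\ini \otimes P_S)\,T\,(I_\ini \otimes P_S)$. Each $T_S$ lies in $\opsp \algten \bop{\hilb}{}$ because its range and domain are squeezed into $\ini \otimes \lin\{e_\alpha : \alpha \in S\}$ and its "matrix entries" $E^{e_\alpha}T E_{e_\beta}$ for $\alpha,\beta \in S$ are in $\opsp$ (being compressions of $E^{e_\alpha}T \in \opsp \otimes \bra{\hilb}$). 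The point is then that $T_S \to T$ in $T_R$: for fixed $z \in \hilb$, $E^z T_S = \bra{z}P_S \cdot E^{P_S^\perp\text{-corrected}}\ldots$ — more carefully, $E^z T_S = E^{P_S z}T(I_\ini \otimes P_S)$, and since $E^z T \in \opsp \otimes \bra{\hilb}$, which carries the norm in which $S \mapsto \cdot(I \otimes P_S)$ converges to the identity for elements of a spatial tensor product, one gets ${}^z p(T_S - T) \to 0$ along the net of finite subsets. The $C$-version is entirely symmetric, working with $E_w$ and the column space $\opsp \otimes \ket{\hilb}$.

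For the chain of inclusions, the first inclusion $\opsp \otimes \bop{\hilb}{} \subseteq R(\opsp;\hilb) \cap C(\opsp;\hilb)$ follows because for $T$ in the spatial tensor product and any $z$, $E^z T$ actually lies in $\opsp \otimes \bra{\hilb}$ (not merely its $T_R$-closure) by the analogous norm-convergent slice expansion recalled in Remark~\ref{rmk:notalg}, and symmetrically $TE_w \in \opsp \otimes \ket{\hilb}$; one can also just cite that $\opsp \otimes \bop{\hilb}{}$ is $T_R$-closed being norm-complete, hence contains the $T_R$-closure of nothing larger — but the cleanest route is the direct slice computation. The middle inclusion $R \cap C \subseteq R \cup C$ is trivial set theory. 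The last inclusion $R(\opsp;\hilb) \cup C(\opsp;\hilb) \subseteq \opsp \matten \bop{\hilb}{}$ is where I would spend care: if $T \in R(\opsp;\hilb)$ then for every $z$ we have $E^z T \in \opsp \otimes \bra{\hilb} \subseteq \opsp \matten \bra{\hilb}$, so compressing further, $E^z T E_w = (E^z T)E_w \in \opsp$ for all $w \in \hilb$ by definition of the row space; hence $T$ satisfies the defining condition of $\opsp \matten \bop{\hilb}{}$. The case $T \in C(\opsp;\hilb)$ is symmetric via $E^z T E_w = E^z(T E_w)$ with $T E_w \in \opsp \otimes \ket{\hilb} \subseteq \opsp \matten \ket{\hilb}$.

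The main obstacle I anticipate is the "$\subseteq$" direction of the topological identity, specifically showing the net $(T_S)_{S \finsubset A}$ converges to $T$ in $T_R$ rather than merely $T_S E_w \to T E_w$ strongly. The subtlety is that $E^z T$ lies in the \emph{spatial} tensor product $\opsp \otimes \bra{\hilb}$, and one must use precisely this — that in a spatial (minimal) tensor product $\opsp \otimes \bra{\hilb} = \opsp \otimes \bop{\hilb}{\C}$, compression by $I_\ini \otimes P_S$ on the right converges in norm to the identity on each fixed element — which is the norm-convergence of $\sum_\alpha E_{e_\alpha}E^{e_\alpha}$ highlighted in Remark~\ref{rmk:notalg} applied to the row space rather than the column space. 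Once that norm-convergence is in hand, $E^z(T_S - T) = E^z T(I_\ini \otimes P_S) - E^z T = (E^z T)(I_\ini \otimes P_S - I) \to 0$ in norm, uniformly in nothing but for each fixed $z$, which is exactly $T_R$-convergence; I would remark that one should be slightly careful that $T_S = (I\otimes P_S)T(I\otimes P_S)$ has the left compression too, but $E^z(I \otimes P_S) = E^{P_S z}$ only shifts which slice one reads, and since $E^{P_S z}T \to E^z T$ in $\opsp \otimes \bra{\hilb}$ as well (continuity of $z \mapsto E^z T$ composed with $P_S z \to z$), the two effects combine without trouble. The rest is bookkeeping.
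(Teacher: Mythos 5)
Your proposal is correct and follows essentially the same route as the paper: both directions of the closure identity are handled exactly as there, via the net of two-sided cut-downs $(I_\ini \otimes P_B) T (I_\ini \otimes P_B)$ belonging to $\opsp \algten \bop{\hilb}{}$, with $T_R$-convergence reduced to the norm convergence of the slice expansion in the spatial row space $\opsp \otimes \bra{\hilb}$ (the row-space form of Remark~\ref{rmk:notalg}, which the paper invokes by applying that remark to $(P_C S)^*$), your splitting of the error into a left-compression term bounded by $\| P_S z - z \| \, \| T \|$ and a right-compression term being just a repackaging of the paper's estimate $\zp( P_B S P_B - S ) \le 2 \varepsilon \| S \| + \| z \| \, \| P_{B_0} S P_B - P_{B_0} S \|$. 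The one aside you should discard is the claim that $\opsp \otimes \bop{\hilb}{}$ is ``$T_R$-closed being norm-complete'': norm-closedness does not give closedness in the strictly weaker topology $T_R$ (indeed the $T_R$-closure of $\opsp \algten \bop{\hilb}{}$ is all of $R( \opsp; \hilb )$), but since you explicitly fall back on the direct slice computation for the first inclusion, nothing in your argument depends on it.
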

\begin{proof}
Any $T \in \overline{\opsp \algten \bop{\hilb}{}}^R$ is the limit of a
$T_R$-convergent net $\{ T_i \}_{i \in I} \subseteq \opsp \algten \bop{\hilb}{}$.
We have that
$E^z T_i \in \opsp \algten \bra{\hilb} \subseteq \opsp \otimes \bra{\hilb}$,
with the latter space being norm closed. Thus $T \in R( \opsp; \hilb )$.

Next, let $S \in R( \opsp; \hilb )$. Given an orthonormal basis
$\{e_\alpha\}_{\alpha \in A}$ of $\hilb$, for each finite set
$B \finsubset A$ we let
\[
p_B := \sum_{\beta \in B} \dyad{e_\beta}{e_\beta} %
\quad \text{and} \quad %
P_B := I_\ini \otimes p_B = \sum_{\beta \in B} E_{e_\beta} E^{e_\beta}.
\]
For any finite set $C \finsubset A$, Remark~\ref{rmk:notalg} with $T = ( P_C S )^*$
gives that
\[
\lim_{B \finsubset A} \| P_C S P_B - P_C S \| = 0.
\]
We now consider the net
$\{ P_B S P_B \}_{B \finsubset A} \subseteq \opsp \algten \bop{\hilb}{}$.
Given any $z \in \hilb$ and $\varepsilon > 0$, there is a finite set
$B_0 \finsubset A$ such that $\| z - p_{B_0} z \| < \varepsilon$.
Then $P_B P_{B_0} = P_{B_0}$ for any finite set $B \supseteq B_0$ and so
\[
\zp( P_B S P_B - S ) \le 2 \varepsilon \| S \| + %
\| z \| \, \| P_{B_0} S P_B - P_{B_0} S \|.
\]
Hence $R( \opsp; \hilb ) \subseteq \overline{ \opsp \algten \bop{\hilb}{} }^R$,
as required. That $C( \opsp; \hilb ) = \overline{ \opsp \algten \bop{\hilb}{} }^C$
is proved in the same way. The inclusions are readily verified, in
particular as it is clear that $T_R$ and $T_C$ are weaker than the
norm topology.
\end{proof}

\begin{lemma}\label{lem:liftishom}
Let $\phi: \alg \to \blg$ be a $*$-homomorphism between the
$C^*$~algebras $\alg$ and $\blg$ and suppose  $S$, $T \in \alg \matten \bop{\hilb}{}$,
where $\hilb$ is a Hilbert space. If either $S \in R( \alg; \hilb )$ or $T \in C( \alg; \hilb )$
then $S T \in \alg \matten \bop{\hilb}{}$ and
\[
(\phi \matten \id_{\bop{\hilb}{}})( S ) %
(\phi \matten \id_{\bop{\hilb}{}})( T ) = %
(\phi \matten \id_{\bop{\hilb}{}})( S T ).
\]
\end{lemma}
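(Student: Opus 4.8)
The plan is to reduce the statement to the row/column-space description provided by Lemma~\ref{lem:row+colspaces}, and then to verify both claims (membership in $\alg \matten \bop{\hilb}{}$, and multiplicativity of the lifting) by testing against the vector functionals $E^z(\cdot)E_w$ that characterise elements of matrix spaces. First I would treat the case $T \in C(\alg;\hilb)$; the case $S \in R(\alg;\hilb)$ follows by applying the first case to the adjoints, using that $R(\alg;\hilb) = C(\alg;\hilb)^*$ and that $\phi$, being a $*$-homomorphism, commutes with the involution (and so does its lifting, since $E^z(\Phi\matten\id)(X^*)E_w = \Phi((E^w X E_z)^*) = \Phi(E^w X E_z)^* = (E^z(\Phi\matten\id)(X)E_w)^*$).

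So suppose $T \in C(\alg;\hilb)$. By Lemma~\ref{lem:row+colspaces}, $T$ is the limit, in the topology $T_C$ generated by the seminorms $\pw$, of the net $\{P_B T P_B\}_{B \finsubset A}$ lying in $\alg\algten\bop{\hilb}{}$; equivalently, for the orthonormal basis $\{e_\alpha\}_{\alpha\in A}$ of $\hilb$ we have $T = \sum_\alpha E_{e_\alpha}E^{e_\alpha}T$ with the series converging strongly, and each column $E^{e_\alpha}T$ lies in $\alg\otimes\bra{\hilb}$. The key computation is then
\[
E^z (ST) E_w = \sum_{\alpha\in A} (E^z S E_{e_\alpha})(E^{e_\alpha} T E_w),
\]
where the series converges in norm: indeed $E^z S E_{e_\alpha} = E^z S E_{e_\alpha}$ are the "rows'' of $S$ and $\sum_\alpha \|E^{e_\alpha}T E_w\|^2 = \|T E_w\|^2 < \infty$ while $\sum_\alpha \|E^z S E_{e_\alpha}\|^2 = \|E^z S\|^2 < \infty$, so Cauchy--Schwarz gives absolute convergence. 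Each summand $(E^z S E_{e_\alpha})(E^{e_\alpha} T E_w)$ lies in $\alg$ because $E^{e_\alpha}T \in \alg\otimes\bra{\hilb}$ forces $E^{e_\alpha}T E_w \in \alg$ while $E^z S E_{e_\alpha}\in\alg$ as $S\in\alg\matten\bop{\hilb}{}$; since $\alg$ is norm-closed, $E^z(ST)E_w\in\alg$ for all $z,w$, giving $ST\in\alg\matten\bop{\hilb}{}$.

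For multiplicativity, apply $\phi$ to the norm-convergent series just obtained and use that $\phi$ is bounded and multiplicative:
\[
E^z (\phi\matten\id)(ST) E_w = \phi\bigl(E^z(ST)E_w\bigr)
= \sum_{\alpha\in A} \phi\bigl(E^z S E_{e_\alpha}\bigr)\,\phi\bigl(E^{e_\alpha} T E_w\bigr)
= \sum_{\alpha\in A} \bigl(E^z (\phi\matten\id)(S) E_{e_\alpha}\bigr)\bigl(E^{e_\alpha} (\phi\matten\id)(T) E_w\bigr),
\]
the last equality by the defining property of the matrix-space lifting. The right-hand side is exactly $E^z\bigl((\phi\matten\id)(S)(\phi\matten\id)(T)\bigr)E_w$ by the same "insert a resolution of the identity'' argument applied to $(\phi\matten\id)(S)$ and $(\phi\matten\id)(T)$ — which is legitimate because $(\phi\matten\id)(T)\in\blg\matten\bop{\hilb}{}$ has norm-convergent row/column expansion against a basis, the relevant square-summability being inherited from boundedness of the operators. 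Since $E^z(\cdot)E_w$ separates points of $\blg\matten\bop{\hilb}{}$, the two sides agree as operators.

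The main obstacle is bookkeeping around the convergence of these series: one must be careful that the interchange of $\phi$ with the infinite sum is justified (it is, by norm convergence coming from the Cauchy--Schwarz bound above, \emph{not} merely strong convergence), and that the final reassembly of $\sum_\alpha (E^z A E_{e_\alpha})(E^{e_\alpha} B E_w)$ into $E^z(AB)E_w$ is valid for $A = (\phi\matten\id)(S)$, $B = (\phi\matten\id)(T)$ even though we no longer know $B$ lies in a row or column space — here one uses only that $B \in \bop{\ini\otimes\hilb}{}$, for which $\sum_\alpha E_{e_\alpha}E^{e_\alpha} = I$ strongly suffices to recover $E^z(AB)E_w$ after pairing with vectors. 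Handling the $S\in R(\alg;\hilb)$ case by the adjoint trick requires noting that $R(\alg;\hilb)$ and $C(\alg;\hilb)$ are exchanged under the involution, which is immediate from their definitions since $(\alg\otimes\bra{\hilb})^* = \alg\otimes\ket{\hilb}$ and $(E^z X)^* = X^* E_z$.
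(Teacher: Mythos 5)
Your overall architecture matches the paper's (expand $E^z S T E_w$ through the resolution of identity $\sum_\alpha E_{e_\alpha} E^{e_\alpha}$, push $\phi$ through a norm-convergent series, then reassemble), but the justification of the one step that actually matters --- norm convergence of $\sum_{\alpha} (E^z S E_{e_\alpha})(E^{e_\alpha} T E_w)$ --- is broken. The identities $\sum_{\alpha} \| E^{e_\alpha} T E_w \|^2 = \| T E_w \|^2$ and $\sum_{\alpha} \| E^z S E_{e_\alpha} \|^2 = \| E^z S \|^2$ are false unless $\ini = \C$: for a column operator $L \in \bop{\ini}{\ini \otimes \hilb}$ one has $L^* L = \sum_\alpha (E^{e_\alpha} L)^*(E^{e_\alpha} L)$, and the norm of this sum is not the sum of the norms. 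Indeed $\sum_\alpha \| E^{e_\alpha} L \|^2$ can be infinite even for $L \in \alg \otimes \ket{\hilb}$; with $\ini = \hilb = \ell^2$ and $\alg = \bop{\ini}{}$, the operator $L = \sum_n n^{-1/2} \dyad{e_n}{e_n} \otimes \ket{e_n}$ is a norm limit of finite sums (the tail has norm $(N+1)^{-1/2}$) yet $\sum_n \| E^{e_n} L \|^2 = \sum_n n^{-1} = \infty$. So the Cauchy--Schwarz bound and the claimed absolute convergence evaporate. A telling symptom: your convergence argument uses only $S$, $T \in \alg \matten \bop{\hilb}{}$ and never invokes $T \in C( \alg; \hilb )$ or $S \in R( \alg; \hilb )$; were it valid, the lemma would hold for all $S$, $T$ in the matrix space, i.e.\ $\alg \matten \bop{\hilb}{}$ would be an algebra and the liftings automatically multiplicative --- which is precisely what Remark~\ref{rmk:notalg} says fails.

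The repair is to use the hypothesis where the paper does. For $T \in C( \alg; \hilb )$ the column $T E_w$ lies in $\alg \otimes \ket{\hilb}$, not merely in $\alg \matten \ket{\hilb}$ (note that your parenthetical ``each column $E^{e_\alpha} T$ lies in $\alg \otimes \bra{\hilb}$'' confuses rows with columns and is not what membership of $C( \alg; \hilb )$ provides). By Remark~\ref{rmk:notalg}, $\| ( I - P_B ) T E_w \| \to 0$ along finite subsets $B \finsubset A$, where $P_B = \sum_{\beta \in B} E_{e_\beta} E^{e_\beta}$, whence
\[
\Bigl\| E^z S T E_w - \sum_{\alpha \in B} E^z S E_{e_\alpha} E^{e_\alpha} T E_w \Bigr\| \le \| E^z S \| \, \| ( I - P_B ) T E_w \| \to 0;
\]
this gives unconditional (not absolute) norm convergence, which is all that is needed to conclude membership in the norm-closed algebra $\alg$ and to interchange the norm-continuous $\phi$ with the sum. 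The symmetric estimate with $\| E^z S ( I - P_B ) \|$ handles $S \in R( \alg; \hilb )$ directly, so your adjoint reduction, though correct, is unnecessary. The remainder of your argument --- in particular the reassembly of the lifted series, for which strong convergence of the resolution of the identity suffices to identify the limit --- is sound.
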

\begin{proof}
For any orthonormal basis $\{ e_\alpha \}_{\alpha \in A}$ of $\hilb$
and any $z$, $w \in \hilb$ we have that
\[
E^z S T E_w = \sum_{\alpha \in A} E^z S E_{e_\alpha} E^{e_\alpha} T E_w \in \alg,
\]
as this series is norm convergent by Remark~\ref{rmk:notalg}. Hence
\begin{align*}
E^z ( \phi \matten \id_{\bop{\hilb}{}} ) ( S ) %
(\phi \matten \id_{\bop{\hilb}{}}) ( T ) E_w & = %
\sum_{\alpha \in A} E^z ( \phi \matten \id_{\bop{\hilb}{}} ) ( S ) E_{e_\alpha} %
E^{e_\alpha} ( \phi \matten \id_{\bop{\hilb}{}} ) ( T ) E_w \\[1ex]
 & = \sum_{\alpha \in A} \phi( E^z S E_{e_\alpha} ) \phi( E^{e_\alpha} T E_w ) \\[1ex]
 & = \phi\biggl( \sum_{\alpha \in A} E^z S E_{e_{\alpha}} E^{e_{\alpha}} T E_w \biggr) \\[1ex]
 & = E^z ( \phi \matten \id_{\bop{\hilb}{}} ) ( S T ) E_w,
\end{align*}
using the fact that $\phi$ is homomorphic and norm continuous.
\end{proof}

\begin{remark}
The ideas in the proof above also show that
$(\phi \matten \id_{\bop{\hilb}{}})( R( \alg; \hilb )) \subseteq R(
\blg; \hilb )$ and
$(\phi \matten \id_{\bop{\hilb}{}})( C( \alg; \hilb )) \subseteq C(
\blg; \hilb )$.
\end{remark}

\begin{lemma}\label{lem:prod}
Let $\Phi : \opsp \to \opsq$ be completely bounded, where
the operator space $\opsp \subseteq \bop{\ini_1}{}$ and
the operator space $\opsq \subseteq \bop{\ini_2}{}$.
If $R$, $S \in \bop{\hilb}{}$ for some Hilbert space $\hilb$ then
\[
( I_{\ini_2} \otimes R ) %
\bigl( ( \Phi \matten \id_{\bop{\hilb}{}} )( T ) \bigr) %
( I_{\ini_2} \otimes S ) = %
( \Phi \matten \id_{\bop{\hilb}{}} )\bigl( ( I_{\ini_1} \otimes R ) %
T ( I_{\ini_1} \otimes S ) \bigr)
\]
for all $T \in \opsp \matten \bop{\hilb}{}$.
\end{lemma}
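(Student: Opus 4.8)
The plan is to prove the identity by reducing everything to the ``slice maps'' $E^z(\cdot)E_w$, $z,w\in\hilb$. Recall that an operator $A\in\bop{\ini_2\otimes\hilb}{}$ is determined by the family $\{E^zAE_w : z,w\in\hilb\}$, since $\langle \xi\otimes z, A(\eta\otimes w)\rangle = \langle\xi, (E^zAE_w)\eta\rangle$ and the vectors $\xi\otimes z$ are total in $\ini_2\otimes\hilb$. So it suffices to show that applying $E^z(\cdot)E_w$ to each side of the claimed equation produces the same element of $\bop{\ini_2}{}$ (in fact of $\opsq$), and here the defining property of the matrix-space lifting, namely $E^z(\Phi\matten\id_{\bop{\hilb}{}})(T')E_w = \Phi(E^zT'E_w)$ for $T'\in\opsp\matten\bop{\hilb}{}$, does the work.

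First I would record the two elementary \emph{flip identities} coming straight from $E_z = I_\ini\otimes\ket{z}$ and $E^z = I_\ini\otimes\bra{z}$: for any $R\in\bop{\hilb}{}$ one has $E^z(I_\ini\otimes R) = E^{R^*z}$ and $(I_\ini\otimes R)E_w = E_{Rw}$, on whichever initial space is in play. Applying these on $\ini_1$, for $T\in\opsp\matten\bop{\hilb}{}$ we get
\[
E^z\,(I_{\ini_1}\otimes R)\,T\,(I_{\ini_1}\otimes S)\,E_w = E^{R^*z}\,T\,E_{Sw}\in\opsp,
\]
so $(I_{\ini_1}\otimes R)T(I_{\ini_1}\otimes S)\in\opsp\matten\bop{\hilb}{}$ and the right-hand side of the claimed identity is well defined.

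Then the two sides are handled by parallel one-line computations. On the left, the flip identities on $\ini_2$ together with the defining property of the lifting give
\[
E^z\,(I_{\ini_2}\otimes R)\,(\Phi\matten\id_{\bop{\hilb}{}})(T)\,(I_{\ini_2}\otimes S)\,E_w = E^{R^*z}\,(\Phi\matten\id_{\bop{\hilb}{}})(T)\,E_{Sw} = \Phi\bigl(E^{R^*z}\,T\,E_{Sw}\bigr).
\]
On the right, the defining property followed by the displayed identity above gives
\[
E^z\,(\Phi\matten\id_{\bop{\hilb}{}})\bigl((I_{\ini_1}\otimes R)\,T\,(I_{\ini_1}\otimes S)\bigr)\,E_w = \Phi\bigl(E^z\,(I_{\ini_1}\otimes R)\,T\,(I_{\ini_1}\otimes S)\,E_w\bigr) = \Phi\bigl(E^{R^*z}\,T\,E_{Sw}\bigr).
\]
Since these agree for every $z,w\in\hilb$, the two operators coincide.

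There is no genuine obstacle here: the proof is pure bookkeeping. The only points requiring a little care are getting the adjoint right in $E^z(I_\ini\otimes R) = E^{R^*z}$ and checking that the right-hand side makes sense, i.e.\ that $(I_{\ini_1}\otimes R)T(I_{\ini_1}\otimes S)$ really lies in the domain $\opsp\matten\bop{\hilb}{}$ of the lifting — which is exactly the first displayed computation. I would state the flip identities once at the start and then let the two short computations run side by side.
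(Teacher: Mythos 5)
Your proof is correct and is exactly the argument the paper has in mind: the paper's proof reads simply ``This follows immediately from the definition,'' and your slice-map computation with the flip identities $E^z(I\otimes R)=E^{R^*z}$ and $(I\otimes R)E_w=E_{Rw}$ is precisely what that one-liner compresses. Nothing to add.
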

\begin{proof}
This follows immediately from the definition.
\end{proof}

\begin{theorem}[Quantum It\^o Product Formula]\label{thm:qipf}
Let $X$ and $Y$ be bounded operator processes
and let $F^*$ and $G$ be bounded integrable processes
such that
\[
X^*_t = X^*_0 + \int_0^t F^*_s \std \Lambda_s \quad \text{and} \quad %
Y_t = Y_0 + \int_0^t G_s \std \Lambda_s \qquad \text{for all } %
t \in \R_+.
\]
If
$H = ( F_t \wt{Y}_t + \wt{X}_t G_t + F_t \Delta_{\ini \otimes \fock} G_t )_{t \in \R_+}$
is an integrable process then
\[
X_t Y_t = X_0 Y_0 + \int_0^t H_s \std \Lambda_s %
\qquad \text{for all } t \in \R_+.
\]
\end{theorem}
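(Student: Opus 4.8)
The plan is to reduce the statement to the weak form of the quantum It\^o product formula, \cite[Theorem~3.15]{Lin05} --- the result already applied in the proof of Proposition~\ref{prp:multiplier} --- and then to repackage the resulting family of scalar identities, one for each pair of exponential vectors, as the single operator identity in the conclusion. This last step is available precisely because $H$ is assumed to be an integrable process, so that $\int_0^t H_s \std \Lambda_s$ is defined by Definition~\ref{dfn:integrable}. First I would fix $u$, $v \in \ini$ and $f$, $g \in \step$, put $\theta := u \otimes \nvec{f}$ and $\theta' := v \otimes \nvec{g}$, and recall the notation $\wh{\nabla}_s ( u \otimes \nvec{f} ) := u \otimes \nvec{f} \otimes \wh{f( s )} = E_{\wh{f( s )}} ( u \otimes \nvec{f} )$ from the proof of Proposition~\ref{prp:multiplier}.

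Applying \cite[Theorem~3.15]{Lin05} to the processes $X^*$ and $Y$, which by hypothesis satisfy $X^*_t = X^*_0 + \int_0^t F^*_s \std \Lambda_s$ and $Y_t = Y_0 + \int_0^t G_s \std \Lambda_s$, yields
\[
\langle \theta, ( X_t Y_t - X_0 Y_0 ) \theta' \rangle = \int_0^t \Bigl( \langle F^*_s \wh{\nabla}_s \theta, \wt{Y}_s \wh{\nabla}_s \theta' \rangle + \langle ( \wt{X}_s )^* \wh{\nabla}_s \theta, G_s \wh{\nabla}_s \theta' \rangle + \langle F^*_s \wh{\nabla}_s \theta, \Delta G_s \wh{\nabla}_s \theta' \rangle \Bigr) \std s .
\]
Since $X$ and $Y$ are bounded, I would then move $F^*_s$ and the ampliation $( \wt{X}_s )^*$ across the inner products, using that $( F^*_s )^* = F_s$; the integrand collapses to
\[
\langle \wh{\nabla}_s \theta, ( F_s \wt{Y}_s + \wt{X}_s G_s + F_s \Delta G_s ) \wh{\nabla}_s \theta' \rangle = \langle u \otimes \nvec{f}, E^{\wh{f( s )}} H_s E_{\wh{g( s )}} \, v \otimes \nvec{g} \rangle ,
\]
which is exactly the integrand appearing in the defining formula for $\int_0^t H_s \std \Lambda_s$ in Definition~\ref{dfn:integrable}.

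Since $H$ is integrable, the right-hand side of the first display therefore equals $\langle u \otimes \nvec{f}, \bigl( \int_0^t H_s \std \Lambda_s \bigr) v \otimes \nvec{g} \rangle$. Thus, for each $t \in \R_+$, the operators $X_t Y_t - X_0 Y_0$ and $\int_0^t H_s \std \Lambda_s$ have the same matrix elements against all vectors $u \otimes \nvec{f}$ with $u \in \ini$ and $f \in \step$; as these span the dense subspace $\ini \algten \evecs( \tot )$, the two operators agree there, and since $X_t Y_t - X_0 Y_0$ is bounded this is the claimed identity.

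The steps involving adjoints, ampliations and the intertwining relations $\wt{X}_s E_z = E_z X_s$ and $E^z \wt{X}_s = X_s E^z$ are routine bookkeeping. The point that genuinely requires care is the invocation of \cite[Theorem~3.15]{Lin05}: it must be applied to the \emph{adjoint} process $X^*$, using the quantum stochastic integral representation with integrand $F^*$ supplied by hypothesis --- which is why the theorem is phrased in terms of $F^*$ rather than $F$ --- and one should verify that the hypotheses of that theorem are met by $X^*$ and $Y$, so that each of the three terms on the right-hand side of the first display is integrable in $s$ and the displayed identity holds. The integrability of $H$ is the additional ingredient: not required for the weak identity, but imposed so that $\int_0^t H_s \std \Lambda_s$ is a genuine operator process.
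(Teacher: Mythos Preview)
Your argument is correct and is essentially a spelled-out version of what the paper does: the paper's own proof is a one-line citation of \cite[Corollary~3.16]{Lin05} together with the remark that non-zero initial values have been allowed, and your derivation from the weak product formula \cite[Theorem~3.15]{Lin05} is precisely how that corollary is obtained. The only difference is granularity: you reproduce the passage from the weak identity to the operator identity via the integrability of $H$, whereas the paper simply invokes the packaged result.
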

\begin{proof}
See \cite[Corollary~3.16]{Lin05}, to which we have added the
possibility of having non-zero initial values.
\end{proof}

\begin{remark}
If $s \mapsto H_s$ is only weakly rather than strongly measurable, but
with $s \mapsto \langle \xi, H_s \zeta \rangle$ locally integrable for
suitable choices of $\xi$ and $\zeta$, then the product process $XY$
will only possess a weak integral representation.
\end{remark}

\begin{theorem}\label{thm:perturbint}
Let $j$ be a quantum stochastic flow on the unital $C^*$ algebra
$\alg$, let $\tot$ be an admissible set and let
$\phi : \alg_0 \to \alg \matten \bop{\mmul}{}$ be a linear map such
that the QSDE~(\ref{eqn:flowqsde}) holds strongly on $\ini \algten
\evecs( \tot )$ for all $x \in \alg_0$.

Let $X$ and $Y$ be solutions to the multiplier
equation~(\ref{eqn:multqsde}) with generators~$F$ and~$G$, respectively,
and each with locally bounded norm. Suppose that $F^* \Delta T$,
$T \Delta G$, and~$F^* \Delta T \Delta G$ are elements of
$ \alg \matten \bop{\mmul}{}$ for all~$T \in \im \phi$, with
\begin{subequations}
\begin{align}
\wt{\jmath}_t( F^* \Delta ) \wt{\jmath}_t( T ) & = %
\wt{\jmath}_t( F^* \Delta T ), \label{eqn:cond1a} \\[1ex]
\wt{\jmath}_t( T ) \wt{\jmath}_t( \Delta G ) & = %
\wt{\jmath}_t( T \Delta G ) \label{eqn:cond1b} \\[1ex]
\text{ and } \qquad %
\wt{\jmath}_t( F^* \Delta ) \wt{\jmath}_t( T ) %
\wt{\jmath}_t( \Delta G ) & = %
\wt{\jmath}_t( F^* \Delta T \Delta G ) \qquad %
\text{for all } t \in \R_+. \label{eqn:cond1c}
\end{align}
\end{subequations}
Suppose also that
$F^* \Delta ( x \otimes I_\mmul ) \Delta G \in %
\alg \matten \bop{\mmul}{}$
for all $x \in \alg_0$, with
\begin{equation}\label{eqn:cond2}
\wt{\jmath}_t( F^* \Delta ) %
\wt{\jmath}_t\bigl( ( x \otimes I_\mmul ) \Delta G \bigr) = %
\wt{\jmath}_t( F^* \Delta ( x \otimes I_\mmul ) \Delta G ) %
\qquad \text{for all } t \in \R_+.
\end{equation}
The completely bounded mapping process~$k$ on $\alg$ defined by setting
\[
k_t : \alg \to \bop{\ini \otimes \fock}{}; \ %
x \mapsto X_t^* j_t( x ) Y_t \qquad \text{for all } t \in \R_+
\]
is such that
\begin{equation}\label{eqn:perturbedQSDE}
\rd k_t( x ) = ( \wt{k}_t \comp \psi )( x ) \std \Lambda_t %
\qquad \text{weakly on } \ini \algten \evecs( \tot ) %
\qquad \text{for all } x \in \alg_0,
\end{equation}
where
\begin{align}\label{eqn:psi}
\psi : \alg_0 & \to \alg \matten \bop{\mmul}{}; \nonumber \\[1ex]
x & \mapsto ( I_{\ini \otimes \mmul} + \Delta F )^* \phi( x ) %
( I_{\ini \otimes \mmul} + \Delta G ) + F^* ( x \otimes I_\mmul ) + %
F^* \Delta ( x \otimes I_\mmul ) \Delta G + ( x \otimes I_\mmul ) G.
\end{align}
If $\ini$ and $\mul$ are separable then $k$
satisfies~(\ref{eqn:perturbedQSDE}) strongly.
\end{theorem}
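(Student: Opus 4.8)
The plan is to prove that the mapping process $k$ admits the integral representation $\rd k_t(x) = H_t(x)\std\Lambda_t$ with integrand $H_t(x) := \wt{X}_t^*\,\wt{\jmath}_t(\psi(x))\,\wt{Y}_t$, and then to observe that $H_t(x) = \wt{k}_t(\psi(x))$. The last identity is a routine compression: for any $T \in \alg \matten \bop{\mmul}{}$ and $z$, $w \in \mmul$ one has $E^z \wt{X}_t^*\, \wt{\jmath}_t(T)\, \wt{Y}_t\, E_w = X_t^*\, j_t(E^z T E_w)\, Y_t = k_t(E^z T E_w) = E^z \wt{k}_t(T) E_w$, using $E^z(R \otimes I_\mmul) = R E^z$, the defining property of the matrix-space lifting, and the definition $k_t = X_t^* j_t(\cdot) Y_t$; one also checks $\psi(x) \in \alg \matten \bop{\mmul}{}$, which follows by expanding $(I + \Delta F)^* = I + F^* \Delta$ and noting that each of the seven resulting summands has all its compressions $E^z(\cdot)E_w$ in $\alg$ --- either by the hypothesised memberships or because $\alg \matten \bop{\mmul}{}$ is stable under left and right multiplication by $x \otimes I_\mmul$.

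To obtain the integral representation I would first record the integral forms of the three factors. Since $X$ and $Y$ are the strong solutions of the multiplier equation furnished by Proposition~\ref{prp:multiplier}, $X_t = I + \int_0^t \wt{\jmath}_s(F) \wt{X}_s \std \Lambda_s$ and likewise for $Y$; taking adjoints --- using that adjoints commute with quantum stochastic integration, and that $\wt{\jmath}_s(F)^* = \wt{\jmath}_s(F^*)$ since $j_s$ and hence its lifting is $*$-linear --- gives $X_t^* = I + \int_0^t \wt{X}_s^*\, \wt{\jmath}_s(F^*) \std \Lambda_s$. By hypothesis $j(x)$ is a strong solution of~(\ref{eqn:flowqsde}), so $j_t(x) = x \otimes I + \int_0^t \wt{\jmath}_s(\phi(x)) \std \Lambda_s$. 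Now apply the quantum It\^o product formula (Theorem~\ref{thm:qipf}) twice, say to $X^* \cdot j(x)$ and then to $\bigl(X^* j(x)\bigr) \cdot Y$. Before each application one must verify that the relevant product is an integrable process; this is routine, using the local boundedness of $\|X_t\|$, $\|Y_t\|$, $\|j_t\|_\cb$ and the strong measurability of $t \mapsto \wt{\jmath}_t(\cdot)$ on the whole of $\alg \matten \bop{\mmul}{}$, which holds because the flow $j$ is strongly continuous (Lemma~\ref{lem:measurability}(iii)). In the non-separable case $X^*$ need not be strongly measurable, so there one uses instead the weak form of the product formula (the remark following Theorem~\ref{thm:qipf}); this is exactly the source of the weak-versus-strong dichotomy in the statement.

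The two applications of the It\^o formula express $H_t(x)$ as a sum of seven integrable processes, indexed by the non-empty subsets of $\{X^*,\,j(x),\,Y\}$ that are "differentiated", each of the shape $\wt{X}_t^* \cdot (\text{a product of matrix-space liftings interspersed with the projection } \Delta) \cdot \wt{Y}_t$ --- for instance $\wt{X}_t^*\, \wt{\jmath}_t(F^*) \Delta \wt{\jmath}_t(\phi(x)) \wt{Y}_t$, $\wt{X}_t^*\, \wt{\jmath}_t(F^*) \bigl(j_t(x) \otimes I_\mmul\bigr) \Delta \wt{\jmath}_t(G) \wt{Y}_t$, and so on. The heart of the proof, and the step I expect to be the main obstacle, is to collapse each such product into a single lifting $\wt{\jmath}_t(\,\cdot\,)$ of an element of $\alg \matten \bop{\mmul}{}$: because matrix-space liftings of $*$-homomorphisms are \emph{not} multiplicative, every product of two liftings must be justified by precisely the right tool. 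Lemma~\ref{lem:prod} absorbs the projections, giving $\wt{\jmath}_t(F^*)\Delta = \wt{\jmath}_t(F^*\Delta)$ and $\Delta \wt{\jmath}_t(G) = \wt{\jmath}_t(\Delta G)$; Lemma~\ref{lem:liftishom} handles any product in which one factor lies in a row space $R(\alg;\mmul)$ or column space $C(\alg;\mmul)$ --- this covers the factors $x \otimes I_\mmul$ (which lies in $\alg \otimes \bop{\mmul}{}$) and $F^* \Delta^\perp$ (which lies in $R(\alg;\mmul)$ because $\Delta^\perp = I \otimes \ket{\wh{0}}\bra{\wh{0}}$ has rank-one slot in $\mmul$, so that $E^z F^* \Delta^\perp$ is a single simple tensor for every $z$); and the hypotheses~(\ref{eqn:cond1a})--(\ref{eqn:cond1c}) and~(\ref{eqn:cond2}) are invoked for exactly the remaining combinations $\wt{\jmath}_t(F^*\Delta)\wt{\jmath}_t(T)$, $\wt{\jmath}_t(T)\wt{\jmath}_t(\Delta G)$, $\wt{\jmath}_t(F^*\Delta)\wt{\jmath}_t(T)\wt{\jmath}_t(\Delta G)$ and $\wt{\jmath}_t(F^*\Delta)\wt{\jmath}_t\bigl((x\otimes I_\mmul)\Delta G\bigr)$ with $T \in \im \phi$, none of which the two lemmas can reach. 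One also uses the elementary operator identities $\Delta(x \otimes I_\mmul) = (x \otimes I_\mmul)\Delta = \Delta(x \otimes I_\mmul)\Delta$, $\Delta^\perp \Delta = 0$, and $[\,\Delta,\, j_t(x) \otimes I_\mmul\,] = 0$; in particular, after splitting $F^* = F^*\Delta + F^*\Delta^\perp$, the spurious cross-term $\wt{\jmath}_t(F^*\Delta^\perp)\wt{\jmath}_t\bigl((x\otimes I_\mmul)\Delta G\bigr)$ collapses by Lemma~\ref{lem:liftishom} to $\wt{\jmath}_t\bigl(F^*(x\otimes I_\mmul)\Delta^\perp\Delta G\bigr) = \wt{\jmath}_t(0) = 0$. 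Adding the seven collapsed terms and recognising the result as the block expansion $(I + F^*\Delta)\phi(x)(I + \Delta G) + F^*(x \otimes I_\mmul) + F^*\Delta(x \otimes I_\mmul)\Delta G + (x \otimes I_\mmul)G$ of $\psi(x)$ gives $H_t(x) = \wt{X}_t^*\, \wt{\jmath}_t(\psi(x))\, \wt{Y}_t = \wt{k}_t(\psi(x))$, which is~(\ref{eqn:perturbedQSDE}).

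Finally, when $\ini$ and $\mul$ are separable so is $\ini \otimes \fock \otimes \mmul$, and the process $t \mapsto \wt{k}_t(\psi(x))$, being weakly measurable, is then strongly measurable by Pettis' Theorem (Remark~\ref{rmk:procprops}(iii)); the remaining local integrability and square-integrability requirements of Definition~\ref{dfn:integrable} follow from the local boundedness of $\|k_t\|_\cb \le \|X_t\|\, \|Y_t\|$, so in that case~(\ref{eqn:perturbedQSDE}) holds strongly.
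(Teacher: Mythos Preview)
Your proposal is correct and follows essentially the same strategy as the paper: two applications of the quantum It\^o product formula (Theorem~\ref{thm:qipf}), followed by collapsing the resulting products of matrix-space liftings into a single $\wt{\jmath}_t(\psi(x))$ via Lemmas~\ref{lem:liftishom} and~\ref{lem:prod} together with the hypotheses~(\ref{eqn:cond1a})--(\ref{eqn:cond1c}) and~(\ref{eqn:cond2}).

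The only notable difference is the \emph{order} of the two It\^o applications. The paper first forms $j_t(x)Y_t$ and then multiplies on the left by $X_t^*$; you form $X_t^* j_t(x)$ first and then multiply on the right by $Y_t$. Both orders are valid --- indeed the remark immediately following the proof in the paper points out precisely this symmetry, noting that Lemma~\ref{lem:liftishom} yields the equivalent form $\wt{\jmath}_t(F^*\Delta(x\otimes I_\mmul))\wt{\jmath}_t(\Delta G)$ of hypothesis~(\ref{eqn:cond2}) needed for your ordering. The paper's order has the slight advantage that the first It\^o step is genuinely strong (both $j$ and $Y$ being strongly continuous), with the weak form entering only at the second step through $\wt{X}^*$; in your order the adjoint $\wt{X}^*$ appears already in the first integrand, so both steps are weak from the outset in the non-separable case. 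This is purely cosmetic. Your $\Delta^\perp$ splitting for term~6 is correct but a little more elaborate than necessary: the It\^o correction already places a $\Delta$ between $\wt{\jmath}_t(F^*)$ and $(j_t(x)\otimes I_\mmul)$, and since $\Delta$ commutes with $j_t(x)\otimes I_\mmul$ one can invoke~(\ref{eqn:cond2}) directly without splitting.
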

\begin{proof}
Let $x \in \alg_0$. We may apply Theorem~\ref{thm:qipf} to the
processes $t \mapsto j_t( x )$ and $Y$, noting that the process
$t \mapsto j_t( x )^* = j_t( x^* )$
has a stochastic integral representation since $x^* \in \alg_0$. Thus
\[
j_t( x ) Y_t = x \otimes I_\fock +\int^t_0 H_s \std \Lambda_s,
\]
where
\begin{align*}
H_s & = \wt{\jmath}_s \bigl( \phi( x ) \bigr) \wt{Y}_s %
+ \bigl( j_s( x ) \otimes I_\mmul \bigr) \wt{\jmath}_s( G ) \wt{Y}_s %
+ \wt{\jmath}_s \bigl( \phi( x ) \bigr) \Delta \wt{\jmath}_s( G ) %
\wt{Y}_s \\[1ex]
 & = \wt{\jmath}_s \bigl( \phi( x ) +(x \otimes I_\mmul) G %
+\phi( x ) \Delta G \bigr) \wt{Y}_s,
\end{align*}      
where we have used~(\ref{eqn:cond1b}), Lemma~\ref{lem:liftishom} and
Lemma~\ref{lem:prod} to combine the three terms. This is valid
provided $H$ is an integrable process. To see this, note that
$s \mapsto Y_s$ is continuous in the strong operator topology, with
locally bounded norm, and so the same is true for $s \mapsto \wt{Y}_s$. Also,
the map
$s \mapsto \wt{\jmath} \bigl( A \bigr) \xi$ is
strongly measurable by Lemma~\ref{lem:measurability}, again with
bounded norm, for any~$A \in \alg \matten \bop{\mmul}{}$ and
$\xi \in \ini \otimes \fock \otimes \mmul$.

We next apply Theorem~\ref{thm:qipf} once again, to the processes $X^*_t$
and $j_t( x ) Y_t$, so that
\[
k_t( x ) = X^*_t j_t( x ) Y_t = x \otimes I_\fock
+\int^t_0 L_s \std \Lambda_s
\qquad \text{weakly},
\]
where
\[
L_s := \wt{X}^*_s \wt{\jmath}_s( F^* ) \bigl( j_s( x ) Y_s \otimes %
I_\mmul ) + \wt{X}^*_s \wt{\jmath}_s \bigl( \theta( x ) \bigr) %
\wt{Y}_s +\wt{X}^*_s \wt{\jmath}_s( F^* ) \Delta \wt{\jmath}_s %
\bigl( \theta( x ) \bigr) \wt{Y}_s
\]
and
\[
\theta( x ) := \phi( x ) +( x \otimes I_\mmul ) G +\phi( x ) \Delta G.
\]
Lemmas~\ref{lem:liftishom} and~\ref{lem:prod}, together with
assumptions~(\ref{eqn:cond1a}), (\ref{eqn:cond2})
and~\eqref{eqn:cond1c}, now show that
\[
L_s = \wt{k}_s \bigl( \psi( x ) \bigr),
\]
as required. Although the process $s \mapsto X_s$ is strong operator continuous, this is
not guaranteed for $s \mapsto X^*_s$.
\end{proof}

\begin{remark}
Identity~(\ref{eqn:cond2}) appears to treat $F$ and $G$ in an
asymmetrical fashion. However, Lemma~\ref{lem:liftishom} implies that
\[
\wt{\jmath}_t( F^* \Delta ) %
\wt{\jmath}_t \bigl( ( x \otimes I_\mmul ) \Delta G \bigr) %
= \wt{\jmath}_t \bigl( F^* \Delta ( x \otimes I_\mmul ) ) %
\wt{\jmath}_t ( \Delta G ).
\]
Using the right-hand side, one can carry out the proof above by first
looking at $X^*_t j_t( x )$ and then multiplying on the right by
$Y_t$.
\end{remark}

\begin{remark}
If $\alg$ is a von~Neumann algebra and each $j_t$ is ultraweakly
continuous then identities (\ref{eqn:cond1a}--c) and~(\ref{eqn:cond2})
hold without any need for assumptions on $F$ and $G$; in this case, the
matrix-space lifting $j_t \matten \id_{\bop{\mmul}{}}$ is the same as
the ultraweak tensor product $j_t \uwkten \id_{\bop{\mmul}{}}$.
\end{remark}

\begin{remark}\label{rem:col}
For identities (\ref{eqn:cond1a}--c) and~(\ref{eqn:cond2}) to hold, it
is sufficient that $\Delta F$ and~$\Delta G$ lie
in~$C( \alg; \mmul )$, by Lemma~\ref{lem:liftishom}. If $\mul$ is
finite dimensional then this condition is automatic.
\end{remark}

\begin{remark}\label{rem:apply}
\begin{mylist}
\item[(i)] Since
\[
\alg \to \bop{\ini \otimes \fock}{}; \ x \mapsto %
F^* ( x \otimes I_\mmul ) + %
F^* \Delta ( x \otimes I_\mmul ) \Delta G + ( x \otimes I_\mmul ) G
\]
is a bounded map for all $F$, $G \in \alg \matten \bop{\mmul}{}$, in
order to apply Theorem~\ref{thm:intcocycle} to the process~$k$
produced by Theorem~\ref{thm:perturbint}, it suffices to find, for all
$z, w \in \tot$, a norm-dense subspace $\alg_{z, w} \subseteq \alg_0$
such that the map
\[
\alg_{z, w} \to \alg; \ x \mapsto %
E^{\wh{z}} ( I_{\ini \otimes \mmul} + \Delta F )^* \phi( x ) %
( I_{\ini \otimes \mmul} + \Delta G ) E_{\wh{w}}
\]
is closable and its closure generates a $C_0$~semigroup.
\item[(ii)] Suppose further that
$\phi : \alg_0 \to \alg \matten \bop{\mmul}{}$ has standard form, as
in Definition~\ref{dfn:standard}, and consider the block-matrix
decompositions
\begin{equation}\label{eqn:block}
F = \begin{bmatrix} k_F & m_F \\[1ex]
 l_F & w_F - I_{\ini \otimes \mul} \end{bmatrix} %
\qquad \text{and} \qquad %
G = \begin{bmatrix} k_G & m_G \\[1ex]
 l_G & w_G - I_{\ini \otimes \mul} \end{bmatrix}.
\end{equation}
A short calculation shows that, for any $x \in \alg_0$,
\begin{multline}\label{eqn:genblock}
( I_{\ini \otimes \mmul} + \Delta F )^* \phi( x ) %
( I_{\ini \otimes \mmul} + \Delta G ) \\[1ex]
= \begin{bmatrix} \tau( x ) + l_F^* \delta( x ) + %
\delta^\dagger( x ) l_G & \delta^\dagger( x ) w_G \\[1ex]
w^*_F \delta( x ) & 0 \end{bmatrix} + %
\begin{bmatrix} l_F^* \\[1ex] w_F^* \end{bmatrix} %
\bigl( \pi( x ) - x \otimes I_\mul ) %
\begin{bmatrix} l_G & w_G \end{bmatrix}
\end{multline}
and, with $\psi$ as defined in (\ref{eqn:psi}), the quantity $\psi( x )$
equals
\begin{equation}\label{eqn:psidetailed}
\begin{bmatrix}
 \tinymaths{\tau( x ) + %
l_F^* \delta( x ) + l_F^* \pi( x ) l_G + \delta^\dagger( x ) l_G + %
k_F^* x + x k_G} & %
\tinymaths{\delta^\dagger( x ) w_G + l_F^* \pi( x ) w_G + x m_G} %
\\[1ex]
\tinymaths{w^*_F \delta( x ) + w^*_F \pi( x ) l_G + m_F^* x}
 & \tinymaths{w_F^* \pi( x ) w_G - x \otimes I_\mul}
\end{bmatrix}.
\end{equation}
If $\pi$ extends to a bounded operator from $\alg$ to
$\bop{\ini \otimes \mul}{}$ then, as a function of~$x$, the second
term in (\ref{eqn:genblock}) defines a bounded operator, so to apply
Theorem~\ref{thm:intcocycle} it suffices to find, for all $z$,
$w \in \tot$, a norm-dense subspace $\alg_{z, w}$ of~$\alg_0$ such
that the map
\[
\alg_{z, w} \to \alg; \ x \mapsto \tau( x ) + %
( l_F + w_F E_z )^* \delta( x ) + %
\delta^\dagger( x ) ( l_G + w_G E_w )
\]
is closable with closure that generates a $C_0$~semigroup.
\end{mylist}
\end{remark}

\begin{remark}
If $F = G$ then the map
\[
k_t : \alg \to \bop{\ini \otimes \fock}{}; \ %
a \mapsto X_t^* j_t( a ) X_t
\]
produced by Theorem~\ref{thm:perturbint} is completely positive, for
all~$t \in \R_+$. This map is unital if and only if $X$ is
isometric, and is a $*$-homomorphism if $X$ is co-isometric.

Moreover, if $q( F^* )= 0 = q( F )$, it is not technically difficult
to give a direct algebraic proof that the perturbed generator
$\psi = \left[ \begin{smallmatrix} \tau' & ( \delta' )^\dagger \\[0.5ex]
\delta' & \pi' - \iota \end{smallmatrix} \right]$ defined
by~(\ref{eqn:psidetailed}) is a generator in standard form. The
equality $q( F^* ) = 0$ shows that $\pi'$ is a $*$-homomorphism, that
$\delta'$ is a $\pi'$-derivation and that~$\tau'$ satisfies the
appropriate cohomological identity; $q( F ) = 0$ is only then needed
to show $w_F$ is isometric and hence $\psi( 1_\alg ) = 0$.
\end{remark}

\begin{example}
Continuing the previous Remark, a particular class of generators of interest are those that are
``gauge free'', that is, the zero map appears in the bottom right
corner of the $2 \times 2$ block-matrix decomposition. For the
generator $\phi$ of the free flow, this means that $\pi = \iota$, that is,
$\pi( x ) = x \otimes I_\mul$ for all $x \in \alg_0$. For the perturbed
generator $\psi$ to be gauge free as well thus requires that (dropping the
subscript on $w$)
\begin{equation}\label{eqn:gaugefree}
w^* ( x \otimes I_\mul ) w = x \otimes I_\mul \qquad \text{for each } x \in \alg_0.
\end{equation}
Setting $x = 1_\alg$ shows that $w$ must be isometric, which is one of
the conditions that must be satisfied to have $q( F ) = 0$, a
necessary condition for $X$ to be isometric. Note
that~(\ref{eqn:gaugefree}) holds whenever $w$ is an isometric element
of
$(\alg \otimes I_\mul)' \cap \bigl( \alg \matten \bop{\mul}{} \bigr) = %
Z( \alg ) \matten \bop{\mul}{}$,
where $Z( \alg )$ is the centre of $\alg$. If $w$ is co-isometric as
well, then belonging to $Z( \alg ) \matten \bop{\mul}{}$ is a
necessary and sufficient condition on $w$ to ensure that $\psi$ is
gauge free.
\end{example}

\section{Examples}\label{sec:egs}

\subsection{Weyl perturbations}

Let $j$ be a quantum stochastic flow on the $C^*$~algebra $\alg$ such
that
\begin{equation}\label{eqn:qsde}
j_0( x ) = x \otimes I_\fock \quad \text{and} \quad %
\rd j_t( x ) = \wt{\jmath}_t\bigl( \phi( x ) \bigr) \std \Lambda_t %
\qquad \text{strongly on } \ini \algten \evecs( \tot ) \qquad %
\end{equation}
$\text{for all } x \in \alg_0$, where $\alg_0$ is a norm-dense
$*$-subalgebra of $\alg$.

Let $F = I_\ini \otimes C$ for
\[
C = \begin{bmatrix}
\I h - \hlf \| c \|^2 & \ -\bra{U^* c} \\[1ex]
\ket{c} & U - I_\mul
\end{bmatrix} \in \bop{\mmul}{},
\]
where $h \in \R$, $c \in \mul$ and $U \in \bop{\mul}{}$ is unitary.
By Remark~\ref{rem:mult}(iv) and Proposition~\ref{prp:multiplier}
with multiplier generator $F = I_\ini \otimes C$, there exists a
unitary operator process $X$ such that
\[
X_t = I_{\ini \otimes \fock} + \int_0^t X_s \otimes C \std \Lambda_s %
\qquad \text{for all } t \in \R_+.
\]
Since
\[
\Delta F E_{\wh{z}} = %
I_\ini \otimes \ket{\xi} \in \alg \algten \ket{\mmul} %
\qquad \text{for all } z \in \mul,
\]
where $\xi = ( 0, c + U z - z ) \in \mmul$, it follows that
$\Delta F \in C\bigl( \alg; \mmul \bigr)$. Thus Remark~\ref{rem:col} and
Theorem~\ref{thm:perturbint} give a strongly continuous
$*$-homomorphic mapping process $k$.

If the free-flow generator $\phi : \alg_0 \to \alg \matten \bopp\bigl( \mmul \bigr)$
has standard form, so that
\smash[b]{$\phi = \left[ \begin{smallmatrix} \tau & \ \delta^\dagger \\[0.5ex]
\delta & \ \pi - \iota \end{smallmatrix} \right]$,} then the perturbed generator
$\psi$ of $k$ is such that $\psi( x )$ equals
\[
\begin{bmatrix}
\tau( x ) + E^c \delta( x ) + E^c \pi( x ) E_c + %
\delta^\dagger( x ) E_c - \| c \|^2 x & %
\delta^\dagger( x ) ( I_\ini \otimes U ) + %
E^c \pi( x ) ( I_\ini \otimes U ) - x E^{U^* c} \\[1ex]
( I_\ini \otimes U )^* \delta( x ) + %
( I_\ini \otimes U )^* \pi( x ) E_c - E_{U^* c} x & %
( I_\ini \otimes U )^* \pi( x ) ( I_\ini \otimes U ) - %
x \otimes I_\mul
\end{bmatrix}
\]
for all $x \in \alg_0$. In particular, if $\pi = \iota$, so that
$\phi = \left[ \begin{smallmatrix} \tau & \ \delta^\dagger \\[0.5ex]
\delta & \ 0 \end{smallmatrix} \right]$,
then
$\psi = \left[ \begin{smallmatrix} \tau' & \ ( \delta' )^\dagger \\[0.5ex]
\delta' & \ 0 \end{smallmatrix} \right]$, where
\[
\tau' : \alg_0 \to \alg; \ %
x \mapsto \tau( x ) + E^c \delta( x ) + \delta^\dagger( x ) E_c %
\quad \text{and} \quad %
\delta' : \alg_0 \to \alg \matten \ket{\mul}; \ %
x \mapsto ( I_\ini \otimes U )^* \delta( x ).
\]
In particular, we see that
\[
\psi^{\wh{z}}_{\wh{w}} = \phi^{\wh{c + U z}}_{\wh{c + U w}}
\qquad \text{for all } z, w \in \mul.
\]
If $j$ has a
strongly continuous vacuum-expectation semigroup then
Proposition~\ref{prp:assocsemigen} applies and shows that each
$\phi^{\wh{z}}_{\wh{w}}$ is closable and has an extension that
generates a $C_0$~semigroup. If in fact it is the closure of
$\phi^{\wh{z}}_{\wh{w}}$ that is the semigroup generator, and if
$c+Uz \in \tot$ for all $z \in \tot$, then $k$ is a Markovian cocycle
by Theorem~\ref{thm:intcocycle}. These conditions are satisfied by the
cocycles constructed in Theorems~3.9 and~3.12 of~\cite{BeW14} where
$\tot = \mul$, as shown by a combination of Theorem~3.16 and
Lemma~2.14 of that paper. These results show that $\alg_0$ is a core
for the generators of all of the associated semigroups.

\subsection{The quantum exclusion process}

Let $\ini = \fock_-\bigl( \ell^2( I ) \bigr)$, the Fermionic Fock
space over $\ell^2( I )$, where $I$ is a non-empty set, and let $b_i$
and $b_i^*$ be the annihilation and creation operators at site
$i \in I$, respectively, so that
\[
\{ b_i, b_j \} = 0 \qquad \text{and} %
\qquad \{ b_i, b_j^* \} = \tfn{i = j} %
\qquad \text{for all } i, j \in I.
\]
The \emph{CAR algebra} $\alg$ is the norm closure of $\alg_0$, the
$*$-algebra generated by $\{ b_i : i \in I \}$, in $\bop{\ini}{}$.

The quantum exclusion process was introduced by
Rebolledo~\cite{Reb05}, and the associated process constructed
in~\cite{BeW14} under certain assumptions discussed below. The two
inputs, given the choice of $I$, are functions $\eta: I \to \R$ and
$\alpha: I \times I \to \C$. The former gives the energy $\eta_i$ at each
site~$i \in I$, and $\alpha_{i, j}$ is an amplitude from site $i$ to site~$j$. We set
\[
t^\alpha_{i,j} := \alpha_{i, j} b^*_j b_i  \qquad \text{and} \qquad
\delta^\alpha_{i, j} : \alg \to \alg; \ x \mapsto [ t^\alpha_{i, j}, x ] \qquad
 ( i, j \in I ).
\]
Let $\mul$ have orthonormal basis $\{ f_{i, j}: i, j \in I \}$, so that
$\mul \cong l^2( I \times I )$. We can assemble the set of operators
$\{ t^\alpha_{i, j} : i, j \in I \}$
into a column operator and the set of derivations $\{ \delta^\alpha_{i, j} : i, j \in I \}$ into an
associated $\iota$-derivation:
\begin{align}
&t_\alpha := \sum_{i, j \in I} t^\alpha_{i, j} \otimes \ket{f_{i, j}} \label{eqn:talpha} \\[1ex]
\text{and} \quad & \delta_\alpha: \alg_0 \to \alg \matten \bop{\mul}{}; \ %
x \mapsto \sum_{i, j \in I} \delta^\alpha_{i, j}( x ) \otimes \ket{f_{i, j}} = %
t_\alpha x - ( x \otimes I_\mul ) t_\alpha. \label{eqn:delta alpha}
\end{align}
If the series in~(\ref{eqn:talpha}) converges to give a bounded
operator from $\ini$ to $\ini \otimes \mul$ then
$t_\alpha \in \alg \matten \ket{\mul}$; moreover, $\delta_\alpha$ is a well defined
$\iota$-derivation, and the domain of $\delta_\alpha$ can be extended
to all of $\alg$. However, the standing assumption
in~\cite{BeW14} does not necessarily give such convergence. Instead,
the following \emph{finite-valence} and \emph{symmetry} assumptions
were made:
\begin{equation}\label{eqn:finitevalence}
\{ j \in I: \alpha_{i, j} \neq 0 \} \text{ is finite for all } i \in I
\qquad \text{and} \qquad
| \alpha_{i, j} | = | \alpha_{j, i} | \text{ for all } i, j \in I.
\end{equation}
Under these assumptions it turns out that $\delta_\alpha( x )$ is
well defined for each $x \in \alg_0$ since only finitely many of the
terms in the series~(\ref{eqn:delta alpha}) are non-zero, even if
$t_\alpha$ is not a bounded operator.

The generator of the semingroup is then given by
\begin{align*}
\tau_{\alpha, \eta}: \alg_0 \to \alg_0; \ x & \mapsto %
\I \sum_{i \in I} \eta_i [ b^*_i b_i, x ] - %
\frac{1}{2} \sum_{i, j \in I \times I} \bigl( %
( t^\alpha_{i, j} )^* [ t^\alpha_{i, j}, x ] + %
[ x, ( t^\alpha_{i, j} )^*] t^\alpha_{i, j} \bigr) \\[1ex]
 & = \I [ H, x]  - \hlf t^*_\alpha \delta_\alpha( x ) - \hlf \delta^\dagger_\alpha( x ) t_\alpha,
\end{align*}
where $H := \sum_{i \in I} \eta_i b^*_i b_i$. Again, the series for
$H$ may not converge, but the commutator $[H, x]$ is well defined, as
are the products $t^*_\alpha \delta_\alpha( x )$ and
$\delta^\dagger_\alpha( x ) t_\alpha$, courtesy
of~(\ref{eqn:finitevalence}).

We can now write
\[
\phi = \begin{bmatrix} \tau_{\alpha, \eta} & \delta^\dagger_\alpha \\[1ex]
\delta_\alpha & 0 \end{bmatrix}: \alg_0 \to \alg_0 \algten \bop{\mmul}{}.
\]
This map has standard form, and under a variety of
hypotheses~\cite[Examples~5.11--13]{BeW14} it was shown that there
exists a quantum stochastic flow $j$ with $\phi$ as its generator.

For the remainder of this section we will assume that such a flow $j$
exists, and show how the methods of this paper can allow us to go
beyond the assumptions~(\ref{eqn:finitevalence}).

Let $\beta: I \times I \to \C$, and suppose that $t_\beta$ defined
through~(\ref{eqn:talpha}) is a well defined element of $\alg \matten \ket{\mul}$
such that $t^*_\beta t_\beta \in \alg$. Choose $h \in \alg$ such that $h = h^*$ and set
\[
F := \begin{bmatrix}
 \I h - \hlf t^*_\beta t_\beta & t^*_\beta \\[1ex]
 -t_\beta & 0 \end{bmatrix}.
\]
The strongly continuous operator process $X = X^F$ given by
Proposition~\ref{prp:multiplier} is isometric. Furthermore, since
$\Delta F = \left[\begin{smallmatrix} 0 & 0 \\[0.5ex]
 -t_\beta & 0 \end{smallmatrix} \right] \in C(\alg; \mmul)$,
Remark~\ref{rem:col} makes it clear that Theorem~\ref{thm:perturbint}
may be applied, when $I$ is countable, to obtain a completely positive
unital mapping process $k$ with generator
\begin{equation}\label{eqn:newQEPgen}
\psi = \begin{bmatrix} \tau' & (\delta')^\dagger \\ \delta' & 0 \end{bmatrix},
\end{equation}
where
\begin{align*}
\tau'( x  ) & = \tau_{\alpha, \eta}( x ) - t^*_\beta \delta_\alpha( x ) %
+ t^*_\beta (x \otimes I_\mul) t_\beta - \delta^\dagger_\alpha( x ) t_\beta - %
\hlf \{ t^*_\beta t_\beta, x \} - \I [ h, x ] \\[1ex]
\text{and} \quad \delta'( x ) & = %
\delta_\alpha( x ) - ( x \otimes I_\mul ) t_\beta + t_\beta x.
\end{align*}
for all $x \in \alg_0$. In particular for any choice of $\beta$ we have
$\delta' = \delta_{\alpha + \beta}$. Furthermore, if the series
\begin{equation}\label{eqn:difficultproduct}
t^*_\alpha \delta_\beta( x ) = %
\sum_{i, j \in I} \overline{\alpha_{i, j}} \beta_{i, j} b^*_i b_j [ x, b^*_j b_i ]
\end{equation}
is convergent in the weak operator topology for all $x \in \alg_0$ then the expression
defining $\tau'$ above can also be rigorously manipulated to show that
$\tau' = \tau_{\alpha +\beta, \eta}$. Thus $k$ is a process with a
generator of the same structure as $j$, but the amplitudes have been
changed. This can have two obvious benefits:
\begin{mylist}
\item[(i)] the symmetry condition
$| \alpha_{i, j} +\beta_{i, j} | = | \alpha_{j, i} +\beta_{j, i} |$
need not apply;
\item[(ii)] the finite-valence condition need not apply to $\alpha +\beta$.
\end{mylist}
The cost, currently, for circumventing these restrictions imposed in
our earlier work~\cite{BeW14} is that it is not yet known if the
resulting process $k$ is multiplicative.

As an example of conditions that ensure that the various series above
behave as required, we give one set of sufficient hypotheses.

\begin{theorem}
Let $\eta \in l^\infty( I )$, $\alpha \in l^\infty( I \times I )$ and $\beta \in l^1( I \times I )$, 
where $I$ is a countable set and $\alpha$ satisfies~(\ref{eqn:finitevalence}).
The process $k$ with generator~(\ref{eqn:newQEPgen}) exists and satisfies the
QSDE~(\ref{eqn:flowqsde}) strongly on $\ini \algten \evecs( \mul )$
for all $x \in \alg_0$.
\end{theorem}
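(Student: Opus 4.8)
The plan is to verify, under the stated hypotheses, that the construction sketched above goes through: first produce the multiplier process $X$ with generator $F$; then apply Theorem~\ref{thm:perturbint} with $Y = X$ and $G = F$ to obtain the perturbed process $k_t( x ) = X_t^* j_t( x ) X_t$ together with its generator $\psi$; and finally identify $\psi$ with~(\ref{eqn:newQEPgen}) and, more precisely, with the exclusion generator for the shifted amplitudes $\alpha + \beta$. Throughout we use the standing assumption of this section that the free flow $j$ with generator $\phi = \left[ \begin{smallmatrix} \tau_{\alpha, \eta} & \delta^\dagger_\alpha \\ \delta_\alpha & 0 \end{smallmatrix} \right]$ exists and satisfies~(\ref{eqn:flowqsde}) strongly on $\ini \algten \evecs( \mul )$. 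To construct~$X$, note that each $b_j^* b_i$ is a contraction and $\beta \in \ell^1 \subseteq \ell^2$, so the series $t_\beta = \sum_{i, j} \beta_{i, j}\, b_j^* b_i \otimes \ket{f_{i, j}}$ converges in column-operator norm; hence $t_\beta \in \alg \otimes \ket{\mul}$ and $t_\beta^* t_\beta = \sum_{i, j} | \beta_{i, j} |^2\, b_i^* b_j b_j^* b_i \in \alg$, so, for any self-adjoint $h \in \alg$, the block matrix $F$ is a genuine element of the spatial tensor product $\alg \otimes \bop{\mmul}{}$. Consequently the isometry identity~(\ref{eqn:jFDeltaF}) holds by Remark~\ref{rem:mult}(i), and $t \mapsto \wt{\jmath}_t( F )$ is strongly measurable by Lemma~\ref{lem:measurability}(iii), since $j$ is strongly continuous. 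A short $2 \times 2$ block computation shows that $q( F ) = F + F^* + F^* \Delta F = 0$, so Proposition~\ref{prp:multiplier} provides a unique strong solution $X$ of the multiplier equation~(\ref{eqn:multqsde}) which is strongly continuous and isometric, hence of locally bounded norm.

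Next I would verify the remaining hypotheses of Theorem~\ref{thm:perturbint} with $G = F$ and $Y = X$. The matrix $\Delta F = \left[ \begin{smallmatrix} 0 & 0 \\ -t_\beta & 0 \end{smallmatrix} \right]$ lies in $C( \alg; \mmul )$ because $t_\beta \in \alg \otimes \ket{\mul}$, so Remark~\ref{rem:col}, via Lemma~\ref{lem:liftishom}, supplies both the membership statements $F^* \Delta T,\ T \Delta G,\ F^* \Delta T \Delta G,\ F^* \Delta ( x \otimes I_\mmul ) \Delta G \in \alg \matten \bop{\mmul}{}$ and the identities~(\ref{eqn:cond1a})--(\ref{eqn:cond1c}) and~(\ref{eqn:cond2}). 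Since $I$ is countable, both $\ini = \fock_-( \ell^2( I ) )$ and $\mul \cong \ell^2( I \times I )$ are separable, so Theorem~\ref{thm:perturbint} produces the completely bounded mapping process $k_t( x ) = X_t^* j_t( x ) X_t$ satisfying $\rd k_t( x ) = ( \wt{k}_t \comp \psi )( x ) \std \Lambda_t$ strongly on $\ini \algten \evecs( \mul )$ for all $x \in \alg_0$, with $\psi$ given by~(\ref{eqn:psi}); since $X$ is isometric and $G = F$, the process $k$ is moreover completely positive and unital. Substituting $k_F = k_G = \I h - \hlf t_\beta^* t_\beta$, $m_F = m_G = t_\beta^*$, $l_F = l_G = -t_\beta$, $w_F = w_G = I_{\ini \otimes \mul}$ and $( \tau, \delta, \pi ) = ( \tau_{\alpha, \eta}, \delta_\alpha, \iota )$ into~(\ref{eqn:psidetailed}) then shows that the bottom-right entry of $\psi$ vanishes and that $\psi$ is exactly~(\ref{eqn:newQEPgen}), with the stated expressions for $\tau'$ and~$\delta'$.

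Finally I would record the amplitude-shifted form of $\psi$. The assumption $\beta \in \ell^1$ forces $\delta_\beta( x ) = t_\beta x - ( x \otimes I_\mul ) t_\beta$ to equal the norm-convergent sum $\sum_{i, j} \delta^\beta_{i, j}( x ) \otimes \ket{f_{i, j}}$, so $\delta'( x ) = \delta_\alpha( x ) + \delta_\beta( x ) = \delta_{\alpha + \beta}( x )$ for every $x \in \alg_0$, only finitely many $\alpha$-terms contributing by the finite-valence hypothesis; and when $\tau'$ is expanded, the one genuinely delicate term is the series~(\ref{eqn:difficultproduct}), which for $\alpha \in \ell^\infty$ and $\beta \in \ell^1$ converges absolutely in norm with bound $2 \| \alpha \|_\infty \| \beta \|_1 \| x \|$, giving $\tau' = \tau_{\alpha + \beta, \eta}$ when $h = 0$. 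I expect the main obstacle to be precisely this bookkeeping---establishing that $t_\beta$, $t_\beta^* t_\beta$, the column operator $\delta_\beta$ and the product~(\ref{eqn:difficultproduct}) all converge in the sense required, and tracking which of the $\ell^1$ and $\ell^\infty$ bounds licenses each rearrangement of a sum over $I \times I$---rather than any new conceptual point, since the genuinely analytic work of constructing and perturbing the flow has been absorbed into Theorem~\ref{thm:perturbint}.
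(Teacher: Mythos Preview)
Your proposal is correct and follows essentially the same route as the paper: invoke the standing assumption (equivalently, the cited \cite[Example~5.11]{BeW14}) for the existence of the free flow $j$ under the $\ell^\infty$ hypotheses on~$\eta$ and~$\alpha$, use the $\ell^1$ hypothesis on~$\beta$ to ensure norm convergence of $t_\beta$, $t_\beta^* t_\beta$ and the series~(\ref{eqn:difficultproduct}), and then feed everything into the machinery of Sections~\ref{sec:multeqn}--\ref{sec:perturb}. The paper's proof simply records these two facts and points to those sections; you have spelled out in detail which results (Proposition~\ref{prp:multiplier}, Remark~\ref{rem:mult}(i), Lemma~\ref{lem:measurability}(iii), Remark~\ref{rem:col}, Theorem~\ref{thm:perturbint}) are being invoked and why their hypotheses are met, including the separability of $\ini$ and $\mul$ needed for the strong, rather than merely weak, form of the QSDE.
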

\begin{proof}
Existence of $j$ is dealt with in Example~5.11 of~\cite{BeW14}. Since
$\alpha$ is bounded and $\beta$ is summable, the
series~(\ref{eqn:difficultproduct}) is norm convergent and so the
results of Sections~\ref{sec:multeqn} and~\ref{sec:perturb} apply.
\end{proof}

\begin{remark}
Only very minimal assumptions have been made regarding the graph
with $I$ as the set of vertices and an edge
between $i$ and $j$ whenever $\alpha_{i, j} +\beta_{i, j} \neq 0$. A
more detailed study of this graph would undoubtedly allow
less restrictive assumptions to be imposed on the perturbation
function $\beta$.
\end{remark}

\begin{remark}
An alternative approach to constructing quantum exclusion processes
has been developed in~\cite{LiW21}, based on an analysis of the
associated semigroups. No assumption of symmetry is made on the
amplitudes, but it is assumed that $I = \Z^d$ and that
$\alpha_{i, j} \neq 0$ only for sites $i$ and $j$ within a fixed distance of
each other. As in this paper, it is not yet known if the resulting
cocycle in~\cite{LiW21} is multiplicative.
\end{remark}

\subsection{Flows on universal $C^*$ algebras}

\begin{definition}\label{defn:gendalg}
Let $\alg$ be a unital $C^*$~algebra $\alg$ and let $\{ a_i : i \in I \}$ be a subset of $\alg$.
We let~$\words$ denote the set of all words in
the elements $\{ a_i, a_i^*: i \in I \}$, so that $\alg_0 = \lin \words$ is
the $*$-subalgebra generated by this subset. The set
$\words$ is said to \emph{generate} $\alg$ if $\alg_0$ is norm dense in~$\alg$.

These generators \emph{satisfy the relations} $\{ p_k : k \in K \}$ if
each $p_k$ is a complex polynomial in the non-commuting indeterminates
$\langle X_i, X_i^* : i \in I \rangle$ and the
algebra element $p_k( a_i , a_i^* : i \in I )$ obtained from $p_k$ by
replacing each $X_i$ by $a_i$ and~$X_i^*$ by $a_i^*$ is
equal to~$0$ for all $k \in K$. 

A generator $a_i$ is called \emph{balanced} if in each relation $p_k$
the difference between the number of instances of $a_i$ and the number
of instances of $a^*_i$ in each monomial making up $p_k$ is constant.

Let the unital $C^*$~algebra $\alg$ have generators $\{ a_i : i \in I \}$ that satisfy
the relations $\{ p_k : k \in K \}$. Then $\alg$ is \emph{universal} if, given any unital
$C^*$~algebra~$\blg$ containing a set of elements
$\{ b_i : i \in I \}$ which satisfies the relations
$\{ p_k : k \in K \}$, that is,~$p_k( b_i, b_i^* : i \in I ) = 0$ for
all $k \in K$, there exists a unique $*$-homomorphism
$\pi : \alg \to \blg$ such that~$\pi( a_i ) = b_i$ for all $i \in I$.
\end{definition}

\begin{example}
In~\cite{BeW14} we considered flows on the universal rotation algebra,
which is the universal algebra generated by the unitaries $U$, $V$ and
$Z$ such that
\[
U V = Z V U, \quad U Z = Z U \quad \text{and} \quad V Z = Z V.
\]
If we label the generators as $a_1 = U$, $a_2 = V$ and $a_3 = Z$ then
the full list of relations are
\begin{gather*}
p_1 = X_1 X^*_1 - 1, \quad p_2 = X_1^* X_1 - 1, \quad p_3 = X_2 X^*_2 - 1, %
\quad p_4 = X_2^* X_2 - 1, \quad p_5 = X_3 X^*_3 - 1, \\[1ex]
p_6 = X_3^* X_3 - 1, \quad p_7 = X_1 X_2 -X_3 X_2 X_1, %
\quad p_8 = X_1 X_3 - X_3 X_1, \quad p_9 = X_2 X_3 - X_3 X_2.
\end{gather*}
The relations $p_1$, \ldots, $p_6$ encode the unitarity of the
generators. Note that $U$ and $V$ are both balanced, but $Z$ is not
courtesy of relation $p_7$.
\end{example}

\begin{lemma}\label{lem:derivationsource}
Let $\alg$ be a universal $C^*$~algebra generated by
$\{ a_i : i \in I \}$ and let $\words$ and $\alg_0$ be as in
\tu{Definition~\ref{defn:gendalg}}. Suppose that the generating
element $a_j$ is balanced. There is a $C_0$ group of
automorphisms $\alpha^j$ with generator $\I d_j$ such that
\begin{mylist}
\item[\tu{(i)}] the domain $\dom d_j$ is a $*$-subalgebra containing $\alg_0$,
\item[\tu{(ii)}] the derivation $d_j$ is skew symmetric, that is, $d_j^\dagger = -d_j$, and
\item[\tu{(iii)}] we have $d_j ( b ) = n_j( b ) b$ for each $b \in \words$, where
\[
n_j(b) := \text{number of copies of $a_j$ in $b$} - %
\text{number of copies of $a^*_j$ in $b$}.
\]
\end{mylist}
\end{lemma}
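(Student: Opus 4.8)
The plan is to realise $\alpha^j$ as the \emph{gauge group} that rescales the generator $a_j$ by a phase. Fix $t \in \R$ and consider in $\alg$ the elements $b_i := a_i$ for $i \neq j$ and $b_j := e^{\I t} a_j$. I claim these satisfy the defining relations $\{ p_k : k \in K \}$. Indeed, under the substitution $X_i \mapsto b_i$, $X_i^* \mapsto b_i^*$ each occurrence of $X_j$ in a monomial of $p_k$ contributes a factor $e^{\I t}$ and each occurrence of $X_j^*$ a factor $e^{-\I t}$, so that monomial is multiplied by $e^{\I t m}$, where $m$ is its net $a_j$-degree; since $a_j$ is balanced, $m$ equals the same constant $d_k$ across all monomials of $p_k$, whence $p_k( b_i, b_i^* : i \in I ) = e^{\I t d_k}\, p_k( a_i, a_i^* : i \in I ) = 0$. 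Universality of $\alg$ then furnishes a unique $*$-homomorphism $\alpha^j_t : \alg \to \alg$ with $\alpha^j_t( a_j ) = e^{\I t} a_j$ and $\alpha^j_t( a_i ) = a_i$ for $i \neq j$. Comparing actions on generators and invoking the uniqueness clause of the universal property gives $\alpha^j_0 = \id_\alg$ and $\alpha^j_s \comp \alpha^j_t = \alpha^j_{s + t}$ for all $s$, $t \in \R$; in particular each $\alpha^j_t$ is a $*$-automorphism and $( \alpha^j_t )_{t \in \R}$ is a one-parameter group.

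Next I would establish strong continuity. Each $\alpha^j_t$ is isometric, being an automorphism of a $C^*$~algebra, and for a word $b \in \words$ one has $\alpha^j_t( b ) = e^{\I t n_j( b )} b$, which depends norm-continuously on $t$; hence $t \mapsto \alpha^j_t( x )$ is norm continuous for every $x$ in the dense subalgebra $\alg_0 = \lin \words$, and a routine $\varepsilon/3$-argument using $\sup_t \| \alpha^j_t \| = 1$ extends this to all of $\alg$. Thus $( \alpha^j_t )_{t \in \R}$ is a $C_0$~group, whose closed, densely defined generator I write as $\I d_j$, thereby defining the operator $d_j$. The same computation shows that each $b \in \words$ lies in $\dom d_j$ with $t^{-1}( \alpha^j_t( b ) - b ) \to \I n_j( b ) b$ as $t \to 0$, giving $d_j( b ) = n_j( b ) b$, which is~(iii), and, by linearity, $\alg_0 \subseteq \dom d_j$.

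It remains to verify~(i) and~(ii), which are the standard structural properties of generators of $C_0$~groups of $*$-automorphisms. For~(ii): since $\alpha^j_t$ is $*$-preserving and the involution is isometric, $t^{-1}( \alpha^j_t( x^* ) - x^* ) = \bigl( t^{-1}( \alpha^j_t( x ) - x ) \bigr)^*$ converges precisely when $t^{-1}( \alpha^j_t( x ) - x )$ does, so $\dom d_j$ is $*$-closed and $d_j( x^* ) = -d_j( x )^*$, that is, $d_j^\dagger = -d_j$. For the algebra property in~(i): if $x$, $y \in \dom d_j$, then writing
\[
t^{-1}\bigl( \alpha^j_t( x y ) - x y \bigr) = t^{-1}\bigl( \alpha^j_t( x ) - x \bigr) \alpha^j_t( y ) + x\, t^{-1}\bigl( \alpha^j_t( y ) - y \bigr)
\]
and letting $t \to 0$, using strong continuity of $\alpha^j$ together with the boundedness of the difference quotients, shows $x y \in \dom d_j$ with $d_j( x y ) = d_j( x ) y + x\, d_j( y )$; hence $\dom d_j$ is a subalgebra, and being also $*$-closed it is a $*$-subalgebra, which contains $\alg_0$ by the previous paragraph.

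The argument is essentially routine; the only point requiring care is the verification that the phase rescaling of $a_j$ preserves the relations, which is exactly where the hypothesis that $a_j$ is balanced enters, and the subsequent correct appeal to \emph{both} the existence and the uniqueness parts of the universal property.
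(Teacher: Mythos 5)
Your proposal is correct and follows essentially the same route as the paper: define the gauge maps $\alpha^j_t$ by rescaling $a_j$ to $e^{\I t} a_j$, use the balanced hypothesis to check the relations are preserved, invoke universality (existence and uniqueness) to obtain the automorphism group, deduce strong continuity from the explicit action $\alpha^j_t( b ) = e^{\I t n_j( b )} b$ on words together with density and the uniform norm bound, and then read off the generator properties. The paper merely states the standard structural facts about generators of $C_0$~groups of automorphisms where you spell them out, so your write-up is simply a more detailed version of the same argument.
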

\begin{proof}
For each $t \in \R$, let
\begin{equation}\label{eqn:newgenset}
b_i := \begin{cases} a_i & \text{if } i \neq j, \\ e^{\I t} a_j &
\text{if } i = j. \end{cases}
\end{equation}
Since $a_j$ is balanced it follows that $\{ b_i : i \in I \}$ is a set
of elements in $\alg$ that satisfy the same relations as the original
set of generators, and moreover this new set also generates
$\alg$. Universality implies that is an automorphism $\alpha^j_t$ of
$\alg$ such that $\alpha^j_t ( a_i ) = b_i$ for all~$i$. It is easy to
see that $\alpha^j_t ( b ) = e^{\I t n_j( b )} b$ for all
$b \in \words$ and $t \in \R$. Since $\| \alpha^j_t \| = 1$ for all
$t$, it follows by linearity and continuity that
$( \alpha^j_t )_{t \in \R}$ is a $C_0$ group of automorphisms.

That $\dom d_j$ is a $*$-subalgebra of $\alg$ and $d_j$ is skew symmetric
follows from the fact that $\alpha^j$ is a group of automorphisms. Moreover,
it is immediate that $\words \subseteq \dom d_j$, with $d_j( b ) = n_j( b ) b$ for
each $b \in \words$, and so $\alg_0 \subseteq \dom d_j$.
\end{proof}

\begin{theorem}\label{thm:genfrombalance}
Let $\alg$ be a universal $C^*$~algebra generated by the set
$\{ a_i : i \in I \}$, let $\{ a_j : j \in J \}$ be a subset of balanced generators, where $J \subseteq I$,
and let
$\mul := \ell^2( J )$ with the standard orthonormal basis $\{ e_j : j \in J \}$.
For any choices of constants $\{ c_j \in \C : j \in J \}$, the following defines a generator
$\phi: \alg_0 \to \alg_0 \otimes \bop{\mul}{}$ in standard form
according to~(\ref{eqn:blockgenform})\tu{:}
\begin{equation}\label{eqn:delta+tau}
\delta( x ) = \sum_{j \in J} c_j d_j( x ) \otimes \ket{ e_j }, %
\qquad \tau = -\hlf \sum_{j \in J} | c_j |^2 d_j^2 %
\quad \text{and} \quad \pi = \iota,
\end{equation}
where $d_j$ is as defined in~\tu{Lemma~\ref{lem:derivationsource}}. There is a weakly
multiplicative strong solution $j$ to~(\ref{eqn:flowqsde}) for this
generator $\phi$. If, in addition, the generators are all isometries
then $j$ is $*$-homomorphic.
\end{theorem}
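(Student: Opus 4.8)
The plan is to treat the algebraic claim that $\phi$ has standard form separately from the analytic construction of $j$, and to obtain the $*$-homomorphic case at the end via the universal property.

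\emph{Standard form.} Here I would just run the checks of Definition~\ref{dfn:standard}; they reduce to properties of the derivations $d_j$ supplied by Lemma~\ref{lem:derivationsource}, the situation being that of Example~\ref{ex: gauge free skew derivs} with each $\delta_i$ replaced by $d_j$ and $\delta_i^\dagger = -\delta_i$ by $d_j^\dagger = -d_j$. Since each $d_j$ is a derivation, $\delta$ is an $\iota$-derivation and $d_j^2(xy) = d_j^2(x)y + 2 d_j(x) d_j(y) + x d_j^2(y)$ gives $\tau(xy) - \tau(x)y - x\tau(y) = \delta^\dagger(x)\delta(y)$; skew-symmetry of each $d_j$ yields $\delta^\dagger(x) = \delta(x^*)^*$ and makes $\tau$ a $*$-linear map; and $\pi = \iota$ is trivially a unital $*$-homomorphism with $\iota$ the ampliation. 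The one extra point is that $\phi$ is well defined on $\alg_0 = \lin\words$: because $d_j(b) = n_j(b)b$ and a fixed word involves only finitely many generators, the series defining $\delta(x)$, $\delta^\dagger(x)$ and $\tau(x)$ terminate for $x \in \alg_0$, so $\phi$ maps $\alg_0$ into $\alg_0 \algten \bop{\mmul}{}$, with only finitely many nonzero terms.

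\emph{Construction of $j$.} The concrete object I have in mind is $j_t(b) := b \otimes V_t(b)$ for each word $b$, where $V_t(b)$ is the Weyl cocycle on $\fock$ with velocity vector $\sum_{j\in J} c_j n_j(b) e_j \in \mul$ (a finite sum), extended by linearity to $\alg_0$. Well-definedness of this extension rests on the fact that balancedness of the $a_j$, $j\in J$, makes the grading $b \mapsto (n_j(b))_{j\in J}$ well defined on $\alg_0$ --- equivalently, that the joint spectral subspaces of the commuting $\T^J$-action generated by the groups $\alpha^j$ of Lemma~\ref{lem:derivationsource} are linearly independent. Then adaptedness and strong continuity of $j$ are immediate; the strong form of~(\ref{eqn:flowqsde}) follows from the Hudson--Parthasarathy QSDE satisfied by $V_t(b)$ together with $d_j(b) = n_j(b)b$; and $j_t(bb') = j_t(b)j_t(b')$ on words, so $j$ is multiplicative on $\alg_0$, in particular weakly multiplicative. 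Staying strictly within the machinery of this paper and of~\cite{BeW14} one can instead argue more softly: domain invariance $\phi(\alg_0) \subseteq \alg_0 \algten \bop{\mmul}{}$ holds and $d_j(b) = n_j(b)b$ gives polynomial growth of the iterates of $\phi$ on the span of words of bounded length, whence the existence theorems of~\cite{BeW14} (or of~\cite{Lin05, LiP98}) produce a weakly multiplicative strong solution; and if the associated semigroups $\cP^{z,w}$ turn out to be $C_0$, which is expected from the group structure of the $\alpha^j$, then Theorem~\ref{thm:intcocycle} makes $j$ a Markovian cocycle as well.

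\emph{The $*$-homomorphic case.} Suppose every $a_i$ is an isometry, so that $j_t(a_i) = a_i \otimes U_{i,t}$ for a unitary $U_{i,t}$ on $\fock$. Since $j_t$ is $*$-preserving and multiplicative on $\alg_0$, these operators satisfy every defining relation, $p_k(j_t(a_i), j_t(a_i)^*) = j_t\bigl(p_k(a_i, a_i^*)\bigr) = j_t(0) = 0$, so by universality of $\alg$ there is a $*$-homomorphism $\alg \to \bop{\ini\otimes\fock}{}$ sending $a_i$ to $j_t(a_i)$; by uniqueness it agrees with $j_t$ on $\alg_0$ and is the required bounded $*$-homomorphic extension of $j_t$. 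The main obstacle throughout is the analytic one in the construction step --- controlling the unbounded iterates of $\phi$ well enough to solve~(\ref{eqn:flowqsde}) and to recover multiplicativity --- and the isometry hypothesis is exactly what keeps every word of norm at most one, so that those growth estimates, and with them the passage from a map that is merely multiplicative on the dense subalgebra $\alg_0$ to a bounded $*$-homomorphism of $\alg$, go through.
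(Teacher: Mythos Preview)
Your proof is correct, and your handling of the standard-form verification and the $*$-homomorphic extension via universality are essentially what the paper does (the universality argument appears explicitly in the paper not here but in the treatment of the non-commutative spheres immediately afterwards; for this theorem the paper simply cites \cite[Theorem~3.12]{BeW14}). The genuine difference is in the construction of $j$. The paper's proof is a one-line reduction: it observes that $\phi(a_j) = a_j \otimes T_j$ for an explicit $T_j \in \bop{\mmul}{}$ and $\phi(a_i) = 0$ for $i \in I \setminus J$, and then invokes \cite[Corollary~2.12, Theorem~3.5 and Lemma~3.7]{BeW14}, whose hypotheses are precisely this kind of factorisation on generators. Your second, ``softer'' approach is this in spirit but does not isolate the factorisation $\phi(a_j) = a_j \otimes T_j$, which is the single observation that unlocks the cited machinery. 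Your first approach --- the explicit Weyl-cocycle construction $j_t(b) = b \otimes V_t(b)$, made well defined via the $\T^J$-grading coming from Lemma~\ref{lem:derivationsource} --- is a genuinely different and more concrete route; it buys you an explicit formula and direct multiplicativity on words (the velocities $c_b = \sum_j c_j n_j(b) e_j$ have pairwise real inner products, so the Weyl phase vanishes), at the cost of checking well-definedness on $\alg_0$. The paper does not use this in the proof, but recognises exactly this picture in the subsequent remark on the classical origin of $j$ as a randomised torus action.
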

\begin{proof}
The series for $\delta( x )$ and $\tau( x )$ both
have only finitely many non-zero terms, so are well defined, since
$d_j( a_k ) = \tfn{j = k} \I a_k$. Hence $\phi$ is a generator is
standard form, by Example~\ref{ex: gauge free skew derivs}.

Furthermore, if $j \in J$ then $\phi( a_j ) = a_j \otimes T_j$ for
\[
T_j := -\hlf | c_j |^2 \ket{f_0} \bra{f_0} + %
c_j \ket{f_j} \bra{f_0} - \overline{c_j} \ket{f_0} \bra{f_j} \in %
\bop{\mmul}{},
\]
where $f_0 = ( 1, 0 )$ and $f_j = ( 0, e_j ) \in \mmul$. On the other
hand, if $i \in I \setminus J$ then $\phi( a_i ) = 0$. It follows from
\cite[Corollary~2.12, Theorem~3.5 and Lemma~3.7]{BeW14} that we can
solve~(\ref{eqn:flowqsde}) to find a weakly multiplicative
solution~$j$. If the generators are all isometries
then~\cite[Theorem~3.12]{BeW14} allows us to complete the proof.
\end{proof}

\begin{example}
The non-commutative torus is determined by a choice of
$\lambda \in \T$. Given such~$\lambda$, we define unitary operators $U$ and $V$ on
$\ell^2( \Z^2 )$ by setting
\[
( U u )_{m, n} := u_{m + 1, n} \qquad \text{and} \qquad %
( V u )_{m, n} := \lambda^m u_{m, n + 1} \qquad %
\text{for all } u \in \ell^2( \Z^2 ) \text{ and } m, n \in \Z,
\]
and let $\alg_0$ be the $*$-algebra generated by $U$ and $V$. Note
that $U V = \lambda V U$ and $\alg$, the norm closure of $\alg_0$ in
$\bopp\bigl( \ell^2( \Z^2 ) \bigr)$, is then a concrete realisation of
the non-commutative torus with parameter $\lambda$, which is a
universal $C^*$~algebra. Lemma~\ref{lem:derivationsource} gives
existence of the derivations
\[
d_1 : \alg_0 \to \alg_0; \ U^m V^n \mapsto m U^m V^n %
\qquad \text{and} \qquad %
d_2 : \alg_0 \to \alg_0; \ U^m V^n \mapsto n U^m V^n,
\]
and the free flow generated by $\phi$ defined in
Theorem~\ref{thm:genfrombalance} is the one discussed
in~\cite[Theorem~6.9]{BeW14}.
\end{example}

\begin{example}
For each $N \in \N \cup \{ \infty \}$, the Cuntz algebra $\cuntz_N$ is
the $C^*$~algebra generated by isometries $\{ s_i \}_{i = 1}^N$ that
satisfy the additional relation $\sum_{i=1}^N s_i s_i^* = 1$. For
finite $N$, a concrete realisation can be given as follows: let
$\ini = \ell^2( \Z )$ with standard orthonormal basis
$\{ e_n \}_{n \in \Z}$ and define $s_i$ by continuous linear extension
of the map $s_i: e_n \mapsto e_{n N + i}$. A similar construction is
possible in the case when $N = \infty$. As has been known since its
introduction, all realisations of $\cuntz_N$ are mutually
isomorphic~\cite[Theorem~1.12]{Cun77} and so it is universal. Thus
Theorem~\ref{thm:genfrombalance} applies to give a flow on $\cuntz_N$,
where the dimension of the multiplicity space is $N$.
\end{example}

\begin{example}\label{ex:spheres}
More recent examples of universal $C^*$~algebras are the multitude of
non-commutative spheres studied in~\cite{Ban15}. The following are all
non-commutative examples for which all of the generators
$\{ z_i \}_{i = 1}^N$ are balanced:
\begin{align*}
C( S^{N - 1}_{\C, +} ) & = C^*\Bigl( \{ z_i \}_{i = 1}^N : %
\sum_{i = 1}^N z_i z_i^* = \sum_{i = 1}^N z_i^* z_i = 1 \Bigr); \\[1ex]
C( \overline{S}^{N - 1}_\C ) & = %
C( S^{N - 1}_{\C, +} ) / %
\langle \alpha \beta = -\beta \alpha \text{ for all distinct } a, b \in \{ z_i \}, \ %
\alpha \beta = \beta \alpha \text{ otherwise} \rangle; \\[1ex]
C( S^{N - 1}_{\C, **} ) & = %
C( S^{N - 1}_{\C, +} ) / \bigl\langle %
a b c = c b a \ \text{ for all } a, b, c \in \{ z_i, z_i^* \} \bigr\rangle; \\[1ex]
C( \overline{S}^{N - 1}_{\C, **} ) & = %
C( S^{N - 1}_{\C, +} ) / %
\langle \alpha \beta \gamma = -\gamma \beta \alpha \text{ for all distinct } a, b, c \in \{ z_i \}, \ %
\alpha \beta \gamma  = \gamma \beta \alpha \text{ otherwise} \rangle.
\end{align*}
The first of these is the \emph{complex free sphere}. If we also include the relations
$[ z_i, z_j ] = 0$ for all $i$ and $j$ then we have a commutative $C^*$~algebra
that is isomorphic to the algebra of continuous functions on the complex sphere,
but in the free sphere all commutativity has been eradicated.

The second algebra is the \emph{twisted} version of the sphere,
obtained by taking a quotient of the free sphere. The notation with
$a$ and $\alpha$ means that if $a = z_i$ then
$\alpha$ stands for either of $z_i$ or $z_i^*$. Thus we have the
imposed the following additional relations to those satisfied by the
free sphere: $z_i z_i^* = z_i^* z_i$ and, for $i \neq j$,
$z_i z_j = - z_j z_i$ and $z_i z_j^* = -z_j^* z_i$. The third and
fourth algebras are the \emph{half-liberations} of $S^{N - 1}_\C$ and
$\overline{S}^{N - 1}_\C$, respectively.

In~\cite{Ban15} there are also many real spheres studied, which have
generators $x_1$, \ldots, $x_N$ that are self adjoint and satisfy
$\sum_{i = 1}^N x_i^2 = 1$. The relation $x_i = x_i^*$
ensures that none of the generators are balanced, and so
Lemma~\ref{lem:derivationsource} is not applicable.
\end{example}

\begin{proposition}
For each of the four spheres from \tu{Example~\ref{ex:spheres}} the
gauge-free generator $\phi$ defined by~(\ref{eqn:delta+tau}) is well
defined and the solution $j$ to~(\ref{eqn:flowqsde}) for this
generator is $*$-homomorphic in the strong sense.
\end{proposition}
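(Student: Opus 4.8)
The plan is to get the existence statement directly from Theorem~\ref{thm:genfrombalance}, and to put the real work into upgrading weak multiplicativity to genuine $*$-homomorphy. First: in each of the four cases every generator $z_i$ is balanced (Example~\ref{ex:spheres}), so Lemma~\ref{lem:derivationsource} supplies the skew-symmetric derivations $d_i$. Since each $d_i$ sends a word to a scalar multiple of itself, the sums defining $\delta$ and $\tau$ in~(\ref{eqn:delta+tau}) reduce to finite sums with values in $\alg_0 = \lin\words$, so $\phi$ is a well-defined linear map $\alg_0 \to \alg_0 \algten \bop{\mmul}{}$ of standard form (Example~\ref{ex: gauge free skew derivs}) with $\phi(1_\alg) = 0$. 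Theorem~\ref{thm:genfrombalance} then produces a weakly multiplicative strong solution $j$ of~(\ref{eqn:flowqsde}); being a strong solution it is strongly continuous, and $\phi(1_\alg) = 0$ forces $j_t(1_\alg) = I_{\ini \otimes \fock}$. It remains to prove that each $j_t$ is a $*$-homomorphism, and here the isometry clause of Theorem~\ref{thm:genfrombalance} is no help, since the sphere generators are not isometries.

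The idea for this remaining step is to exploit the relations $\sum_i z_i z_i^* = \sum_i z_i^* z_i = 1_\alg$ to force boundedness of $j_t$ on the generators, and then to bootstrap along word length. Weak multiplicativity of $j$ gives, for all $\xi, \eta \in \ini \algten \evecs(\tot)$,
\[
\langle \xi, j_t(z_i^* z_i)\eta \rangle = \langle j_t(z_i)\xi, j_t(z_i)\eta \rangle \qquad \text{and} \qquad \langle \xi, j_t(z_i z_i^*)\eta \rangle = \langle j_t(z_i^*)\xi, j_t(z_i^*)\eta \rangle ,
\]
so, summing over $i$ and using $j_t(1_\alg) = I$, one obtains $\sum_i \| j_t(z_i)\xi \|^2 = \| \xi \|^2 = \sum_i \| j_t(z_i^*)\xi \|^2$; hence each of $j_t(z_i)$, $j_t(z_i^*)$ extends to a contraction on $\ini \otimes \fock$. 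Since $\phi$ has standard form the solution $j$ is $*$-linear, so these extensions satisfy $j_t(z_i^*) = j_t(z_i)^*$, and the weak-multiplicativity identities $\langle \xi, j_t(z_i y)\eta\rangle = \langle j_t(z_i^*)\xi, j_t(y)\eta\rangle$ (and the analogue with $z_i$ and $z_i^*$ interchanged) become operator identities whenever $j_t(y)$ is bounded. Inducting on the length of a word $b$ then gives boundedness of $j_t(b)$ for every $b \in \words$, hence boundedness of $j_t$ on $\alg_0 = \lin\words$, together with multiplicativity of $j_t$ there; by density each $j_t$ extends to a unital $*$-homomorphism of $\alg$, which with the strong continuity noted above is precisely the assertion that $j$ is $*$-homomorphic in the strong sense.

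The step I expect to be the main obstacle is making this bootstrap watertight: one must pin down the exact sense in which the solution of Theorem~\ref{thm:genfrombalance} (which rests on the construction of \cite{BeW14}) is weakly multiplicative and $*$-linear, and check that each passage from a weak identity on $\ini \algten \evecs(\tot)$ to an operator identity is legitimate at the relevant stage of the induction; the one new input over the isometric-generator case of \cite{BeW14} is the pair of relations $\sum_i z_i z_i^* = \sum_i z_i^* z_i = 1_\alg$, which do the job that $a_i^* a_i = 1_\alg$ does there. As a conceptual cross-check on multiplicativity, note that because $\mul = \ell^2(J)$, and hence $\mmul$, is finite dimensional, $\alg \matten \bop{\mmul}{} = \alg \otimes \bop{\mmul}{}$ and $\wt{\jmath}_t = j_t \matten \id_{\bop{\mmul}{}}$ is a genuine $*$-homomorphism; applying the quantum It\^o product formula (Theorem~\ref{thm:qipf}) to the processes $(j_t(x))_{t \in \R_+}$ and $(j_t(y))_{t \in \R_+}$ and feeding in the gauge-free identity
\[
\phi(x)(y \otimes I_\mmul) + (x \otimes I_\mmul)\phi(y) + \phi(x)\Delta_\ini\phi(y) = \phi(xy) \qquad (x, y \in \alg_0) ,
\]
which is a block-matrix consequence of $\pi = \iota$ and the identities in Definition~\ref{dfn:standard}, exhibits $t \mapsto j_t(x)j_t(y)$ as a solution of~(\ref{eqn:flowqsde}) for $xy$ with the same initial value as $t \mapsto j_t(xy)$, forcing $j_t(x)j_t(y) = j_t(xy)$; this route, however, still presupposes the boundedness of $j_t$ on $\alg_0$ obtained above.
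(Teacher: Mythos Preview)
Your proposal shares the paper's key insight: use the sphere relation $\sum_i z_i^* z_i = 1_\alg$ together with weak multiplicativity and unitality to obtain $\sum_i \| j_t(z_i)\xi \|^2 = \| \xi \|^2$, hence contractivity of each $j_t(z_i)$. Your subsequent word-length induction correctly upgrades weak multiplicativity to genuine operator identities on~$\alg_0$, so that $j_t(b) = j_t(a_1)\cdots j_t(a_n)$ for every word $b = a_1 \cdots a_n$ and in particular each $j_t(b)$ is a bounded operator. This part is sound.

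The gap is in the final sentence: ``by density each $j_t$ extends to a unital $*$-homomorphism of~$\alg$''. Having each $j_t(x)$ bounded for $x \in \alg_0$ and having $j_t|_{\alg_0}$ be a $*$-homomorphism does \emph{not} by itself give a bounded linear map $\alg_0 \to \bop{\ini \otimes \fock}{}$; a $*$-homomorphism defined on a dense $*$-subalgebra of a $C^*$~algebra need not be contractive. (A toy illustration: evaluation at~$2$ is a $*$-homomorphism from polynomials, viewed inside $C[0,1]$, into~$\C$.) What rescues the argument is precisely the universality of the sphere algebras: once the $j_t(z_i)$ are known to be bounded and to satisfy the defining relations, universality supplies a $*$-homomorphism $\pi_t : \alg \to \bop{\ini \otimes \fock}{}$ with $\pi_t(z_i) = j_t(z_i)$, and your induction shows $\pi_t|_{\alg_0} = j_t|_{\alg_0}$. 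This is exactly how the paper closes the argument: it passes to the $C^*$-subalgebra generated by the $j_t(z_i)$ and invokes universality. So your route is essentially the paper's, with the multiplicativity on $\alg_0$ spelled out more explicitly, but you must invoke universality rather than density for the extension step. Your It\^o cross-check is a nice consistency observation but, as you note, it presupposes the boundedness issue already resolved.
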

\begin{proof}
Following the proof of Theorem~\ref{thm:genfrombalance}, we get a
weakly multiplicative, strong solution by applying the results
from~\cite{BeW14}. However, since the generators $z_i$ are not
isometries, we cannot apply \cite[Theorem~3.12]{BeW14} directly. This
problem is overcome by noting that since $j$ is unital and weakly
multiplicative we have that
\[
0 \le %
\sum_{i = 1}^N \langle j_t( z_i ) \xi, j_t( z_i ) \xi \rangle = %
\langle \xi, %
j_t \biggl( \sum_{i = 1}^N z_i^* z_i \biggr) \xi \rangle = %
\| \xi \|^2 \qquad \textrm{for any } t \in \R_+ \text{ and } \xi \in \ini \algten \evecs.
\]
Consequently each map $j_t( z_i )$ is contractive, and we can consider for
fixed $t$ the $C^*$~subalgebra of $\bop{\ini \otimes \fock}{}$
generated by these bounded operators. By universality, this algebra is
isomorphic to the relevant sphere, and so $j_t$ is indeed a
well-defined $*$-homomorphism.
\end{proof}

\begin{remark}
Let $\alg$ be a $C^*$~algebra as in
\tu{Lemma~\ref{lem:derivationsource}}, with generators $a_1$, \ldots,
$a_N$ being balanced, and in addition possibly having other
generators. Let $\alpha^1$, \ldots, $\alpha^N$ be the $C_0$~groups
associated to the given balanced generators. Together these define an
action of $\R^N$ on $\alg$:
\[
\alpha_\bt := %
\alpha^1_{t_1} \comp \cdots \comp \alpha^N_{t_N} \quad %
\text{for } \bt = ( t_1, \ldots, t_N ) \in \R^N,
\]
since
$\alpha^i_s \comp \alpha^j_t = \alpha^j_t \comp \alpha^i_s$ for all
$i$, $j$, $s$ and $t$. Indeed, periodicity of the groups actually
implies that we have an action of the $N$-torus $\T^N$ on $\alg$.

Let $\bw = ( w^1, \ldots, w^N )$ be an $N$-dimensional Wiener process,
and pick $c_1$, \ldots, $c_N \in \C$. We can randomise the group
action to define a mapping process $l$ as follows:
\begin{equation}\label{eqn:randomaction}
l_t( x ) := \alpha_{\bw_t}( x ) = %
\bigl( \alpha^1_{| c_1 | w^1_t} \comp \cdots \comp \alpha^N_{| c_N | w^N_t} \bigr)( x ) %
\qquad \textrm{for any } x \in \alg.
\end{equation}
Furthermore, if $j$ is the flow on $\alg$ from
Theorem~\ref{thm:genfrombalance} then, writing
$c_j = \I |c_j| e^{\I \theta_j}$ for $\theta_j \in [ 0, 2 \pi )$, we
have that
\[
\rd j_t( x ) = j_t\Bigl( -\hlf \sum_{i = 1}^N | c_j |^2 d_j^2( x ) \Bigr) \std t + %
\sum_{j = 1}^N j_t\bigl( \I | c_j | d_j( x ) \bigr) %
\bigl( e^{\I \theta_j} \std( A^j )^*_t + %
e^{-\I \theta_j} \std ( A^j )_t \bigr)
\]
where $x \in \alg_0$ and $A^j$ is the $j$th component of the annihilation operator with
respect to the given basis $\{e_1, \ldots, e_N\}$ of $\mul$. There
is a natural identification of the Fock space $\fock$ with the $L^2$-space
for $\bw$ (see~\cite{Lin05}), and moreover each operator process
$B^j_t := e^{\I \theta_j} ( A^j )^*_t + e^{\I \theta_j} ( A^j )_t$ can
then be viewed as a realisation of the corresponding component of $\bw$. Applying
the usual It\^{o} Lemma to~(\ref{eqn:randomaction}), and using these
identifications, it follows that $l_t( x ) \cong j_t( x )$, so that
the flows arising from Theorem~\ref{thm:genfrombalance} have a
classical origin.
\end{remark}

\begin{example}\label{ex:gaugefreeperturbations}
Let $\phi$ be the gauge-free generator in standard form from
Theorem~\ref{thm:genfrombalance} and let~$j$
be the free flow generated by $\phi$. Suppose
$F = \left[ \begin{smallmatrix} k & -l^* \\[0.5ex] l & 0 \end{smallmatrix}
\right] \in \alg \matten \bop{\mmul}{}$ is a gauge-free multiplier generator,
where $k \in \alg$ and $l \in \alg \matten \bop{\mul}{}$. To satisfy the conditions
$q( F ) = q( F^* ) = 0$, we require that $k + k^* + l^* l = 0$. If
$X$ is the solution to~(\ref{eqn:multqsde}) and
$k_t = X^*_t j_t( \cdot ) X_t$ then $k$ is a weak solution to the
QSDE~(\ref{eqn:perturbedQSDE}) for the generator $\psi$, where
\[
\psi( x ) = \begin{bmatrix}
 \tau( x ) + l^* \delta( x ) + l^* ( x \otimes I_\mul ) l + %
\delta^\dagger( x ) l + k^* x + x k & %
\delta^\dagger( x ) + l^* ( x \otimes I_\mul )  -x l^* \\[1ex]
 \delta( x ) + ( x \otimes I_\mul ) l - l x
 & 0
\end{bmatrix}.
\]
If the set $\{ a_i: i \in I \}$ of generators of $\alg$ is countable
then $\mul$ is separable, and we may assume that $\ini$ is also
separable~\cite[Corollary~3.7.5]{Ped79}. The process $k$ is then a strong
solution to~(\ref{eqn:perturbedQSDE}).

Letting $l_j := \bigl( I_\ini \otimes \bra{e_j} \bigr) l \in \alg$, we have that
\[
\delta( x ) +( x \otimes I_\mul ) l - l x = %
\sum_{j \in J} \bigl( c_j d_j( x ) + d_{l_j}( x ) \bigr),
\]
where
\[
d_r( x ) := [ x, r ] = x r - r x \qquad \text{for any } r \in \alg.
\]
Feynman--Kac perturbation techniques similar to those developed here
were employed in~\cite{CGS03} as a
means of constructing possible Laplacians for the non-commutative
torus, in which the components $c_j d_j + d_{l_j}$ should be
thought of as Dirac operators. More precisely, in~\cite{CGS03} they
studied operators on the Hilbert space $L^2( \alg )$ arising from the
unique trace on the non-commutative torus. To fit into the framework
of non-commutative geometry, the operators on this space arising from
these derivations ought to be self adjoint, and so the component
derivations should be symmetric. This forces the choice of
$c_j = \I \beta_j$ for some $\beta_j \in \R$ when looking at
the generator of the free flow. For the perturbed generator, since
$(d_r)^\dagger = -d_{r^*}$, it is appropriate to make the choice $l_j = \I n_j$,
where $n_j \in \alg$ is self adjoint.

Under the assumption that $c_j = \I \beta_j$, where $\beta_j \in \R$, the unperturbed
semigroup generator
\[
\tau = -\frac{1}{2} \sum_{j \in J} ( \beta_j d_j )^2.
\]
If
\[
\tau'( x ) = %
\tau( x ) + l^* \delta( x ) + l^* ( x \otimes I_\mul ) l + %
\delta^\dagger( x ) l + k^* x + x k \qquad \textrm{and} \qquad
d_j' = \I ( \beta_j d_j + d_{n_j} )
\]
then we can ask if the perturbed semigroup generator satisfies the
analogous equation:
\begin{equation}\label{eqn:Lapsumsquares}
\tau' = -\hlf \sum_{j \in J} ( d_j' )^2 = %
-\hlf \sum_{j \in J} ( \beta_j d_j + d_{n_j} )^2.
\end{equation}
If $J$ is finite then equation~(\ref{eqn:Lapsumsquares}) holds
if each $n_j \in \alg_0$ is self adjoint and we choose
\begin{equation}\label{eqn:correctk}
k = -\hlf %
\sum_{j \in J} \bigl( \beta_j d_j( n_j ) +n_j^2 \bigr).
\end{equation}
The requirement that $q( F ) = 0$ ensures that the value of $k + k^*$ must be
$-l^* l$, and this is consistent with~(\ref{eqn:correctk}). However, we can also see
from~(\ref{eqn:correctk}) that $k - k^* = \I \sum_j \beta_j d_j( n_j )$,
so the imaginary part of $k$ is not completely arbitrary.
\end{example}

\section{References}

\end{document}